\newtheorem{Definition}{Definition}
\newtheorem{Lemma}{Lemma}
\newtheorem{Theorem}{Theorem}
\newtheorem{Proposition}{Proposition}
\theoremstyle{definition}
\newtheorem*{Ack}{Acknowledgement}
\DeclareMathOperator{\Ham}{Ham}
\DeclareMathOperator{\Inn}{Inn}
\DeclareMathOperator{\Aut}{Aut}
\DeclareMathOperator{\End}{End}
\DeclareMathOperator{\Hom}{Hom}
\DeclareMathOperator{\geo}{geo}
\DeclareMathOperator{\alg}{alg}
\DeclareMathOperator{\nor}{nor}
\DeclareMathOperator{\can}{can}
\DeclareMathOperator{\id}{id}
\DeclareMathOperator{\pr}{pr}
\title{\bf Moduli of coisotropic sections and the BFV-complex}
\author[F. Sch\"atz]{Florian Sch\"atz}
\address{Institut f\"ur Mathematik, Universit\"at Z\"urich--Irchel, Winterthurerstrasse 190, CH-8057 Z\"urich, Switzerland}
\email{florian.schaetz@math.uzh.ch}
\thanks{The author acknowledges partial support by the research grant of the University of Z\"urich,
by the SNF-grant 200020-121640/1,
by the European Union through the FP6 Marie Curie RTN ENIGMA (contract number MRTN-CT-2004-5652), 
and by the European Science Foundation through the MISGAM program.}
\begin{document}

\maketitle 

\begin{abstract}
We consider the local deformation problem of coisotropic submanifolds inside Poisson manifolds.
To this end the groupoid of coisotropic sections (with respect to some tubular neighbourhood) is introduced.
Although the geometric content of this groupoid is evident, it is usually a very intricate object.

We provide a description of the groupoid of coisotropic sections in terms of a differential graded Poisson
algebra, called the BFV-complex. This description is achived by
constructing a groupoid from the BFV-complex and a surjective morphism from this groupoid to the groupoid of coisotropic sections.
The kernel of this morphism can be easily chracterized.

As a corollary we obtain an isomorphism between the moduli space of coisotropic sections
and the moduli space of geometric Maurer--Cartan elements of the BFV-complex.
\end{abstract}

\tableofcontents

\begin{Ack}
First of all I thank Alberto Cattaneo for his enduring support and inspiration. I thank James Stasheff
for valuable discussions and insightful comments on a draft of this paper.
Moreover I thank Peter Michor and Alan Weinstein for
helpful communciation. Last but not least
I am in debt of Thomas Preu for countless discussions on various matters.
\end{Ack}

\section{Introduction}\label{s:intro}
It is well-known that the nearby deformations of a Lagrangian submanifold $L$ inside a symplectic manifold $(M,\omega)$
are controlled by the first de~Rham cohomology $H^{1}(L,\mathbb{R})$ of $L$. Due to
the Darboux--Weinstein theorem (\cite{WeinsteinD}) one can replace $L\hookrightarrow (M,\omega)$ by
$L\hookrightarrow (T^{*}L,\omega_{\can})$ where $\omega_{\can}$ is the standard symplectic structure on $T^{*}L$. 
Graphs of sections of $T^{*}L \to L$
are Lagrangian if and only if they are closed with respect to the de~Rham differential. Moreover
it is possible to prove that two such sections can be connected by a one-parameter family of Hamiltonian diffeomorphisms
if and only if their cohomology classes coincide.

One can try to generalize this situation in two ways. On the one hand one
can try to incorporate effects of global nature. This
is usually done by ``counting'' suitable pseudoholomorphic objects. This idea goes back to Floer (\cite{Floer})
and was developed to a full-fledged theory in recent years -- see \cite{FOOO} for instance.

On the other hand one can try to understand the local deformation problem of more general objects than Lagrangian submanifolds.
A natural class of submanifolds containing Lagrangian ones is given by coisotropic submanifolds. The notion
of coisotropic submanifolds can be easily extended to Poisson geometry and it constitutes a very interesting class
of subobjects there, see \cite{Weinstein}.

The study of the nearby deformations of coisotropic submanifolds inside symplectic manifolds was started by Zambon.
In \cite{Zambon} it was shown that the space of nearby deformations of a given coisotropic submanifold does not always
carry the structure of a (infinite-dimensional) manifold. In \cite{OhPark} this result was explained in terms of
a structure called ``strong homotopy Lie algebroid''. This notion refers to an enrichment of the Lie algebroid complex associated
to a coisotropic submanifold. It was proved that nearby deformations of coisotropic submanifolds are in one-to-one
correspondence to solutions of a certain equation, called the Maurer--Cartan equation,
which is naturally associated to the strong homotopy Lie algebroid.

As remarked in \cite{Schaetz} this correspondence fails for Poisson manifolds. Moreover,
it was shown that another structure which is tightly related to the strong homotopy Lie algebroid
can be used to restore the correspondence between nearby deformations and solutions of the Maurer--Cartan equation (up
to certain equivalences). This other structure is known as the ``BFV-complex'', originally interoduce in
\cite{BatalinFradkin} and \cite{BatalinVilkovisky} respectively.

Here we incorporate the fact that certain deformations of a coisotropic submanifold
should be considered equivalent. Roughly speaking, two deformations are equivalent whenever
they can be connected by a smooth one-parameter family of Hamiltonian diffeomorphisms. We are interested in the set of
nearby deformations of a fixed coisotropic submanifold modulo equivalences.
It turns out that this set can be realized as the set of isomorphism classes of objects in a certain groupoid $\mathcal{C}(E,\Pi)$.

On the other hand we provide a certain set of Maurer--Cartan elements of the BFV-complex -- called geometric Maurer--Cartan
elements -- that can be equipped with the structure of a groupoid which we denote by $\mathcal{D}_{\geo}(E,\Pi)$.
Furthermore there is a surjective morphism of groupoids from $\mathcal{D}_{\geo}(E,\Pi)$ to $\mathcal{C}(E,\Pi)$.
The kernel of this morphism can be easily characterized and consequently we obtain
a description of $\mathcal{C}(E,\Pi)$ in terms of the BFV-complex 
(Theorem \ref{thm:main} Subsection \ref{ss:isomorphism}).
This also yields an isomorphism
between the set of deformations of a fixed coisotropic submanifold up to equivalence and the set
of certain Maurer--Cartan elements up to an adapted version of gauge-equivalence (Theorem \ref{thm:equivalenceclasses} Subsection \ref{ss:isomorphism_moduli}).

Let us brifely summerize the structure of this paper:

In Section \ref{s:moduli_geometric} the appropriate equivalence relation on the set of coisotropic submanifolds
is introduced. Then the set of equivalence classes with respect to this equivalence relation
is realized as the orbit set of a small groupoid.

In Section \ref{s:moduli_algebraic} we review the construction of the BFV-complex.
Two special classes of Maurer--Cartan elements -- the ``normalized'' and the ``geometric'' ones -- are investigated.
The set of geometric Maurer--Cartan elements is equipped with a groupoid structure.
This groupoid comes along with a full normal
subgroupoid and the BFV-groupoid is defined to be the corresponding quotient groupoid.

Section \ref{s:isomorphism} provides the link between the BFV-complex and geometry: Theorem \ref{thm:main}
in Subsection \ref{ss:isomorphism} asserts that the groupoid associated to the nearby deformations of a coisotropic submanifold
is isomorphic to the BFV-groupoid. In particular their orbit spaces are isomorphic
and hence the moduli space of nearby deformations of coisotropic submanifolds is isomorphic to the moduli space
of geometric Maurer--Cartan elements (Theorem \ref{thm:equivalenceclasses}
in Subsection \ref{ss:isomorphism_moduli}).

We hope to apply the theoretical developments presented here in a more concrete fashion in the future. In particular
it would be interesting to see whether it is possible to 
derive stability conditions for a given coisotropic submanifold in terms of the BFV-complex.

\section{The moduli space of coisotropic sections}\label{s:moduli_geometric}
We briefly review basic definitions of Poisson geometry and coisotropic submanifolds thereof in Subsection \ref{ss:coisotropicsections}.
Moreover coisotropic vector bundles and coisotropic sections are introduced. In Subsection \ref{ss:Hamiltonianhomotopies}
a certain equivalence relation $\sim_{H}$ on the set of coisotropic sections is defined. The set of equivalence classes with
respect to $\sim_{H}$ is a good
candidate for the moduli space of coisotropic sections of a coisotropic vector bundle.
In Subsection \ref{ss:groupoidofcoisotropicsections} a groupoid which provides a refinement of $\sim_{H}$ is constructed. Finally
a short summary of results in relation to this groupoid is given. Here we mostly follow \cite{WeinsteinD}, \cite{Zambon}, \cite{OhPark},
\cite{CattaneoFelder} and \cite{Schaetz}.

\subsection{Coisotropic sections}\label{ss:coisotropicsections}

Let $M$ be a smooth finite dimensional manifold equipped with a {\em Poisson bivector field} $\Pi$, i.e.
a section of $\wedge^{2}TM$ that satisfies $[\Pi,\Pi]_{SN}=0$. Here $[\cdot,\cdot]_{SN}$ denotes
the Schouten--Nijenhuis bracket on $\Gamma(\wedge TM)$. Instead of $\Pi$ one can consider the associated {\em Poisson bracket}
$\{\cdot,\cdot\}_{\Pi}$ which is a biderivation of the algebra of smooth functions $\mathcal{C}^{\infty}(M)$ on $M$. 
It is given
by
\begin{align*}
\{f,g\}_{\Pi}:=<\Pi,df\wedge dg>
\end{align*}
where $<\cdot,\cdot>$ is the natural contraction between $\wedge TM$ and $\wedge T^{*}M$.
The condition $[\Pi,\Pi]_{SN}=0$ is equivalent to $(\mathcal{C}^{\infty}(M),\{\cdot,\cdot\}_{\Pi})$ 
being a Lie algebra.

Given a Poisson bivector field $\Pi$, one defines a bundle map via
\begin{align*}
\Pi^{\#}:T^{*}M \to TM, \quad \alpha \mapsto <\Pi,\alpha>.
\end{align*}

Let $S$ be a submanifold of $M$.
The {\em conormal bundle} $N^{*}S$ of $S$ in $M$ is defined via the following short exact sequence
of vector bundles over $S$:

$$
\xymatrix{
0 \ar[r] & N^{*}S \ar[r] & T^{*}_{S}M \ar[r] & T^{*}S \ar[r] & 0}.
$$

\begin{Definition}\label{coisotropic_submanifold}
A submanifold $S$ of a smooth finite dimensional Poisson manifold $(M,\Pi)$
is called {\em coisotropic} if the restriction of $\Pi^{\#}$ to $N^{*}S$
has image in $TS$.
\end{Definition}

An equivalent definition can be given in terms of the {\em vanishing ideal} of $S$ in $M$ defined by
\begin{align*}
\mathcal{I}_S:=\{f\in \mathcal{C}^{\infty}(M): f|_{S}\equiv 0\}.
\end{align*}
The multiplicative ideal $\mathcal{I}_S$ is called a {\em coisotrope} in $\mathcal{C}^{\infty}(M)$ if it is a Lie subalgebra
of $(\mathcal{C}^{\infty}(M),\{\cdot,\cdot\}_{\Pi})$. Given a submanifold $S$,
it is straightforward to check that $\mathcal{I}_{S}$ is a coisotrope
if and only if $S$ is a coisotropic submanifold of $(M,\Pi)$, see \cite{Weinstein} for details.

The {\em normal bundle} $NS$ of $S$ in $M$ is defined via the following short exact sequence
of vector bundles over $S$:

$$
\xymatrix{
0 \ar[r] & TS \ar[r] & T_{S}M \ar[r] & NS \ar[r] & 0.}
$$

It is well-known that an embedding of $NS$ into $M$ exists such that its restriction to $S$ coincides with the identity.
Using such an embedding, $NS$ inherits a Poisson bivector field from $M$. Since we are interested in the local
properties near $S$ only, we will fix such an embedding once and for all. Consequently our main object of study is

\begin{Definition}\label{def:coisotropicvectorbundle}
A {\em coisotropic vector bundle} is a pair $(E,\Pi)$ such that
\begin{itemize}
\item[(a)] $E\xrightarrow{p} S$ is a finite rank vector bundle over a smooth finite dimensional manifold $S$,
\item[(b)] $\Pi$ is a Poisson bivector field on $E$,
\item[(c)] $S$ embedded into $E$ as the zero section is a coisotropic submanifold of $(E,\Pi)$.
\end{itemize}
\end{Definition}

Next we define 

\begin{Definition}\label{def:coisotropicsections}
Let $(E,\Pi)$ be a coisotropic vector bundle. A section $\mu$ of $E$ is {\em coisotropic} if
its graph is a coisotropic submanifold of $(E,\Pi)$.
We denote the set of all coisotropic sections of $(E,\Pi)$ by $\mathcal{C}(E,\Pi)$.
\end{Definition}

\subsection{Hamiltonian homotopies}\label{ss:Hamiltonianhomotopies}

Every coisotropic vector bundle $(E,\Pi)$ (in fact every Poisson manifold) comes along with a natural group of inner automorphisms,
the group of {\em Hamiltonian diffeomorphisms}. These are diffeomorphisms generated by a 
time-dependent family of {\em Hamiltonian vector fields},
i.e. vector fields of the form $X_{f}:=\Pi^{\#}(df)$ for some smooth function $f$ on $E$. To be more precise a diffeomorphism
$\phi$ of $E$ is called {\em Hamiltonian} if
\begin{itemize}
\item[(a)] there is a smooth map $\hat{\phi}: E\times [0,1] \to E$ such that its restriction $\phi_t$ to
$E\times \{t\}\hookrightarrow E\times [0,1]$ is a diffeomorphism for arbitrary $t\in [0,1]$ and $\phi_0=\id$, $\phi_1=\phi$,
\item[(b)] there is a smooth function $F: E\times [0,1] \to \mathbb{R}$ such that for all $x\in E$ and all $s\in [0,1]$ the equation
\begin{align*}
\frac{d}{dt}|_{t=s}\phi_{t}(x)=X_{F_s}|_{\phi_s(x)}
\end{align*}
holds.
\end{itemize}
A {\em smooth one-parameter family of Hamiltonian diffeomorphisms} is a smooth map $\hat{\phi}: E\times [0,1] \to E$ satifying properties (a) and
(b) from above except that $\phi_1$ is not fixed. We denote the restriction of $\hat{\phi}$ to $E\times \{t\}\cong E$ by $\phi_t$
as above.

The set of Hamiltonian diffeomorphisms $\Ham(E,\Pi)$ forms a subgroup of the set of all {\em Poisson diffeomorphisms}, i.e.
all diffeomorphisms $\psi$ of $E$ such that
\begin{align*}
\{\psi^{*}(\cdot),\psi^{*}(\cdot)\}_{\Pi}=\psi^{*}(\{\cdot,\cdot\}_{\Pi})
\end{align*}
is satisfied.
This implies in particular that Hamiltonian diffeomorphisms map coisotropic submanifolds to coisotropic submanifolds.

Denote the set of smooth one-parameter families of Hamiltonian diffeomorphisms of $(E,\Pi)$ by $\underline{\Ham}(E,\Pi)$.
This set comes along with a natural group structure given by
\begin{align*}
(\hat{\phi},\hat{\psi})\mapsto (\phi_t\cdot \psi_t)_{t\in [0,1]}.
\end{align*}
Furthermore any element of $\underline{\Ham}(E,\Pi)$ maps coisotropic submanifolds to one-parameter families of
coisotropic submanifolds.
To describe the action of $\underline{\Ham}(E,\Pi)$ on the the set of coisotropic sections $\mathcal{C}(E,\Pi)$
we introduce the concept of {\em Hamiltonian homotopies}:

\begin{Definition}\label{def:Hamiltonianhomotopies}
Let $(E,\Pi)$ be a coisotropic vector bundle and $\mu_{0}$ and $\mu_{1}$ two coisotropic sections of $(E,\Pi)$.
A {\em Hamiltonian homotopy} from $\mu_0$ to $\mu_1$ is a pair $(\hat{\mu},\hat{\phi})$ where
\begin{itemize}
\item[(a)] $\hat{\mu}$ is a section of the pull back bundle of $E$ along $S\times [0,1]\to S$ and
\item[(b)] $\hat{\phi}$ is a smooth one-parameter family of Hamiltonian diffeomorphisms of $(E,\Pi)$
\end{itemize}
such that
\begin{itemize}
\item[(a')] the restriction of $\hat{\mu}$ to $S\times \{0\}$ is $\mu_0$ and the restriction to $S\times \{1\}$ is $\mu_1$ and
\item[(b')] for all $t\in [0,1]$ the image of the graph of $\mu_0$ under $\phi_t$ is equal to the 
graph of the restriction of $\hat{\mu}$ to $S\times \{t\}$.
\end{itemize}
\end{Definition}

\begin{Lemma}\label{lemma:Hamiltonianhomotopy}
The relation on $\mathcal{C}(E,\Pi)$ given by
\begin{align*}
\left(\mu \sim_H \nu\right) :\Leftrightarrow \quad \text{there is a Hamiltonian homotopy from } \mu \text{ to } \nu
\end{align*}
is an equivalence relation. 
\end{Lemma}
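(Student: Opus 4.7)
The plan is to verify the three defining properties of an equivalence relation in turn, with the main technical ingredient being a standard smooth reparametrization trick at the endpoints of $[0,1]$.

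\textbf{Reflexivity.} For any coisotropic section $\mu$, I take $\hat{\mu}$ to be the constant section $(s,t) \mapsto \mu(s)$ and $\hat{\phi}$ to be the constant family $\phi_t = \id$, which is generated by the zero Hamiltonian $F \equiv 0$. Both conditions (a'), (b') of Definition \ref{def:Hamiltonianhomotopies} are immediate.

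\textbf{Symmetry.} Given a Hamiltonian homotopy $(\hat{\mu},\hat{\phi})$ from $\mu_0$ to $\mu_1$, I would define $\hat{\mu}'_t := \hat{\mu}_{1-t}$ and $\phi'_t := \phi_{1-t} \circ \phi_1^{-1}$. One checks that $\phi'_0 = \id$, $\phi'_1 = \phi_1^{-1}$, and that $\phi'_t(\mathrm{graph}(\mu_1)) = \phi_{1-t}(\mathrm{graph}(\mu_0)) = \mathrm{graph}(\hat{\mu}'_t)$. The fact that $\hat{\phi}'$ is again a smooth one-parameter family of Hamiltonian diffeomorphisms follows from computing its generator: if $\hat{\phi}$ is generated by $F_t$, then time-reversal together with right translation by the (Hamiltonian) diffeomorphism $\phi_1^{-1}$ yields the generator $G_t := -F_{1-t} \circ (\phi_{1-t} \circ \phi_1^{-1})^{-1} \cdot \ldots$; more cleanly, one verifies directly from $\frac{d}{dt}\phi'_t = -X_{F_{1-t}} \circ \phi_{1-t} \circ \phi_1^{-1}$ that $\phi'$ satisfies condition (b) with Hamiltonian $G_t := -F_{1-t} \circ \phi_1 \circ (\phi'_t)^{-1}$.

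\textbf{Transitivity.} Given Hamiltonian homotopies $(\hat{\mu},\hat{\phi})$ from $\mu_0$ to $\mu_1$ and $(\hat{\nu},\hat{\psi})$ from $\mu_1$ to $\mu_2$, the natural idea is to concatenate: set
\begin{align*}
\hat{\rho}_t &:= \begin{cases} \hat{\mu}_{2t} & t \in [0, 1/2] \\ \hat{\nu}_{2t-1} & t \in [1/2, 1] \end{cases},
&
\chi_t &:= \begin{cases} \phi_{2t} & t \in [0, 1/2] \\ \psi_{2t-1} \circ \phi_1 & t \in [1/2, 1] \end{cases}.
\end{align*}
The pieces match at $t=1/2$ but the resulting family is generally only continuous, not smooth, there. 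This is the main obstacle. The standard resolution is to fix once and for all a smooth non-decreasing function $\tau\colon[0,1]\to[0,1]$ with $\tau(0)=0$, $\tau(1)=1$, and all derivatives of $\tau$ vanishing at $0$ and $1$ (a bump-style reparametrization). I replace $\hat{\mu}$ and $\hat{\phi}$ with $\hat{\mu} \circ \tau$ and $\hat{\phi} \circ \tau$, and similarly for $\hat{\nu}, \hat{\psi}$; these are still Hamiltonian homotopies (with Hamiltonians $\tau'(t) F_{\tau(t)}$, which vanish together with all derivatives at the endpoints). Concatenating these reparametrized versions then yields a genuinely smooth $\hat{\rho}$ and smooth one-parameter family $\chi$, with $\chi$ generated by a smooth time-dependent Hamiltonian that equals zero in a neighborhood of $t=1/2$. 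Condition (b') is preserved because $\chi_t(\mathrm{graph}(\mu_0))$ equals $\mathrm{graph}(\hat{\mu}_{\tau(2t)})$ for $t \le 1/2$ and $\psi_{\tau(2t-1)}(\mathrm{graph}(\mu_1)) = \mathrm{graph}(\hat{\nu}_{\tau(2t-1)})$ for $t \ge 1/2$.

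Putting the three steps together gives the equivalence relation claim. The only nontrivial step is the smooth gluing in transitivity, and it is handled purely by the reparametrization $\tau$; everything else is a direct verification from Definitions \ref{def:Hamiltonianhomotopies} and the group structure on $\underline{\Ham}(E,\Pi)$.
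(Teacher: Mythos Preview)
Your proof is correct and follows essentially the same route as the paper: the same identity and inverse for reflexivity and symmetry, and the same reparametrize-then-concatenate idea for transitivity. Two small remarks. First, in the symmetry step your explicit formula for the generator is off: from $\frac{d}{dt}\phi'_t=-X_{F_{1-t}}|_{\phi'_t}$ one reads off $G_t=-F_{1-t}$, not $-F_{1-t}\circ\phi_1\circ(\phi'_t)^{-1}$. Second, the paper packages the transitivity construction slightly differently, using a single ``gluing function'' $\rho:[0,1]\to[0,1]$ that is constant on a middle interval, and records the resulting concatenation as an operation $\Box_\rho$; this is not mathematically different from your endpoint-flat $\tau$, but the named operation $\Box_\rho$ is reused throughout the rest of the paper (e.g.\ in constructing the groupoid composition), so it is worth adopting that notation.
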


\begin{proof}
{\em Reflexivity:} Given $\mu$ an arbitrary coisotropic section of $(E,\Pi)$, the pair 
\begin{align*}
\left((\mu)_{t\in [0,1]},(\id_{E})_{t\in [0,1]}\right)
\end{align*}
is a Hamiltonian homotopy from $\mu$ to $\mu$. We denote this Hamiltonian homotopy by $\id_{\mu}$.

{\em Symmetry:} Given a Hamiltonian homotopy $(\hat{\mu},\hat{\phi})$ from $\mu_0$ to $\mu_1$, the pair
\begin{align*}
\left(\mu_{(1-t)}, (\phi_{(1-t)})\circ \phi_1^{-1}\right)_{t\in [0,1]}
\end{align*}
is a Hamiltonian homotopy from $\mu_1$ to $\mu_0$. We denote this Hamiltonian homotopy by $(\hat{\mu},\hat{\phi})^{-1}$.

{\em Transitivity:} Let $(\hat{\alpha},\hat{\phi})$ be a Hamiltonian homotopy from $\mu$ to $\nu$ and 
$(\hat{\beta},\hat{\psi})$ a Hamiltonian homotopy from $\nu$ to $\omega$.

We choose a smooth function $\rho: [0,1] \to [0,1]$ that is strictly increasing and a diffeomorphism on its image on
$[0,1/3[$ and on $]2/3,1]$ respectively. On $[1/3,2/3]$ it $\rho$ is required to be equal to $1/2$. We call any such function a {\em gluing function}.
The {\em composition}
\begin{align*}
(\hat{\beta},\hat{\psi})\Box_{\rho}(\hat{\alpha},\hat{\phi}):=(\hat{\beta}\Box_{\rho}\hat{\alpha},\hat{\psi}\Box_{\rho}\hat{\phi})
\end{align*}
of $(\hat{\alpha},\hat{\phi})$ and $(\hat{\beta},\hat{\psi})$ {\em with respect to} $\rho$
is given by
\begin{align*}
(\hat{\psi}\Box_{\rho}\hat{\phi})(t):=
\begin{cases}
\phi_{2\rho(t)} &0\le t \le 1/3 \\
\phi_1          &1/3 \le t \le 2/3 \\
\psi_{2\rho(t)-1}\circ \phi_1 & 2/3 \le t \le 1
\end{cases}
\quad \quad \text{ and }
\end{align*}
\begin{align*}
(\hat{\beta}\Box_{\rho}\hat{\alpha})(t):=
\begin{cases}
\alpha_{2\rho(t)} & 0\le t \le 1/3 \\
\alpha_1=\nu=\beta_0          & 1/3 \le t \le 2/3 \\
\beta_{2\rho(t)-1} & 2/3 \le t \le 1
\end{cases}
\quad \quad \text{ respectively.}
\end{align*}
It is straightforward to check that $(\hat{\beta},\hat{\psi})\Box_{\rho}(\hat{\alpha},\hat{\phi})$
is a Hamiltonian homotopy from $\mu$ to $\omega$.
\end{proof}

\begin{Definition}\label{def:moduliofcoisotropicsections}
Let $(E,\Pi)$ be a coisotropic vector bundle.
We denote the set of equivalence classes of coisotropic sections under $\sim_{H}$
by $\mathcal{M}(E,\Pi)$ and call it the set of {\em coisotropic sections} of $(E,\Pi)$ {\em modulo Hamiltonian homotopies}
or the {\em moduli space of coisotropic sections} of $(E,\Pi)$.
\end{Definition}

\subsection{The groupoid of coisotropic sections}\label{ss:groupoidofcoisotropicsections}

We want to construct a groupoid whose set of orbits is equal to the moduli space of coisotropic sections $\mathcal{M}(E,\Pi)$.
The main problem is that there is no ``natural'' composition on the set of Hamiltonian homotopies (with matching data at the
end of the first one and at the beginning of the second one respectively). The operation $\Box_{\rho}$
depends on a choice of a gluing function $\rho$ and is not associative. To overcome these problems
we introduce an equivalence relation on the set of Hamiltonian homotopies:

\begin{Definition}\label{def:isotopiesofHamiltonianhomotopies}
Let $(E,\Pi)$ be a coisotropic vector bundle.
An {\em isotopy of Hamiltonian homotopies} is a pair $(\hat{\mu},\hat{\Phi})$ where
\begin{itemize}
\item[(a)] $\hat{\mu}$ is a section of the pull back bundle of $E$ along $S\times [0,1]^{2}\to S$ and
\item[(b)] $\hat{\Phi}$ is a smooth mapping $E\times [0,1]^{2} \to E$ whose restriction to $E\times \{t\}\times \{s\}$
is a diffeomorphism for arbitrary $s,t \in [0,1]$
\end{itemize}
such that
\begin{itemize}
\item[(a')] the restriction of $\hat{\Phi}$ to $E\times \{0\} \times [0,1]$ is equal to $(\id_{E})_{s\in [0,1]}$,
\item[(b')] the restriction of $\hat{\mu}$ to $E\times \{1\} \times [0,1]$ is constant in $s\in I$,
\item[(c')] there is a smooth function $F: E\times [0,1]^{2}\to \mathbb{R}$ such that
the restriction of $\hat{\Phi}$ to $E \times I \times \{s\}$ is the smooth one-parameter family of Hamiltonian diffeomorphisms
generated by the restriction of $F$ to $E\times I \times \{s\}$ and
\item[(d')] the image of the graph of $\mu_{0,s}$ under $\Phi_{t,s}$ is equal to the 
graph of the $\mu_{t,s}$ for all $s,t\in [0,1]$.
\end{itemize}.

We say that an isotopy of Hamiltonian homotopies $(\hat{\mu},\hat{\Phi})$
starts at the Hamiltonian homotopy $(\hat{\mu}|_{S\times [0,1] \times \{0\}},\hat{\Phi}|_{E\times [0,1] \times \{0\}})$
and ends at the Hamiltonian homotopy $(\hat{\mu}|_{S\times [0,1] \times \{1\}},\hat{\Phi}|_{E\times [0,1] \times \{1\}})$.
\end{Definition}

\begin{Lemma}\label{lemma:isotopiesofHamiltonianhomotopies}
\begin{itemize}
\item[(a)]
The relation on the set of Hamiltonian homotopies given by
\begin{align*}
(\hat{\mu},\hat{\phi}) \simeq_{H} (\hat{\nu},\hat{\psi}) :\Leftrightarrow
\end{align*}
there is an isotopy of Hamiltonian homotopies from $(\hat{\mu},\hat{\phi})$ to $(\hat{\nu},\hat{\psi})$;
defines an equivalence relation on the set of Hamiltonian homotopies.
\item[(b)] Let $\rho$ and $\rho'$ be two gluing functions. Then the compositions
of Hamiltonian homotopies with respect to $\rho$ and $\rho'$ coincide
up to $\simeq_{H}$.
\item[(c)] The Hamiltonian homotopies
\begin{align*}
\id_{\mu_0}\Box_{\rho}(\hat{\mu},\hat{\phi}) \text{ and } (\hat{\mu},\hat{\phi})\Box_{\rho}\id_{\mu_1}
\end{align*}
are equivalent to $(\hat{\mu},\hat{\phi})$.
\item[(d)]
The Hamiltonian homotopies 
\begin{align*}
(\hat{\mu},\hat{\phi})^{-1}\Box_{\rho}(\hat{\mu},\hat{\phi}) \text{ and } (\hat{\mu},\hat{\phi})\Box_{\rho}(\hat{\mu},\hat{\phi})^{-1}
\end{align*}
are equivalent to $\id_{\mu_{0}}$.
\item[(e)]
The operation $\Box_{\rho}$ defined in the proof of Lemma \ref{lemma:Hamiltonianhomotopy}
descends to the set of Hamiltonian homotopies modulo isotopies of Hamiltonian homotopies and is associative there.
\end{itemize}
\end{Lemma}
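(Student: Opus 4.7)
The strategy for all five parts is to produce the required isotopies of Hamiltonian homotopies by smoothly interpolating either a gluing function or a reparametrization of the time variable $t$, while tracking how the generating Hamiltonians change. Since condition (d') of Definition \ref{def:isotopiesofHamiltonianhomotopies} forces $\hat{\mu}$ to be the pushforward of the graph of $\mu_0$ by $\hat{\Phi}$, only the diffeomorphism part and its generator need to be specified explicitly in what follows. For (a), reflexivity is witnessed by the constant $\sigma$-family $(\hat{\mu},\hat{\phi})$ and symmetry by $\sigma\mapsto 1-\sigma$; transitivity is obtained by gluing two isotopies along $\sigma=1/2$ via the same construction as in the proof of Lemma \ref{lemma:Hamiltonianhomotopy} but applied to the $\sigma$-parameter rather than the $t$-parameter, with a common generating function assembled from those of the two constituent isotopies and a chain-rule factor coming from the $\sigma$-gluing function.

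For (b), I use the straight-line interpolation $\rho_\sigma:=(1-\sigma)\rho+\sigma\rho'$; strict monotonicity on $[0,1/3)\cup(2/3,1]$ and constancy at value $1/2$ on $[1/3,2/3]$ are preserved under convex combinations, so every $\rho_\sigma$ is a gluing function, and substituting $\rho_\sigma$ into the formula of Lemma \ref{lemma:Hamiltonianhomotopy} yields a smooth $\sigma$-family linking the two compositions. For (c), the composition $\id_{\mu_0}\Box_\rho(\hat{\mu},\hat{\phi})$ is the original Hamiltonian homotopy precomposed with the piecewise reparametrization $r_\rho:[0,1]\to[0,1]$ equal to $0$ on $[0,2/3]$ and to $2\rho(t)-1$ on $[2/3,1]$; interpolating linearly between $r_\rho$ and $\id_{[0,1]}$ through the $\sigma$-family $\sigma t+(1-\sigma)r_\rho(t)$, and using the resulting maps to reparametrize $\hat{\phi}$, produces the required isotopy. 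The right unit law is symmetric.

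For (d), I contract the composition $(\hat{\mu},\hat{\phi})\Box_\rho(\hat{\mu},\hat{\phi})^{-1}$ to the identity by the explicit formula
$$\hat{\Phi}(x,t,\sigma):=\begin{cases} \phi_{2\sigma\rho(t)}(x) & 0\le t\le 1/3,\\ \phi_\sigma(x) & 1/3\le t\le 2/3,\\ \phi_{\sigma(2-2\rho(t))}(x) & 2/3\le t\le 1,\end{cases}$$
which equals $\id_E$ at $\sigma=0$ and the given composition at $\sigma=1$. The generator is obtained from that of $\hat{\phi}$ by the chain rule, and continuity at $t=1/3,2/3$ is ensured because $\rho'$ vanishes there. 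For (e), once (a)--(d) are established, well-definedness of $\Box_\rho$ on $\simeq_H$-classes follows by concatenation in $\sigma$ as in the proof of transitivity, independence of $\rho$ is (b), and the unit laws are (c); associativity reduces to the observation that $(A\Box_\rho B)\Box_\rho C$ and $A\Box_\rho(B\Box_\rho C)$ differ by a smooth $t$-reparametrization fixing $\{0,1\}$, and any two such reparametrizations are connected by a linear homotopy through reparametrizations, which I lift to an isotopy by the same interpolation argument as in (b).

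The main obstacle is the careful verification of condition (c') throughout: every piecewise construction in $t$ or $\sigma$ produces generating Hamiltonians with chain-rule prefactors that must extend smoothly across the junction points $t=1/3,2/3$. This is routine since $\rho$ is constant on $[1/3,2/3]$ and hence $\rho'$ vanishes identically there together with all its derivatives, but the verification must be carried out carefully at each step to produce a single smooth function $F:E\times[0,1]^2\to\mathbb{R}$ generating the entire isotopy.
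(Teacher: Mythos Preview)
Your plan is correct and follows essentially the same route as the paper: produce each required isotopy by smoothly interpolating a gluing function or a time-reparametrization and track the generating Hamiltonian via the chain rule. The paper's proof differs only in minor technical choices---it mediates all interpolations through an auxiliary bump function $\tau:[0,1]\to[0,1]$ (constant near $0$ and $1$) rather than the straight linear interpolations you use, and its treatment of (c) proceeds in two steps (first a reparametrization via $g_s(t)=(1-\tfrac{2}{3}(1-\tau(s)))t$, then an affine interpolation of the resulting reparametrization to the identity) whereas your single linear interpolation $\sigma t+(1-\sigma)r_\rho(t)$ accomplishes the same thing at once. For (e), the paper gives explicit interval-matching diffeomorphisms $\kappa_s$ and then glues three isotopies, while you invoke directly that the two triple compositions differ by a single smooth monotone reparametrization of $[0,1]$; both arguments are standard and equivalent. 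One small slip: the explicit formula you display in (d) is the contraction of $(\hat\mu,\hat\phi)^{-1}\Box_\rho(\hat\mu,\hat\phi)$, not of $(\hat\mu,\hat\phi)\Box_\rho(\hat\mu,\hat\phi)^{-1}$ as you state, but the method is identical for the other case.
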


\begin{proof}

{\em (a):}
The proof can be copied from the proof of Lemma \ref{lemma:Hamiltonianhomotopy}. In particular one
makes use of the fact that isotopies of Hamiltonian homotopies can be composed along $I \times \{1\}$
and $I\times \{0\}$ respectively if the data attached to the boundaries match.

{\em (b):}
Choose a smooth function $\tau: [0,1]\to [0,1]$ that is $0$ on $[0,1/3]$, $1$ on $[2/3,1]$ and a diffeomorphism
on $]1/3,2/3[$.
Consider the smooth one-parameter family of gluing functions $\hat{\rho}(s):=(1-\tau(s))\rho + \tau(s) \rho'$. Then
\begin{align*}
(\hat{\nu},\hat{\psi})\Box_{\hat{\rho}(s)}(\hat{\mu},\hat{\phi})
\end{align*}
is an isotopy of Hamiltonian homotopies from
$(\hat{\nu},\hat{\psi})\Box_{\rho}(\hat{\mu},\hat{\phi})$
to $(\hat{\nu},\hat{\psi})\Box_{\rho'}(\hat{\mu},\hat{\phi})$.

{\em (e):}
Choose a smooth function $\tau: [0,1]\to [0,1]$ that is $0$ on $[0,1/3]$, $1$ on $[2/3,1]$ and a diffeomorphism
on $]1/3,2/3[$. Setting $((\hat{\mu},\hat{\phi})\Box_{\rho}\id_{\mu_1})\circ g_{s}(t)$ with
\begin{align*}
g_{s}(t):=(1-\frac{2}{3}(1-\tau(s)))t
\end{align*}
yields an isotopy of Hamiltonian homotopies from $(\hat{\mu},\hat{\phi})\Box_{\rho}\id_{\mu_1}$ to $(\mu_{\alpha(t)},\hat{\phi}_{\alpha(t)})$
where $\alpha$ is a diffeomorphism of $[0,1]$ relative to the boundary. Now
\begin{align*}
(\mu_{((1-\tau(s))\alpha(t)+\tau(s)t)},\phi_{((1-\tau(s))\alpha(t)+\tau(s)t)})
\end{align*}
is an isotopy of Hamiltonian homotopies from $(\mu_{\alpha(t)},\phi_{\alpha(t)})$ to $(\hat{\mu},\hat{\phi})$.
Since $\simeq_{H}$ is an equivalence relation we obtain $(\hat{\mu},\hat{\phi})\Box_{\rho}\id_{\mu_{1}}\simeq_{H}(\hat{\mu},\hat{\phi})$.
Similarly one finds an isotopy of Hamiltonian homotopies from $\id_{\mu_{0}}\Box_{\rho}(\hat{\mu},\hat{\phi})$ to
$(\hat{\mu},\hat{\phi})$.

{\em (d):}
The Hamiltonian homotopy $(\hat{\mu},\hat{\phi})^{-1}\Box_{\rho}(\hat{\mu},\hat{\phi})$ is given by
\begin{align*}
\begin{cases}
\mu_{2\rho(t)} & 0\le t \le 1/3 \\
\mu_1          & 1/3 \le t \le 2/3 \\
\mu_{2(1-\rho(t))} & 2/3 \le t \le 1
\end{cases}
\quad ,
\begin{cases}
\phi_{2\rho(t)} & 0\le t \le 1/3 \\
\phi_1          & 1/3 \le t \le 2/3 \\
\phi_{2(1-\rho(t))}\circ \phi_1 & 2/3 \le t \le 1
\end{cases}
.
\end{align*}
Choose a smooth function $\sigma(s)$ from $[0,1]$ to $[0,1]$ which is $1$ for $s\le 1/3$ and vanishes for $s\ge 2/3$.
The following is an isotopy of Hamiltonian homotopies 
from $(\hat{\mu},\hat{\phi})^{-1}\Box_{\rho}(\hat{\mu},\hat{\phi})$ to $\id_{\mu_0}$:
\begin{align*}
\begin{cases}
\mu_{2\rho(t)\sigma(s)} & 0\le t \le 1/3 \\
\mu_{\sigma(s)}          & 1/3 \le t \le 2/3 \\
\mu_{2(1-\rho(t))\sigma(s)} & 2/3 \le t \le 1
\end{cases}
\quad ,
\begin{cases}
\phi_{2\rho(t)\sigma(s)} & 0\le t \le 1/3 \\
\phi_{\sigma(s)}          & 1/3 \le t \le 2/3 \\
\phi_{2(1-\rho(t))\sigma(s)}\circ \phi_1 & 2/3 \le t \le 1
\end{cases}
.
\end{align*}
For $(\hat{\mu},\hat{\phi})\Box_{\rho}(\hat{\mu},\hat{\phi})^{-1}$ an isotopy of Hamiltonian homotopies to $\id_{\mu_{0}}$
can be found in the same
fashion.

{\em (e):}
That the composition of two Hamiltonian homotopies with respect to some gluing function $\rho$ descends to the set of
equivalence classes
of $\simeq_{H}$ is implied by the fact that
isotopies of Hamiltonian homotopies might be glued along their boundary strata $\{1\} \times [0,1]$ and $\{0\}\times [0,1]$ respectively
if the data attached to the boundaries match.

The associativity of $\Box{\rho}$ on the set of equivalence classes $\simeq_{H}$ is proved as follows:
Let $(\hat{\alpha},\hat{\phi})$ be a Hamiltonian homotopy from $\mu$ to $\nu$, 
$(\hat{\beta},\hat{\psi})$ a Hamiltonian homotopy from $\nu$ to $\omega$ and
$(\hat{\gamma},\hat{\varphi})$ a Hamiltonian homotopy from $\omega$ to $\Omega$. We have to find an isotopy of Hamiltonian homotopies
from
\begin{align*}
A:=(\hat{\gamma},\hat{\varphi})\Box_{\rho}\left((\hat{\beta},\hat{\psi}) \Box_{\rho} (\hat{\alpha},\hat{\phi})\right)
\end{align*}
to
\begin{align*}
B:=\left((\hat{\gamma},\hat{\varphi}) \Box_{\rho} (\hat{\beta},\hat{\psi})\right) \Box_{\rho}(\hat{\alpha},\hat{\phi}).
\end{align*}
First we choose a smooth one-parameter family of diffeomorphisms $\kappa_{s}$ of $[0,1]$
relative to the boundary $\{0\}\cup \{1\}$ such that
$\kappa_{0}=\id$ and $\kappa_{1}$ maps the interval $[1/5,2/5]$ to $[1/9,2/9]$ and $[3/5,4/5]$ to $[1/3,2/3]$. We extend
the Hamiltonian homotopy $A$ to an isotopy of Hamiltonian homotopies by composing with $\kappa_{s}$. Analogously
one reparametrizes $B$ by an isotopy of Hamiltonian homotopies such that $[1/5,2/5]$ and 
$[3/5,4/5]$ get mapped to $[1/3,2/3]$ and $[7/9,8/9]$ respectively.
The two resulting Hamiltonian homotopies can be joined in an ``affine'' manner using the function $\tau$ from part (b).
Since $\simeq_{H}$ is an equivalence relation,
these three isotopies of Hamiltonian homotopies (reparametrization of $A$, reparametrization of $B$ and affine connection between
the reparametrized Hamiltonian homotopies) can be glued together to yield an isotopy of Hamiltonian homotopies
from $A$ to $B$. 
\end{proof}

\begin{Definition}\label{def:groupoidofcoisotropicsections}
Let $(E,\Pi)$ be a coisotropic vector bundle. The {\em groupoid of coisotropic sections} of $(E,\Pi)$, which we denote
by $\hat{\mathcal{C}}(E,\Pi)$, is the small groupoid where
\begin{itemize}
\item[(a)] the set of objects is the set of coisotropic sections $\mathcal{C}(E,\Pi)$ of $(E,\Pi)$,
\item[(b)] the set of morphisms $\Hom(\mu,\nu)$ between two coisotropic sections $\mu$ and $\nu$ is the set
of all Hamiltonian homotopies from $\mu$ to $\nu$ modulo isotopies of Hamiltonian homotopies and
\item[(c)] the composition is induced from composition of Hamiltonian homotopies with respect to some gluing function.
\end{itemize}
\end{Definition}

Recall that {\em small groupoid} is a groupoid whose objects and morphisms form honest sets and not just classes.

It seems very likely that the groupoid $\hat{\mathcal{C}}(E,\Pi)$ can be understood as a truncation of a
weak $\infty$-groupoid $\hat{\mathcal{C}}^{\infty}(E,\Pi)$ at its two-morphisms which should be 
given by isotopies of Hamiltonian homotopies.
In fact the two ways of gluing
isotopies of Hamiltonian homotopies that were used in the proof of Lemma \ref{lemma:isotopiesofHamiltonianhomotopies}
should correspond to vertical and horizontal composition of two-morphisms.

The set of orbits of $\hat{\mathcal{C}}(E,\Pi)$ is the moduli space of coisotropic sections $\mathcal{M}(E,\Pi)$
of $(E,\Pi)$ modulo Hamiltonian homotopies.
We give a short overview of known results related to this object.

Under the assumption that $S$ is a {\em Lagrangian submanifold} of a {\em symplectic manifold}, any
embedding of $NS\cong T^{*}S=E$ into $M$ yields a Poisson structure on $E$ which is symplectomorph to the natural symplectic structure
on some open neighbourhood $U$ of $S$ in $E$, see \cite{WeinsteinD}.
This allows us to reduce the nearby deformation problem of $L$ in $(M,\omega)$ to the case $L\hookrightarrow (T^{*}L,\omega_{\can})$.
Hence $L\hookrightarrow (T^{*}L,\omega_{\can})$ is a ``universal model'' of $L$ as a Lagrangian submanifold of a symplectic manifold,
as far as local properties are concerned.
For $L\hookrightarrow (T^{*}L,\omega_{\can})$ the following facts are well-known:
\begin{itemize}
\item[(a)] the set of coisotropic sections of $(T^{*}L,\omega_{\can})$ is isomorphic to the set of closed
one forms on $L$,
\item[(b)] two coisotropic sections of $(T^{*}L,\omega_{\can})$ are related by a Hamiltonian homotopy if and only if
their classes in de~Rham cohomology coincide,
\item[(c)] the space of coisotropic sections modulo Hamiltonian homotopies is isomorphic to
$H^{1}(L,\mathbb{R})$.
\end{itemize}
The implication ($\Rightarrow$) in (b) needs some additional argument using the symplectic action of a path inside an exact symplectic
manifold, see \cite{McDuffSalamon}
for instance.

The case of a {\em coisotropic submanifold} $S$ of a {\em symplectic manifold} was studied by Zambon (\cite{Zambon})
and Oh and Park (\cite{OhPark}). Zambon investigated the set of coisotropic sections 
and proved that it does not carry a reasonable
structure of a (Frech\'et-)manifold in general. This observation was explained by Oh and Park in terms of their {\em strong
homotopy Lie algebroid}. The idea is to consider the
{\em Lie algebroid complex} $(\Gamma(\wedge E),\partial)$ of $S$ in $(E,\Pi)$
that is an appropriate replacement of the complex $(\Omega(S),d_{DR})$. They constructed
higher order operations on $\Gamma(\wedge E)$
and identified coisotropic sections of $(E,\Pi)$ contained in some open neighbourhood $U$ of $S$ in $E$
with special elements of $\Gamma(E)$ contained in $U$
that satisfy a generalization
of the closedness condition $\partial(\alpha)=0$. To be more precise,
Oh and Park equipped $\Gamma(\wedge E)$ with the structure of an $L_{\infty}$-algebra compatible with $\partial$
and proved that Maurer--Cartan elements of this structure which are contained in $U$ are exactly the coisotropic sections of $(E,\Pi)$
which are contained in $U$. This construction implies that the formal neighbourhood of $S$ in the
space of coisotropic sections is not necessarily a vector space which explains Zambon's observation.

Cattaneo and Felder (\cite{CattaneoFelder}) extended the construction of the $L_{\infty}$-algebra structure on $\Gamma(\wedge E)$ to the Poisson case.
However the connection between coisotropic sections on the one hand and Maurer--Cartan elements on the other hand
as found by Oh and Park in the symplectic
setting does not hold in the Poisson setting. See \cite{Schaetz} for an example of a coisotropic submanifold of a Poisson manifold
for which the strong homotopy Lie algebroid fails
to detect obstruction to deformations in any open neighbourhood of the coisotropic submanifold, i.e. there are far more solutions
of the Maurer--Cartan equation than coisotropic sections. In \cite{Schaetz} an appropriate replacement of the strong
homotopy Lie algebroid was presented. It is a differential graded Poisson algebra known as the BFV-complex. Furthermore
it was proved that the set of coisotropic sections $\mathcal{C}(E,\Pi)$ of $(E,\Pi)$ is isomorphic to the set of certain equivalence
classes
of normalized Maurer--Cartan elements of the BFV-complex. The situation will be reviewed in more detail in the next Section.

We remark that in the case of a coisotropic submanifold inside a symplectic manifold
a complete description of $\mathcal{M}(E,\Pi)$ in terms of the strong homotopy Lie
algebroid is missing, although we expect that the arguments used in the Lagrangian case could be adapted. 
Moreover, even in case of a Lagrangian submanifold inside a symplectic manifold one obtains an isomorphism on the level
of equivalence classes of coisotropic section under $\sim_H$ but not a description of the groupoid $\hat{\mathcal{C}}(E,\Pi)$
itself.

\section{The BFV-groupoid}\label{s:moduli_algebraic}
Given a coisotropic vector bundle $(E,\Pi)$ supplemented by a choice of auxiliary data,
one can construct a certain differential graded Poisson algebra, called a BFV-complex for $(E,\Pi)$.
We review this construction in Subsection \ref{ss:BFV-complex}. Every differential Lie algebra
comes along with a group of inner automorphisms, which we spell out for the special case of the
BFV-complex in Subsection \ref{ss:gaugeaction}. Furthermore there is a set of distinguished elements of the BFV-complex,
consisting of those elements which satisfy the Maurer--Cartan equation. The group of inner automorphisms acts
on this set. We need to restrict our attention to certain classes
of Maurer--Cartan elements: the ``normalized'' ones (Subsection \ref{ss:normalizedMC-elements}) and
the ``geometric'' ones (Subsection \ref{ss:geometricMC-elements}). Both classes are connected to the 
geometry of the underlying coisotropic vector bundle (Theorem \ref{thm:normalizedMCelements} and \ref{thm:geometricMCelements}).
In Subsection \ref{ss:gaugehomotopies} an equivalence relation $\sim_{G}$ on the set of geometric
Maurer--Cartan elements of the BFV-complex is defined. A groupoid $\hat{\mathcal{D}}(E,\Pi)$ is constructed
whose set of orbits is equal to the set of equivalence classes with respect to $\sim_{G}$. This groupoid is
the quotient of a groupoid $\hat{\mathcal{D}}_{\geo}(E,\Pi)$ (Subsection \ref{ss:geometricBFV-groupoid})
by a full normal subgroupoid to be introduced in Subsection \ref{ss:BFV-groupoid}.

\subsection{The BFV-complex}\label{ss:BFV-complex}

The BFV-complex was originally introduced by Batalin, Fradkin and Vilkovisky (\cite{BatalinFradkin},\cite{BatalinVilkovisky})
in order to understand physical systems with complicated symmetries. Later on this construction was given an interpretation
in terms of homological algebra by Stasheff (\cite{Stasheff}). In the smooth setting a convenient globalization was
found by Bordemann and Herbig (\cite{Bordemann}, \cite{Herbig}). In \cite{Schaetz} Bordemann and Herbig's approach is put into a
more conceptual framework,
in particular a conceptual construction of the BFV-bracket is given. One of the advantages of this conceptual approach
is that it allows us to understand the dependence of the BFV-complex on certain choices involved in its construction (\cite{Schaetz2}).

Consider a Poisson manifold $(E,\Pi)$ where $E\to S$ is a vector bundle. Let $\mathcal{E}\to E$
be the pull back of $E\to S$ along $E\to S$, i.e. the vector bundle fitting into the following Cartesian square
$$
\xymatrix{
\mathcal{E} \ar[r] \ar[d] & E \ar[d] &\\
E \ar[r] & S.}
$$
One defines $BFV(E):=\Gamma(\wedge \mathcal{E} \otimes \wedge \mathcal{E}^{*})$ which is a unital bigraded algebra with bigrading
given by
\begin{align*}
BFV^{(p,q)}(E):=\Gamma(\wedge^{p} \mathcal{E} \otimes \wedge^{q} \mathcal{E}^{*}).
\end{align*}
In physical terminology $p$ ($q$) is referred to as the {\em ghost degree} ({\em ghost-momentum degree}). Moreover the decomposition
of $BFV(E)$ by
\begin{align*}
BFV^{k}(E):=\oplus_{p-q=k}BFV^{(p,q)}(E)
\end{align*}
equips $BFV(E)$ with the structure of a graded algebra. We refer to $k$ as the {\em total degree}.
There is yet another decomposition of $BFV(E)$ that will be useful later: for arbitrary $r\in \mathbb{N}$ set 
$BFV_{\ge r}(E):=\Gamma(\wedge \mathcal{E}\otimes \wedge^{\ge r} \mathcal{E}^{*})$ which is an ideal.
The integer $r$ is called the {\em resolution degree}.

Every choice of connection on $E\to S$ allows us one to construct a graded Poisson bracket on the
graded unital algebra $BFV(E)$, known as the {\em BFV-bracket} (the construction can be found in \cite{Herbig} or in \cite{Schaetz} for instance).
Moreover, the graded Poisson structures on
$BFV(E)$ coming from different choices of connections are all isomorphic (\cite{Schaetz2}).
Hence we choose a connection on $E\to S$ once and for all and denote the corresponding
graded Poisson bracket by $[\cdot,\cdot]_{BFV}$. Independently of the choice of connection we made, $[\cdot,\cdot]_{BFV}$
has the following important properties:

\begin{Lemma}\label{lemma:BFV-bracket}
Let $E\to S$ be a vector bundle equipped with a Poisson bivector field $\Pi$. Choose a connection on $E\to S$ and denote the corresponding
BFV-bracket on $BFV(E)$ by $[\cdot,\cdot]_{BFV}$. Denote the projection from $BFV(E)$ to $BFV^{(0,0)}(E)=\mathcal{C}^{\infty}(E)$
by $P$.
Then $[\cdot,\cdot]_{BFV}$ satisfies the following two properties:
\begin{itemize}
\item[(a)] The restriction of $P\circ [\cdot,\cdot]_{BFV}$ to $\mathcal{C}^{\infty}(E)\times \mathcal{C}^{\infty}(E)$
coincides with $\{\cdot,\cdot\}_{\Pi}$.
\item[(b)] The restriction of $P\circ [\cdot,\cdot]_{BFV}$ to $\Gamma(\mathcal{E})\times \Gamma(\mathcal{E}^{*})$
coincides with the pairing between $\Gamma(\mathcal{E})$ and $\Gamma(\mathcal{E}^{*})$ induced from the natural fiber
pairing between $\mathcal{E}$ and $\mathcal{E}^{*}$.
\end{itemize}
\end{Lemma}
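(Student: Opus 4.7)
The plan is to verify both claims directly from the explicit construction of the BFV-bracket recalled in \cite{Herbig} and \cite{Schaetz}. That construction produces $[\cdot,\cdot]_{BFV}$ by combining two ingredients: a canonical fiberwise ``ghost--antighost'' pairing between $\mathcal{E}$ and $\mathcal{E}^{*}$, and a horizontal lift of $\{\cdot,\cdot\}_{\Pi}$ obtained from the chosen connection on $E\to S$ via the induced splitting $TE=HE\oplus VE$ with $VE\cong \mathcal{E}$. Both parts of the lemma then reduce to inspecting which of these two ingredients can contribute to the $(0,0)$-bidegree component of a bracket with the specific inputs in question; all connection-dependent correction terms will be shown to carry strictly positive ghost or ghost-momentum degree and hence be annihilated by $P$.

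For part (a), take $f,g\in\mathcal{C}^{\infty}(E)=BFV^{(0,0)}(E)$. The canonical fiberwise pairing strictly lowers ghost or ghost-momentum degree, so it contributes nothing when both inputs are pure functions. Hence $[f,g]_{BFV}$ is produced entirely by the horizontal lift of $\Pi$, whose contribution in bidegree $(0,0)$ is by construction $\Pi$ itself, with all remaining (connection-dependent) terms carrying resolution or ghost degree at least one. Applying $P$ extracts precisely this piece, yielding $\{f,g\}_{\Pi}$.

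For part (b), take $\xi\in\Gamma(\mathcal{E})=BFV^{(1,0)}$ and $\eta\in\Gamma(\mathcal{E}^{*})=BFV^{(0,1)}$. Their bracket has total degree zero and therefore lives in $\bigoplus_{p}BFV^{(p,p)}$. The canonical fiberwise pairing evaluates $[\xi,\eta]_{BFV}$ by contraction to the natural pairing $\langle \xi,\eta\rangle\in\mathcal{C}^{\infty}(E)=BFV^{(0,0)}$. The contribution of the horizontal lift of $\Pi$ involves derivatives of $\xi$ and $\eta$ along the horizontal distribution and retains at least one ghost or ghost-momentum factor, so it lies in $BFV^{(p,p)}$ with $p\geq 1$ and is killed by $P$. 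The desired identity follows.

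The main obstacle is the bidegree bookkeeping in the explicit formula for $[\cdot,\cdot]_{BFV}$: one has to verify, on the one hand, that the pairing piece never reaches $BFV^{(0,0)}$ unless the two inputs already pair through complementary ghost and ghost-momentum factors, and, on the other hand, that the lifted-Poisson piece never reaches $BFV^{(0,0)}$ when either input still carries a nontrivial ghost or ghost-momentum factor. Once this is unwound in a local frame of $\mathcal{E}$ adapted to the connection, both claims become routine inspections. As a consistency check, the $(0,0)$-components isolated in (a) and (b) are manifestly intrinsic, matching the assertion that different choices of connection yield isomorphic BFV-Poisson structures.
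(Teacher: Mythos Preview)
The paper does not actually supply a proof of this lemma; it is stated as a pair of structural facts about $[\cdot,\cdot]_{BFV}$ that follow directly from the construction described in the cited references \cite{Herbig} and \cite{Schaetz}. Your proposal is therefore not competing with a proof in the paper but rather filling in the verification the author left implicit.

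That said, your outline is correct and is precisely the intended argument. The decomposition you invoke --- a fiberwise pairing piece $[\cdot,\cdot]_G$, a covariant-derivative piece coming from the horizontal lift of $\Pi$, and connection-dependent correction terms of strictly positive ghost or ghost-momentum degree --- is exactly the one the paper itself relies on later (see the proof of Lemma~\ref{lemma:kernel0}, where the author writes $[F,\cdot]_{BFV}$ as $[F,\cdot]_G + \nabla_{X_F} + \text{higher terms}$ and analyzes each piece). Your bidegree bookkeeping is the right mechanism: on $\mathcal{C}^\infty(E)\times\mathcal{C}^\infty(E)$ the pairing vanishes for degree reasons and only the Poisson piece survives under $P$; on $\Gamma(\mathcal{E})\times\Gamma(\mathcal{E}^*)$ the pairing lands in $BFV^{(0,0)}$ while every other contribution retains a ghost--antighost factor and is killed by $P$. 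There is nothing to add beyond what you have sketched; a local-frame computation makes each step explicit if one wants to be pedantic.
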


The next step is to find a special degree $+1$ element $\Omega$ of $BFV(E)$ satisfying $[\Omega,\Omega]_{BFV}=0$.
It turns out that there is a normalization condition that makes the choice of such $\Omega$ essentially unique and provides
a tight connection to the geometry of the Poisson manifold $(E,\Pi)$: since
\begin{align*}
BFV^{1}(E)=\oplus_{k\ge 1}\Gamma(\wedge^{k}\mathcal{E}\otimes \wedge^{k-1}\mathcal{E}^{*})
\end{align*}
every degree $+1$ element of $BFV(E)$ has a component in $BFV^{(1,0)}(E)=\Gamma(\mathcal{E})$. Additionally
to $[\Omega,\Omega]_{BFV}=0$ we require that the component of $\Omega$ in $\Gamma(\mathcal{E})$
coincides with the tautological section of the bundle $\mathcal{E}\to E$. Such an element $\Omega$ is called a
{\em BFV-charge}. We denote the tautological section by
$\Omega_0$ from now on.

The following Proposition is contained in \cite{Schaetz2} and the proof essentially follows \cite{Stasheff}:

\begin{Proposition}\label{prop:charge}
Let $E$ be a vector bundle equipped with a Poisson bivector field $\Pi$ and denote its zero section by $S$.
Fix a connection on $E\to S$ and denote the corresponding graded Poisson bracket on $BFV(E)$ by $[\cdot,\cdot]_{BFV}$.

\begin{enumerate}
\item There is a degree $+1$ element $\Omega$ of $BFV(E)$ whose component in $\Gamma(\mathcal{E})$ is given by the tautological
section $\Omega_0$ and that satisfies
\begin{align*}
[\Omega,\Omega]_{BFV}=0
\end{align*}
if and only if $S$ is a coisotropic submanifold of $(E,\Pi)$, i.e. $(E,\Pi)$ is a coisotropic vector bundle.
\item Let $\Omega$ and $\Omega'$ be two BFV-charges. Then there is an automorphism
of the graded Poisson algebra $(BFV(E),[\cdot,\cdot]_{BFV})$ that maps $\Omega$ to $\Omega'$. 
\end{enumerate}
\end{Proposition}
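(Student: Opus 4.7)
The plan is obstruction theory with respect to the resolution-degree filtration on $BFV(E)$. Decompose a candidate BFV-charge as $\Omega = \Omega_0 + \Omega_{(1)} + \Omega_{(2)} + \cdots$, where $\Omega_{(k)} \in BFV^{(k+1,k)}(E)$ is forced by total degree $+1$ and resolution degree $k$; since the ghost degree is bounded by $\mathrm{rank}(\mathcal{E})$, this sum terminates. The central tool is the differential $\delta := [\Omega_0, \cdot]_{BFV}$, which shifts bidegree by $(0,-1)$. By Lemma~\ref{lemma:BFV-bracket}(b), the restriction of $\delta$ to $\Gamma(\wedge^\bullet \mathcal{E}^*)$ is the Koszul differential associated with the tautological section of $\mathcal{E}\to E$; since the zero locus of $\Omega_0$ is precisely $S$, one obtains a Koszul resolution of $\mathcal{C}^\infty(S)$. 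In particular the $\delta$-cohomology is $\mathcal{C}^\infty(S)$ in resolution degree $0$ and vanishes in all positive resolution degrees.

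For the forward direction of (1), assume $S$ is coisotropic and construct $\Omega_{(k)}$ inductively on $k$. Expanding $[\Omega,\Omega]_{BFV}$ by resolution degree, the component at resolution degree $k-1$ takes the form $2\delta\Omega_{(k)} + R_k = 0$, where $R_k$ depends only on $\Pi$ and on $\Omega_{(1)},\dots,\Omega_{(k-1)}$. The graded Jacobi identity, together with the vanishing of $[\Omega,\Omega]_{BFV}$ at all lower resolution degrees by the inductive hypothesis, forces $\delta R_k = 0$, so the obstruction to solving for $\Omega_{(k)}$ is the class $[R_k] \in H^\ast(\delta)$. For $k \geq 2$ this class lies in positive resolution degree and therefore vanishes automatically. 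For $k=1$ the obstruction lives in resolution degree $0$; applying $P$ and invoking Lemma~\ref{lemma:BFV-bracket}(a)--(b) identifies the class of $R_1$ in $\mathcal{C}^\infty(S)$ with the restriction to $S$ of $\{\cdot,\cdot\}_\Pi$ applied to pairs of sections of $\mathcal{I}_S$. This class vanishes precisely when $\mathcal{I}_S$ is closed under $\{\cdot,\cdot\}_\Pi$, i.e., when $S$ is coisotropic. The converse direction of (1) is the same computation read backward: projecting $[\Omega,\Omega]_{BFV} = 0$ to bidegree $(0,0)$ via $P$ and applying Lemma~\ref{lemma:BFV-bracket} shows directly that $\mathcal{I}_S$ is a Lie subalgebra of $(\mathcal{C}^\infty(E),\{\cdot,\cdot\}_\Pi)$.

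For (2), let $\Omega$ and $\Omega'$ be two BFV-charges and let $k \geq 1$ be the smallest resolution degree at which they disagree. Subtracting $[\Omega,\Omega]_{BFV}=0=[\Omega',\Omega']_{BFV}$ at resolution degree $k-1$ yields $\delta(\Omega'_{(k)} - \Omega_{(k)}) = 0$; acyclicity of $\delta$ in positive resolution degree produces $Y_{(k)}$ with $\delta Y_{(k)} = \Omega'_{(k)} - \Omega_{(k)}$. The inner automorphism $\exp([\cdot, Y_{(k)}]_{BFV})$ is well defined because bracketing with $Y_{(k)}$ strictly raises resolution degree, so the exponential truncates after finitely many terms; it is an automorphism of the graded Poisson algebra, and it sends $\Omega$ to a charge matching $\Omega'$ through resolution degree $k$. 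Iterating (only finitely many steps are needed, bounded by $\mathrm{rank}(\mathcal{E}) - 1$) and composing the resulting inner automorphisms produces the desired isomorphism. The main obstacle I expect is the explicit $k=1$ calculation in (1): identifying $[R_1]$ with the image of $\{\mathcal{I}_S,\mathcal{I}_S\}_\Pi$ in $\mathcal{C}^\infty(S)$ requires a careful unwinding of the connection-dependent construction of $[\cdot,\cdot]_{BFV}$ and of the Koszul interpretation of $\delta$; once this crucial step is in hand, the higher-degree obstructions and the entirety of part~(2) follow from formal Koszul acyclicity.
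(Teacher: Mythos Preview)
The paper does not give its own proof of this proposition; it simply records that the statement is contained in \cite{Schaetz2} and that the argument essentially follows Stasheff \cite{Stasheff}. Your obstruction-theoretic scheme with respect to the resolution-degree filtration is exactly that Stasheff-style argument, so your overall strategy is the intended one.

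There is, however, one genuine imprecision worth fixing. You set $\delta:=[\Omega_0,\cdot]_{BFV}$ and assert it shifts bidegree by $(0,-1)$. It does not: the BFV-bracket has, besides the pairing piece $[\cdot,\cdot]_G$, contributions built from $\Pi$ and the connection, and these do \emph{not} lower the ghost-momentum degree. Concretely, $[\Omega_0,\Omega_0]_{BFV}$ lands in $BFV^{(2,0)}(E)$ and is precisely your obstruction $R_1$; if $\delta$ really shifted by $(0,-1)$ this bracket would vanish automatically and there would be nothing for the coisotropy condition to detect, contradicting your own $k=1$ step. The correct Koszul differential is $\delta:=[\Omega_0,\cdot]_G$ (this is exactly how the paper uses it in the discussion following Lemma~\ref{lemma:geometricMC}), and the remaining pieces of $[\cdot,\cdot]_{BFV}$ go into the remainder terms $R_k$. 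With this correction your inductive identity $2\delta\Omega_{(k)}+R_k=0$ and the acyclicity argument go through as you describe, and your treatment of part~(2) via inner automorphisms generated by $Y_{(k)}\in BFV^0_{\ge 2}$ (cf.\ Lemma~\ref{lemma:integrability0}) is correct.
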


Consequently one can construct a differential graded Poisson algebra of the form
$(BFV(E),[\Omega,\cdot]_{BFV},[\cdot,\cdot]_{BFV})$ for any given coisotropic vector bundle $(E,\Pi)$.
We call such a differential graded Poisson algebra a {\em BFV-complex} 
for $(E,\Pi)$. It is unique up to isomorphisms (\cite{Schaetz2}). For simplicity we fix
1. a connection $\nabla$ on $E\to S$ and 2. a BFV-charge $\Omega$ once and for all and refer to the corresponding BFV-complex as {\em the}
BFV-complex associated to the coisotropic vector bundle $(E,\Pi)$.

\subsection{The gauge group}\label{ss:gaugeaction}
The graded Poisson algebra $(BFV(E),[\cdot,\cdot]_{BFV})$ comes along with a group of inner automorphisms.
We essentially follow \cite{Schaetz} in our exposition but make some definitions more precise.

The subspace
\begin{align*}
BFV^{0}(E)=\oplus_{m\ge 0}\Gamma(\wedge^{m}\mathcal{E}\otimes \wedge^{m}\mathcal{E}^{*}) \subset BFV(E)
\end{align*}
is a unital graded subalgebra of the unital bigraded algebra $BFV(E)$. Moreover it is a Lie subalgebra
of the graded Lie algebra $(BFV(E),[\cdot,\cdot]_{BFV})$. The adjoint action of $BFV(E)$ restricts to
a Lie algebra action of $BFV^{0}(E)$ on $BFV(E)$. This is
the {\em infinitesimal gauge action of} $(BFV(E),[\cdot,\cdot]_{BFV})$.

The graded Poisson algebra $(BFV^{0}(E),[\cdot,\cdot]_{BFV})$ is filtered by a family of graded Poisson algebras
$(BFV^{0}_{\ge r}(E),[\cdot,\cdot]_{BFV})$. Here $BFV^{0}_{\ge r}(E)$ denotes the intersection of
$BFV^{0}(E)$ with the ideal $BFV_{\ge r}(E)$. Lemma \ref{lemma:BFV-bracket} implies that the multiplicative ideals
$BFV^{0}_{\ge r}(E)$ are Poisson subalgebras of $(BFV^{0}(E),[\cdot,\cdot]_{BFV})$. Hence we obtain a filtration
of the infinitesimal gauge action of $(BFV(E),[\cdot,\cdot]_{BFV})$.

Let $\mathcal{E}_{[0,1]}$ be the pull back of $\mathcal{E} \to E$ along $E\times [0,1]\to E$.
We define
\begin{align*}
\widetilde{BFV}(E):=\Gamma(\wedge \mathcal{E}_{[0,1]}\otimes \wedge \mathcal{E}^{*}_{[0,1]})
\end{align*}
which inherits the algebra structure, the bigrading, the total grading, 
the filtration by resolution degree and the graded Poisson bracket
from the corresponding structures on $(BFV(E),[\cdot,\cdot]_{BFV})$. 
In particular the adjoint action restricts to a Lie algebra action
of $\widetilde{BFV}^{0}(E)$ on $\widetilde{BFV}(E)$ and this action
is filtered by actions of $(\widetilde{BFV}^{0}_{\ge r}(E),[\cdot,\cdot]_{BFV})$.
We denote the Lie subalgebra of inner derivations of $\widetilde{BFV}(E)$
coming from the action of $(\widetilde{BFV}^{0}_{\ge r}(E),[\cdot,\cdot]_{BFV})$ by
\begin{align*}
\underline{\mathfrak{inn}}_{\ge r}(BFV(E))
\end{align*}
and set $\underline{\mathfrak{inn}}(BFV(E)):=\underline{\mathfrak{inn}}_{\ge 0}(BFV(E))$.

The {\em group of automorphisms} $\Aut(BFV(E))$ of $(BFV(E),[\cdot,\cdot]_{BFV})$ is the group
of all automorphisms of the unital algebra $BFV(E)$ that preserve the total degree
and the graded Poisson bracket $[\cdot,\cdot]_{BFV}$. An automorphism $\psi$ is called
{\em inner} if it is generated by an element of $\underline{\mathfrak{inn}}(BFV(E))$. More precisely
we impose that
\begin{itemize}
\item[(a)] there is a morphism of unital graded algebras and Poisson algebras
\begin{align*}
\hat{\psi}: (BFV(E),[\cdot,\cdot]_{BFV}) \to (\widetilde{BFV}(E),[\cdot,\cdot]_{BFV})
\end{align*}
and
\item[(b)] there is $\hat{\gamma} \in \widetilde{BFV}^{0}(E)$
\end{itemize}
such that
\begin{itemize}
\item[(a')] the composition $\psi_t$ of $\hat{\psi}$ with the evaluation at $E\times \{t\}$ is an automorphism
of unital graded Poisson algebras for arbitrary $t\in [0,1]$, $\psi_0=id$, $\psi_1=\psi$,
\item[(b')] for all $s\in [0,1]$ and $\beta \in BFV(E)$
\begin{align}\label{ODE}
\frac{d}{dt}|_{t=s}\left(\psi_t(\beta)\right)=-\left([\gamma_s,\psi_s(\beta)]_{BFV}\right)
\end{align}
holds where $\gamma_s$ denotes the restriction of $\hat{\gamma}$ to $E\times \{s\}\cong E$.
\end{itemize}
We remark that this definition is totally analogous to the definition of Hamiltonian diffeomorphisms given 
in Subsection \ref{ss:Hamiltonianhomotopies} if one replaces the one-parameter family
of diffeomorphisms $(\phi_t)_{t \in [0,1]}$ by the corresponding family of push forwards
\begin{align*} 
\left((\phi_t)_{*}:=(\phi_t^{*})^{-1})\right)_{t\in [0,1]}.
\end{align*}

A {\em smooth one-parameter family of inner automorphisms} of the graded Lie algbera $(BFV(E),[\cdot,\cdot]_{BFV})$ is a morphism
$\hat{\psi}$ such as in (a) satisfying (a') and (b') for some $\hat{\gamma}$ as in (b), except that
$\psi_1$ is not fixed. We denote the set of all smooth one-parameter families of inner automorphisms of
$(BFV(E),[\cdot,\cdot]_{BFV})$ by $\underline{\Inn}(BFV(E))$. 
This set comes along with a natural
group structure and the filtration of
$BFV^{0}(E)$ by the Poisson subalgebras $BFV^{0}_{\ge r}(E)$ induces a filtration of
$\underline{\Inn}(BFV(E))$ by subgroups $\underline{\Inn}_{\ge r}(BFV(E))$.

We denote the group of inner automorphisms by $\Inn(BFV(E))$ and the subgroup
generated by elements of $\underline{\mathfrak{inn}}_{\ge r}(BFV(E))$ by $\Inn_{\ge r}(BFV(E))$.

\begin{Lemma}\label{lemma:integrability0}
Any $\hat{\gamma} \in \underline{\mathfrak{inn}}_{\ge 2}(BFV(E))$ can be integrated to a unique
$\hat{\psi} \in \underline{\Inn}_{\ge 2}(BFV(E))$.
\end{Lemma}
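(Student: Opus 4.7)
The plan is to exploit the finiteness of the resolution-degree filtration on $BFV(E)$ together with the way $[\cdot,\cdot]_{BFV}$ interacts with it. My first step is to establish the following filtration estimate: for any $\gamma\in BFV^{0}_{\ge 2}$ one has
\begin{align*}
[\gamma,\cdot]_{BFV}\bigl(BFV_{\ge q}\bigr)\subseteq BFV_{\ge q+1}.
\end{align*}
This is a strengthening of the fact (noted in the excerpt) that $BFV^{0}_{\ge r}$ is a Poisson subalgebra. It follows from $[\cdot,\cdot]_{BFV}$ being a graded biderivation of total degree zero whose ``canonical'' part (Lemma~\ref{lemma:BFV-bracket}(b)) pairs one $\mathcal{E}$-factor with one $\mathcal{E}^{*}$-factor, hence lowers ghost-momentum degree by exactly one. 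A direct bidegree count shows that bracketing a bihomogeneous element of bidegree $(k,k)$ with $k\ge 2$ against a class of ghost-momentum degree $q$ yields ghost-momentum degree at least $k+q-1\ge q+1$; any Poisson-on-$E$ or connection-induced correction preserves the full bidegree $(k,k)$ of $\gamma$ and therefore gives an even larger jump.

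My second step is to couple this shift with boundedness of the filtration. Since $\mathcal{E}\to E$ has finite rank $r$, the ideal $BFV_{\ge r+1}$ vanishes, so $(\operatorname{ad}_{\gamma})^{r+1}\equiv 0$ for every $\gamma\in BFV^{0}_{\ge 2}$. More generally, every composition $\operatorname{ad}_{\gamma_{s_1}}\cdots\operatorname{ad}_{\gamma_{s_n}}$ arising from $\hat\gamma\in\widetilde{BFV}^{0}_{\ge 2}$ vanishes identically as soon as $n\ge r+1$.

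With this pointwise nilpotency in hand I construct $\hat\psi$ as the time-ordered exponential
\begin{align*}
\psi_t(\beta):=\sum_{n=0}^{r}(-1)^{n}\int_{0\le s_1\le\cdots\le s_n\le t}\operatorname{ad}_{\gamma_{s_n}}\cdots\operatorname{ad}_{\gamma_{s_1}}(\beta)\,ds_1\cdots ds_n.
\end{align*}
Because the sum is finite, smoothness in $t$ and in points of $E$ is automatic, $\psi_0=\operatorname{id}$ holds by inspection, and differentiating in $t$ (using the fundamental theorem on the upper limit of integration) reproduces equation (\ref{ODE}). That each $\psi_t$ is an automorphism of the unital graded Poisson algebra is the standard derivation-flow computation: both $\psi_t(\alpha\beta)-\psi_t(\alpha)\psi_t(\beta)$ and $\psi_t([\alpha,\beta]_{BFV})-[\psi_t(\alpha),\psi_t(\beta)]_{BFV}$ satisfy linear ODEs driven by $\operatorname{ad}_{\gamma_t}$ with vanishing initial data (the second case requiring the Jacobi identity), hence vanish identically. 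By construction $\hat\psi$ lies in $\underline{\Inn}_{\ge 2}(BFV(E))$.

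Uniqueness follows in the same spirit: any two solutions of (\ref{ODE}) with initial value $\operatorname{id}$ differ by a family satisfying a linear ODE driven by $\operatorname{ad}_{\hat\gamma}$ with vanishing initial condition, and nilpotency of the driving operator forces the difference to be zero. The heart of the proof, and really its only nontrivial piece, is the resolution-degree shift of the first step; once pointwise nilpotency of $\operatorname{ad}_{\hat\gamma}$ is secured, everything else reduces to routine ODE manipulations.
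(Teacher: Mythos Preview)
Your proof is correct and follows essentially the same approach as the paper's: both argue that for $\hat\gamma\in\underline{\mathfrak{inn}}_{\ge 2}$ the derivation $[-\gamma_s,\cdot]_{BFV}$ strictly raises ghost-momentum degree and is therefore nilpotent (since the ghost-momentum degree is bounded above by the rank of $\mathcal{E}$), and then construct the flow as a terminating time-ordered exponential. You spell out the filtration estimate and the automorphism verification in more detail than the paper, which simply invokes existence and uniqueness of flows of nilpotent vector fields on a finite-dimensional supermanifold before writing down the same time-ordered exponential formula, but the substance is identical.
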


\begin{proof}
We have to show that the equation \eqref{ODE}
has a unique solution for arbitrary $\beta \in BFV(E)$ on $[0,1]$.
Lemma \ref{lemma:BFV-bracket} and $\hat{\gamma} \in \underline{\mathfrak{inn}}_{\ge 2}(BFV(E))$
imply that $[-\gamma_s,\cdot]_{BFV}$ is nilpotent because the ghost-momentum degree of this derivation is strictly
positive and the ghost-momentum degree is bounded from above. Hence
existence and uniqueness of a global solution of \eqref{ODE} for $\hat{\gamma} \in \underline{\mathfrak{inn}}_{\ge 2}(BFV(E))$
is implied by the existence and uniqueness
of a flow generated by a smooth one-parameter family of nilpotent vector fields on a finite dimensional supermanifold.
The associated smooth family of inner automorphisms can be written down
explicitly as
\begin{align*}
\phi_t(\cdot):=\exp{\left(\int_{0}^{t}[-\gamma_s,\cdot]_{BFV}ds\right)}
\end{align*}
where $\exp$ refers to the time-ordered exponential.
\end{proof}

\subsection{Normalized Maurer-Cartan elements}\label{ss:normalizedMC-elements}

Let $(E,\Pi)$ be a coisotropic vector bundle and consider the associated differential graded Poisson algebra
\begin{align*}
(BFV(E),[\Omega,\cdot]_{BFV},[\cdot,\cdot]_{BFV}).
\end{align*}
The set of {\em Maurer--Cartan elements} of this differential graded Poisson algebra is
\begin{align*}
\mathcal{D}_{\alg}(E,\Pi):=\{\beta \in BFV^{1}(E): [\Omega+\beta,\Omega+\beta]_{BFV}=0\}.
\end{align*}
It is acted upon by the group of inner automorphisms $\Inn(BFV(E))$ of $(BFV(E),[\cdot,\cdot]_{BFV})$
via 
\begin{align*}
(\psi,\beta)\mapsto \psi\cdot \beta:=\psi(\Omega+\beta)-\Omega.
\end{align*}

We added the subscript ``alg'' because the set $\mathcal{D}_{\alg}(E,\Pi)$ contains elements
that do not possess a clear geometric meaning. Similar to the construction of the BFV-charge
$\Omega$ (Proposition \ref{prop:charge} Subsection \ref{ss:BFV-complex}) one has to add a {\em normalization condition} to make contact
to the geometry of the coisotropic vector bundle $(E,\Pi)$. Since
\begin{align*}
\beta \in BFV^{1}(E)=\oplus_{k\ge 1}\Gamma(\wedge^{k}\mathcal{E}\otimes \wedge^{k-1}\mathcal{E}^{*})
\end{align*}
there is a unique component $\beta_0$ of $\beta$ in $\Gamma(\mathcal{E})$. Recall that $\mathcal{E}\to E$ was defined
to be the pull back of $E\to S$ along $E\xrightarrow{p} S$. Consequently we obtain a pull back map
\begin{align*}
p^{*}: \Gamma(E)\to \Gamma(\mathcal{E}).
\end{align*}

\begin{Definition}\label{def:normalizedMC}
Let $(BFV(E),[\Omega,\cdot]_{BFV},[\cdot,\cdot]_{BFV})$ be
a BFV-complex associated to a coisotropic vector bundle $(E,\Pi)$.

The set of {\em normalized Maurer--Cartan elements} $\mathcal{D}_{\nor}(E,\Pi)$ of $(E,\Pi)$ is the set of
all elements $\beta \in \mathcal{D}_{\alg}(E,\Pi)$ such that 
the component $\beta_0$ of $\beta$ in $\Gamma(\mathcal{E})$ coincides with the pull back
$p^{*}(\mu)$ of some section $\mu\in \Gamma(E)$.
\end{Definition}

Observe that the action of $\Inn(BFV(E))$ does not restrict to an action on $\mathcal{D}_{\nor}(E,\Pi)$.
However, the action of $\Inn_{\ge 2}(BFV(E))$ does.
The map
\begin{align*}
L_{\nor}: \mathcal{D}_{\nor}(E,\Pi) \to \Gamma(E), \quad \beta \mapsto \beta_0=p^{*}(\mu) \mapsto -\mu.
\end{align*}
has the following important properties:

\begin{Theorem}\label{thm:normalizedMCelements}
The map $L_{\nor}$ has the following properties:
\begin{itemize}
\item[(a)] it maps onto the set of coisotropic sections $\mathcal{C}(E,\Pi) \subset \Gamma(\mathcal{E})$
(see Definition \ref{def:coisotropicsections} in Subsection \ref{ss:coisotropicsections}),
\item[(b)] it is invariant under the action of $\Inn_{\ge 2}(BFV(E))$ on $\mathcal{D}_{\nor}(E,\Pi)$,
\item[(c)] it induces an isomorphism
\begin{align*}
[L_{\nor}]: \mathcal{D}_{\nor}(E,\Pi) / \Inn_{\ge 2}(BFV(E)) \xrightarrow{\cong} \mathcal{C}(E,\Pi).
\end{align*}
\end{itemize}
\end{Theorem}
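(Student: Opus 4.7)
The plan is to dispatch the three statements in order of difficulty, handling (b) as a resolution-degree count, reducing (a) to an inductive construction in the spirit of Proposition \ref{prop:charge}, and tackling (c) by an iterative gauge-cancellation argument.

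For (b), the key observation is that the BFV-bracket decomposes into two kinds of contributions: the Poisson bracket on $\mathcal{C}^\infty(E)$, which preserves ghost-momentum degree, and the pairing of Lemma \ref{lemma:BFV-bracket}(b) between $\Gamma(\mathcal{E})$ and $\Gamma(\mathcal{E}^*)$, which lowers it by exactly one. Hence, for $\gamma \in BFV^0_{\ge 2}(E)$ and any $\eta \in BFV(E)$, the bracket $[\gamma,\eta]_{BFV}$ and all its iterates lie in $BFV_{\ge 1}(E)$. Consequently any $\psi \in \Inn_{\ge 2}(BFV(E))$ satisfies $\psi\cdot\beta - \beta \in BFV_{\ge 1}(E)$, so the $BFV^{(1,0)}(E) = \Gamma(\mathcal{E})$ component of $\beta$ is untouched, proving $\Inn_{\ge 2}(BFV(E))$-invariance of $L_{\nor}$.

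For (a), I first show the image is contained in $\mathcal{C}(E,\Pi)$. Extract the lowest-order component of the Maurer--Cartan equation for $\Omega + \beta$ living in bidegree $(2,0)$, and restrict it to the graph of $-\mu$ (where $\beta_0 = p^*(\mu)$). All contributions from higher-resolution-degree pieces of $\beta$ vanish on the graph, leaving the condition
\begin{align*}
\left.\{y^a + \mu^a,\; y^b + \mu^b\}_\Pi\right|_{\text{graph}(-\mu)} = 0,
\end{align*}
which is precisely coisotropicity of the graph of $-\mu$. For surjectivity, I reverse this: given $\nu \in \mathcal{C}(E,\Pi)$, set $\beta_0 = -p^*(\nu)$ and construct $\beta_1, \beta_2, \ldots$ inductively so that the MC equation is satisfied at every resolution degree. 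At each step the obstruction is a cycle for the Koszul-type differential $[\Omega_0,\cdot]_{BFV}$, which is acyclic in positive resolution degree since it resolves $\mathcal{C}^\infty(S)$; this is the same mechanism that powered the existence part of Proposition \ref{prop:charge}.

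For (c), given $\beta, \beta' \in \mathcal{D}_{\nor}(E,\Pi)$ with $\beta_0 = \beta'_0$, the strategy is stepwise cancellation. Assume by induction that at step $k\ge 1$ we have moved $\beta$ by an inner automorphism to an element agreeing with $\beta'$ modulo $BFV_{\ge k}(E)$; comparing the two MC equations at the next level yields an element of $\Gamma(\wedge^{k+1}\mathcal{E}\otimes \wedge^k\mathcal{E}^*)$ closed for $[\Omega_0,\cdot]_{BFV}$. Acyclicity of the Koszul resolution furnishes a primitive $\gamma_k \in BFV^0_{\ge k+1}(E)\subset BFV^0_{\ge 2}(E)$, and Lemma \ref{lemma:integrability0} integrates it to $\psi_k \in \Inn_{\ge k+1}(BFV(E))$ that advances the induction. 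Since the resolution degrees of the $\gamma_k$ strictly increase, the infinite composition $\psi = \cdots\psi_2\psi_1$ is well-defined on each finite resolution-degree stratum and belongs to $\Inn_{\ge 2}(BFV(E))$, with $\psi\cdot\beta = \beta'$. The main obstacle throughout (a) and (c) is the cohomological input: establishing that the Koszul-type differential is acyclic in positive resolution degree and that the obstructions produced at each inductive step are actually closed, which is the genuine algebraic content of the theorem and amounts to a systematic deformation-theoretic strengthening of Proposition \ref{prop:charge}.
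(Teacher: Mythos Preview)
The paper does not prove this theorem at all; it simply cites the reference \cite{Schaetz} (``This Theorem was proved in \cite{Schaetz}.''). Your proposal is essentially a sketch of the standard deformation-theoretic argument that one finds in that reference, built on the same mechanism as Proposition~\ref{prop:charge}: filter by ghost-momentum degree and use acyclicity of the Koszul-type differential to kill obstructions step by step. So in spirit you are reproducing what the cited paper does, and your outline for (b), (a) and (c) is sound.

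One technical point to tighten: in parts (a) and (c) you repeatedly invoke acyclicity of ``$[\Omega_0,\cdot]_{BFV}$''. The differential that actually governs the obstruction theory here is not $[\Omega_0,\cdot]_{BFV}$ but the pure pairing differential $\delta[\beta_0]=[\Omega_0+\beta_0,\cdot]_{G}=[\Omega_0+p^{*}(\mu),\cdot]_{G}$; see the discussion surrounding $h[p^{*}(\mu)]$ in Subsection~\ref{ss:geometricMC-elements}. Its cohomology is $\Gamma(\wedge E_{\mu})$, concentrated in ghost-momentum degree zero over the graph $S_{\mu}$ of $-\mu$, which is exactly what makes your inductive steps go through (the obstruction cocycles live in strictly positive ghost-momentum degree, hence are exact). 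Your statement that contributions from higher-resolution pieces ``vanish on the graph'' in (a) is correct precisely because $\Omega_0+p^{*}(\mu)$ vanishes along $S_{\mu}$, so the $[\cdot,\cdot]_{G}$-contributions disappear there; it is worth making this explicit. With that correction, your sketch matches the argument in \cite{Schaetz}.
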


This Theorem was proved in \cite{Schaetz}.

The main aim of the remainder of this paper is to ``lift'' the isomorphism $[L_{\nor}]$ from the level
of sets to the level of appropriate groupoids. On the left-hand side 
$\mathcal{C}(E,\Pi)$ will be replaced by the groupoid of coisotropic sections $\hat{\mathcal{C}}(E,\Pi)$
(see Definition \ref{def:groupoidofcoisotropicsections} in Subsection \ref{ss:groupoidofcoisotropicsections}).
In the following Subsections the right replacement for $\mathcal{D}_{\nor}(E,\Pi)$ with its action of $\Inn_{\ge 2}(BFV(E))$
will be constructed.

\subsection{Geometric Maurer--Cartan elements}\label{ss:geometricMC-elements}

First we prove an extension of Lemma \ref{lemma:integrability0}:

\begin{Lemma}\label{lemma:integrability1}
Any $\hat{\gamma} \in \underline{\mathfrak{inn}}_{\ge 1}(BFV(E))$ can be integrated to a unique
$\hat{\psi} \in \underline{\Inn}_{\ge 1}(BFV(E))$.
\end{Lemma}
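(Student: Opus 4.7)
The plan is to reduce Lemma \ref{lemma:integrability1} to Lemma \ref{lemma:integrability0} by splitting $\hat{\gamma}$ along the filtration by resolution degree. Since
\[
\widetilde{BFV}^{0}_{\ge 1}(E) = \Gamma(\mathcal{E}_{[0,1]} \otimes \mathcal{E}^{*}_{[0,1]}) \oplus \widetilde{BFV}^{0}_{\ge 2}(E),
\]
I would write $\hat{\gamma} = \hat{\gamma}^{(1)} + \hat{\gamma}^{(\ge 2)}$ with $\hat{\gamma}^{(1)} \in \Gamma(\mathcal{E}_{[0,1]} \otimes \mathcal{E}^{*}_{[0,1]})$. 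The idea is to integrate $\hat{\gamma}^{(1)}$ directly and then reduce $\hat{\gamma}^{(\ge 2)}$ to Lemma \ref{lemma:integrability0} via a standard conjugation trick.

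The crux is the resolution-$1$ step. The key observation is that for $\gamma^{(1)}_{s} \in \Gamma(\mathcal{E} \otimes \mathcal{E}^{*})$ the Hamiltonian vector field $X_{-\gamma^{(1)}_{s}} = [-\gamma^{(1)}_{s}, \cdot]_{BFV}$ on the finite-dimensional supermanifold underlying $BFV(E)$ preserves the filtration by resolution degree (a consequence of the bidegrees of the principal part of the BFV-bracket) and, moreover, its body component vanishes: for $f \in \mathcal{C}^{\infty}(E) = BFV^{(0,0)}(E)$, a bidegree count combined with the Leibniz rule shows $P([\gamma^{(1)}_{s}, f]_{BFV}) = 0$. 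Hence $X_{-\gamma^{(1)}_{s}}$ is fiber-preserving over $E$ and each fiber of the underlying super vector bundle is finite-dimensional. By the same supermanifold flow theorem invoked in the proof of Lemma \ref{lemma:integrability0} (now requiring only completeness of the body flow rather than nilpotency) equation \eqref{ODE} admits a unique global smooth solution, producing $\hat{\psi}^{(1)} \in \underline{\Inn}_{\ge 1}(BFV(E))$.

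For the second step I would write the desired $\hat{\psi}$ as $\hat{\psi}^{(1)} \circ \hat{\phi}$. Differentiating and using that each $\psi^{(1)}_{t}$ is a Poisson algebra automorphism, the residual equation for $\hat{\phi}$ reads
\begin{align*}
\frac{d}{dt}\Big|_{t=s}\phi_{t}(\beta) = -[\tilde{\gamma}_{s}, \phi_{s}(\beta)]_{BFV}, \qquad \tilde{\gamma}_{s} := (\psi^{(1)}_{s})^{-1}\bigl(\gamma^{(\ge 2)}_{s}\bigr).
\end{align*}
Since $\hat{\psi}^{(1)}$ is generated by an element of $\widetilde{BFV}^{0}_{\ge 1}$ it preserves the filtration by resolution degree, so the pulled-back family $\hat{\tilde{\gamma}}$ still lies in $\widetilde{BFV}^{0}_{\ge 2}$. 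Lemma \ref{lemma:integrability0} then delivers a unique $\hat{\phi} \in \underline{\Inn}_{\ge 2}(BFV(E))$, and composition yields the required $\hat{\psi} \in \underline{\Inn}_{\ge 1}(BFV(E))$. Uniqueness of $\hat{\psi}$ follows from the linearity of \eqref{ODE} together with the uniqueness obtained at each stage.

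The main obstacle is the non-nilpotency of $[-\gamma^{(1)}_{s}, \cdot]_{BFV}$: the time-ordered exponential formula used in Lemma \ref{lemma:integrability0} no longer terminates in finitely many terms. What rescues the argument is the verticality of the associated Hamiltonian vector field over $E$, which makes the flow problem fiberwise finite-dimensional and hence accessible to classical flow theory on supermanifolds.
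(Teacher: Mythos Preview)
Your argument is correct in outline, but it differs from the paper's proof in how the non-nilpotent part is peeled off, and your first step leans on a stronger input than you acknowledge.

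The paper does \emph{not} integrate the full Hamiltonian derivation $[-\gamma^{(1)}_{s},\cdot]_{BFV}$ of the $(1,1)$-component. Instead it observes that for $\hat{\gamma}=\hat{A}+\hat{\delta}$ with $\hat{A}\in\Gamma(\mathcal{E}_{[0,1]}\otimes\mathcal{E}^{*}_{[0,1]})$ one has a decomposition
\[
[-\gamma_{s},\cdot]_{BFV}=[-A_{s},\cdot]_{G}\;+\;\text{(nilpotent)},
\]
where the first summand is simply the fiberwise linear action of $-A_{s}\in\Gamma(\End(\mathcal{E}))$ on $\wedge\mathcal{E}\otimes\wedge\mathcal{E}^{*}$, and the nilpotent remainder absorbs \emph{both} $[-\delta_{s},\cdot]_{BFV}$ and the higher (non-$G$) contributions of $[-A_{s},\cdot]_{BFV}$. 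The first summand integrates by a linear ODE to a family $\hat{B}\in\Gamma(GL_{+}(\mathcal{E})_{[0,1]})$; conjugating the flow equation by $\hat{B}$ then leaves a nilpotent ODE, handled exactly as in Lemma~\ref{lemma:integrability0}. No supermanifold flow theorem beyond the nilpotent case is needed, and the explicit $\hat{B}\in\Gamma(GL_{+}(\mathcal{E}))$ is reused later (e.g.\ in Lemma~\ref{lemma:geometricMC}).

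Your route---integrate $[-\gamma^{(1)}_{s},\cdot]_{BFV}$ in one piece, then conjugate by the resulting \emph{Poisson} automorphism to land squarely in Lemma~\ref{lemma:integrability0}---is cleaner conceptually, since everything stays Hamiltonian and the reduction to $\widetilde{BFV}^{0}_{\ge 2}$ is immediate. But your first step is not ``the same supermanifold flow theorem invoked in the proof of Lemma~\ref{lemma:integrability0}'': that proof writes down a terminating time-ordered exponential, which is unavailable here precisely because $[-\gamma^{(1)}_{s},\cdot]_{BFV}$ is not nilpotent. You genuinely need the stronger statement that an even vector field on a finite-rank graded manifold with complete (here: zero) body vector field integrates globally. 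That result is standard, so your proof stands, but you should cite it rather than pointing back to Lemma~\ref{lemma:integrability0}.
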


\begin{proof}
The decomposition $BFV^{0}_{\ge 1}(E):=\oplus_{m \ge 1}\Gamma(\wedge^{m}\mathcal{E}\otimes \wedge^{m}\mathcal{E}^{*})$
yields a decomposition of $\hat{\gamma}$ into $\hat{A}+\hat{\delta}$ with 
\begin{align*}
\hat{A} \in \Gamma(\mathcal{E}_{[0,1]}\otimes \mathcal{E}^{*}_{[0,1]})
\end{align*}
and $\hat{\delta} \in \underline{\mathfrak{inn}}_{\ge 2}(BFV(E))$.
Lemma \ref{lemma:BFV-bracket} and $\hat{\gamma} \in \underline{\mathfrak{inn}}_{\ge 1}(BFV(E))$ imply that the derivation
 $[-\gamma_s,\cdot]_{BFV}$ can be written as the sum of $-A_s$ acting on $BFV(E)$ by 
the natural fiber pairing between $\wedge\mathcal{E}$ and $\wedge \mathcal{E}^{*}$ plus a nilpotent derivation.
To be more precise the part of $[-\hat{\gamma},\cdot]_{BFV}$ that might not be nilpotent is given as follows:
$-\hat{A}$ is an element of $\Gamma(\mathcal{E}_{[0,1]}\otimes \mathcal{E}^{*}_{[0,1]})=\Gamma(\End(\mathcal{E})_{[0,1]})$.
Here $\End(\mathcal{E})_{[0,1]}$ is the pull back of $\End(\mathcal{E}) \to E$ along $E\times [0,1]\to E$.
As an element of $\Gamma(\End(\mathcal{E})_{[0,1]})$ the family $-\hat{A}$ acts
on $\Gamma(\mathcal{E})$ and this action naturally extends to an action on $\Gamma(\wedge \mathcal{E} \otimes \wedge \mathcal{E}^{*})$.

The smooth one-parameter family $-\hat{A}$ integrates to a unique one-parameter family of fiberwise linear automorphisms
\begin{align*}
\hat{B} \in \Gamma(GL_{+}(\mathcal{E})_{[0,1]}).
\end{align*}
starting at the identity. Here $GL_{+}(\mathcal{E})\to E$ is the bundle of fiberwise linear automorphisms of the vector bundle
$\mathcal{E} \to E$ which are fiberwise orientation preserving.
The family $\hat{B}$ naturally acts on $\Gamma(\wedge (\mathcal{E} \otimes \mathcal{E}^{*}))$.

A straightforward computation shows that equation \eqref{ODE} is equivalent to
\begin{align*}
\frac{d}{dt}|_{t=s}\varphi_t=\left(B_s^{-1}\circ(A_s(\cdot)-[\gamma_s,\cdot]_{BFV})\circ B_{s}\right)(\varphi_s)
\end{align*}
for $\varphi_s:=B_s^{-1}\circ \psi_s$. The endomorphism $A_s(\cdot)-[\gamma_s,\cdot]_{BFV}$ is nilpotent for all $s\in [0,1]$ and so is
\begin{align*}
B_s^{-1}\circ(A_s(\cdot)-[\gamma_s,\cdot]_{BFV})\circ B_{s}.
\end{align*}
Hence the existence and uniqueness of a flow integrating equation \eqref{ODE} for 
$\hat{\gamma} \in \underline{\mathfrak{inn}}_{\ge 1}(BFV(E))$ is equivalent to the existence
and uniqueness of a flow for a smooth one-parameter family of nilpotent vector fields on a finite dimensional supermanifold.
\end{proof}

\begin{Definition}\label{def:geometricMC}
Let $(BFV(E),[\Omega,\cdot]_{BFV},[\cdot,\cdot]_{BFV})$ be
a BFV-complex associated to a coisotropic vector bundle $(E,\Pi)$.

The set of {\em geometric Maurer-Cartan elements} $\mathcal{D}_{\geo}(E,\Pi)$ of $(E,\Pi)$ is the orbit
of $\mathcal{D}_{\nor}(E,\Pi) \subset \mathcal{D}_{\alg}(E,\Pi)$ under the action of
$\Inn_{\ge 1}(BFV(E))$.
\end{Definition}

\begin{Lemma}\label{lemma:geometricMC}
An element $\beta \in \mathcal{D}_{\alg}(E,\Pi)$ is geometric if and only if
there exists $A \in \Gamma(GL_{+}(\mathcal{E}))$ and $\mu\in \Gamma(E)$ such that
\begin{enumerate}
\item[(a)] $\Omega_{0}+\beta_{0}=A(\Omega_{0} + p^{*}(\mu))$ and
\item[(b)] $-\mu$ is a coisotropic section of $(E,\Pi)$.
\end{enumerate}
Moreover given $\beta \in \mathcal{D}_{\geo}(E,\Pi)$, the associated section $\mu\in \Gamma(E)$ is unique.
We denote it by $\mu_{\beta}$ from now on.
\end{Lemma}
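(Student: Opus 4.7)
The plan is to exploit the bidegree structure of $BFV(E)$: inner automorphisms in $\Inn_{\ge 1}(BFV(E))$ act on the $(1,0)$-component $\Gamma(\mathcal{E})$ precisely through the linear part of their infinitesimal generator, which integrates to an element of $\Gamma(GL_+(\mathcal{E}))$ by Lemma \ref{lemma:integrability1}. For the forward direction, assume $\beta = \psi\cdot \gamma$ with $\psi \in \Inn_{\ge 1}(BFV(E))$ and $\gamma \in \mathcal{D}_{\nor}(E,\Pi)$; Theorem \ref{thm:normalizedMCelements}(a) supplies $\mu \in \Gamma(E)$ with $\gamma_0 = p^*(\mu)$ and $-\mu$ coisotropic, proving (b).

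For (a), decompose the generator $\hat{\gamma}$ of $\hat{\psi}$ as $\hat{A} + \hat{\delta}$ as in the proof of Lemma \ref{lemma:integrability1}, with $\hat{A} \in \Gamma(\mathcal{E}_{[0,1]} \otimes \mathcal{E}^*_{[0,1]})$ of bidegree $(1,1)$ and $\hat{\delta} \in \underline{\mathfrak{inn}}_{\ge 2}(BFV(E))$. Since $[\cdot,\cdot]_{BFV}$ carries bidegree $(-1,-1)$, the bracket with $\hat{\delta}$ strictly raises both ghost and ghost-momentum degrees, while the bracket with $\hat{A}$ preserves bidegree and on $\Gamma(\mathcal{E})$ acts by the natural endomorphism action. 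Projecting equation~\eqref{ODE} to $BFV^{(1,0)}$ therefore reduces to the linear ODE $\tfrac{d}{dt}\sigma_t = -A_t(\sigma_t)$ on $\Gamma(\mathcal{E})$, whose $t=1$ solution is $B_1(\sigma_0)$ with $B_1 \in \Gamma(GL_+(\mathcal{E}))$ the integration provided by Lemma \ref{lemma:integrability1}. Reading off the $(1,0)$-part of $\psi(\Omega+\gamma) = \Omega+\beta$ gives $\Omega_0 + \beta_0 = B_1(\Omega_0 + p^*\mu)$, so setting $A := B_1$ proves (a).

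Conversely, given $(A,\mu)$ satisfying (a) and (b), choose a smooth path $\hat{B}$ in $\Gamma(GL_+(\mathcal{E}))$ from $\id$ to $A$ (using connectedness of the fibres of $GL_+(\mathcal{E}) \to E$) and set $\hat{A}' := -\tfrac{d}{dt}\hat{B}\circ \hat{B}^{-1} \in \underline{\mathfrak{inn}}_{\ge 1}(BFV(E))$. Lemma \ref{lemma:integrability1} integrates $\hat{A}'$ to $\hat{\psi}'$; put $\psi := \psi'_1$. The forward analysis then yields $(\psi^{-1}(\Omega + \beta))_{(1,0)} = A^{-1}(\Omega_0 + \beta_0) = \Omega_0 + p^*\mu$, so $\psi^{-1}\cdot \beta \in \mathcal{D}_{\nor}(E,\Pi)$ and hence $\beta \in \mathcal{D}_{\geo}(E,\Pi)$. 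For uniqueness, let $(A_1,\mu_1), (A_2,\mu_2)$ both satisfy (a) and set $C := A_2^{-1}A_1$, so that $C(\Omega_0 + p^*\mu_1) = \Omega_0 + p^*\mu_2$. Evaluating this identity at the point $-\mu_1(s) \in E_s$ and invoking fibrewise linearity of $C$ yields $0 = C(-\mu_1(s))(0) = -\mu_1(s) + \mu_2(s)$, so $\mu_1 = \mu_2$.

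The main obstacle is the bidegree bookkeeping in the forward direction: one must carefully separate the ``linear'' piece $\hat{A}$ of the generator from the strictly bidegree-raising piece $\hat{\delta}$, and verify that only $\hat{A}$ contributes to the evolution of the $(1,0)$-component. Once this is in place, both the reverse implication and the uniqueness assertion reduce to pointwise identities in $\Gamma(\mathcal{E})$.
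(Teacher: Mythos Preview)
Your proof is correct and takes the same route as the paper's: both extract the $\Gamma(GL_+(\mathcal{E}))$-part of an element of $\Inn_{\ge 1}(BFV(E))$ by projecting the flow equation to bidegree $(1,0)$, invert this for the converse, and recover $\mu$ from the zero locus of $\Omega_0+\beta_0$ (the paper phrases uniqueness as invariance of this zero set under $\Inn_{\ge 1}$; your pointwise evaluation at $-\mu_1(s)$ makes the same observation explicit). One small correction: the bracket $[\cdot,\cdot]_{BFV}$ does \emph{not} carry pure bidegree $(-1,-1)$---only its leading piece $[\cdot,\cdot]_G$ does, while the higher corrections built from $\Pi$ and the curvature of $\nabla$ strictly raise the ghost-momentum degree. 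Your conclusions about $[\hat{\delta},\cdot]_{BFV}$ and $[\hat{A},\cdot]_{BFV}$ survive once stated this way, and this is exactly what is established in the proofs of Lemmas~\ref{lemma:integrability0} and~\ref{lemma:integrability1} that you invoke.
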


\begin{proof}
Let $\beta$ be in $\mathcal{D}_{\geo}(E,\Pi)$. By definition there is $\alpha \in \Inn_{\ge 1}(BFV(E))$
and $\beta' \in \mathcal{D}_{\nor}(E,\Pi)$ such that
\begin{align*}
\Omega + \beta = \alpha(\Omega + \beta').
\end{align*}
The restriction of $\alpha$ to $\Gamma(\mathcal{E})$ yields $A \in \Gamma(GL_{+}(\mathcal{E}))$ and
\begin{align*}
\Omega_{0} + \beta_{0} = A(\Omega_{0} + \beta'_{0})
\end{align*}
holds. It follows from the definition of $\mathcal{D}_{\nor}(E,\Pi)$ that $\beta'_{0}=p^{*}(\mu)$ for some $\mu \in \Gamma(E)$.
Moreover Theorem \ref{thm:normalizedMCelements} implies that $-\mu$ is coisotropic.

On the other hand given  $A \in \Gamma(GL_{+}(\mathcal{E}))$ and $\mu \in \Gamma(E)$ such that (a) and (b) are satisfied,
one can find a smooth one-parameter family
\begin{align*}
\hat{a} \in \widetilde{BFV}^{(1,1)}(E)=\Gamma(\mathcal{E}_{[0,1]}\otimes \mathcal{E}^{*}_{[0,1]})=\Gamma(\End(\mathcal{E})_{[0,1]})
\end{align*}
that generates $A \in \Gamma(GL_{+}(\mathcal{E}))$. The adjoint action of $\hat{a}$ on $BFV(E)$
with respect to $[\cdot,\cdot]_{BFV}$ integrates
to an inner automorphism $\psi$ of $BFV(E)$ according to Lemma \ref{lemma:integrability1}.
The action of $\psi^{-1}$ to $\beta$ yields a Maurer-Cartan element $\beta'$ that satisfies
$\beta'_{0}=p^{*}(\mu)$. Hence $\beta$ is in the orbit of $\mathcal{D}_{\nor}(E,\Pi)$ under the action of $\Inn_{\ge 1}(BFV(E))$.

Uniqueness of $\mu$ follows from the fact that the map which associates to $\beta$ the zero set of $\beta_{0}$
is invariant under the action of $\Inn_{\ge 1}(BFV(E))$ and $\mu$ can be reconstructed from this zero set.
\end{proof}

Let $\beta$ be a geometric Maurer-Cartan element of $(BFV(E),[\cdot,\cdot]_{BFV})$. By Lemma \ref{lemma:geometricMC}
there is a section $A \in \Gamma(GL_{+}(\mathcal{E}))$ such that
\begin{align*}
\Omega_0+\beta_0=A(\Omega_0+p^{*}(\mu))
\end{align*}
for some coisotropic section $-\mu$ of $(E,\Pi)$.

Recall that $\Gamma(\mathcal{E} \oplus \mathcal{E}^{*})$ carries a structure of symmetric pairing
induced from the natural contraction between $\mathcal{E}$ and $\mathcal{E}^{*}$. This extends
to the structure of a graded Poisson algebra on $\Gamma(\wedge \mathcal{E}\otimes \wedge \mathcal{E}^{*})$.
We denoted the corresponding graded Poisson bracket by $[\cdot,\cdot]_{G}$.

The element $\beta$ yields a differential
\begin{align*}
\delta[\beta_0](\cdot):=[\Omega_0+\beta_0,\cdot]_G
\end{align*}
on $BFV(E)=\Gamma(\wedge \mathcal{E}\otimes \wedge \mathcal{E}^{*})$.
The section $A$ acts naturally as an automorphism of $\Gamma(\wedge \mathcal{E}\otimes \wedge \mathcal{E}^{*})$
and one obtains the following commutative diagram of complexes
\begin{align*}
\xymatrix{
BFV(E) \ar[r]^{A\cdot} \ar[d]_{\delta[p^{*}(\mu)]} & BFV(E) \ar[d]^{\delta[\beta_0]} \\
BFV(E) \ar[r]^{A\cdot} & BFV(E)}
\end{align*}
which implies that the complexes $(BFV(E),\delta[\beta_0])$ and $(BFV(E),\delta[p^{*}(\mu)])$ are isomorphic. 
In particular this yields an isomorphism between the cohomologies
$H^{\bullet}(BFV(E),\delta[p^{*}(\mu)])$ and $H^{\bullet}(BFV(E),\delta[\beta_0])$.

We give more details on the computation of $H^{\bullet}(BFV(E),\delta[\beta_0])$ in order to fix a wrong statement in the proof
of Theorem 6. in \cite{Schaetz} (apologies for that). There a homotopy $h$ for $\delta[0]$
was introduced and we claimed that the operator $h$ 
is also a homotopy for $\delta[p^{*}(\mu)]$. This is not true in general, however the main line of arguments in the proof of
Theorem 6. is not effected by this.

The first step is to compute
the cohomology of $H^{\bullet}(BFV(E),\delta:=\delta[0])$. It is well-known that there are natural chain maps
\begin{align*}
i: (\Gamma(\wedge E),0) \hookrightarrow (BFV(E),\delta), \quad pr: (BFV(E),\delta) \to (\Gamma(\wedge E),0).
\end{align*}
Here $i$ is given by extending sections of $\wedge E\to S$ to sections of $\wedge \mathcal{E}\to E$ which are
constant along the fibers of $E\to S$ (recall that $\mathcal{E}\to E$ is the pull back of $E\to S$ along $E\to S$).
Moreover $pr$ is given by the projection $BFV(E)=\Gamma(\wedge \mathcal{E}\otimes \wedge \mathcal{E}^{*}) \to \Gamma(\wedge \mathcal{E})$
followed by restriction to $S$. Furthermore there is a homotopy $h:BFV(E)\to BFV(E)[-1]$ such that
\begin{align*}
h\circ \delta + \delta \circ h = \id - i\circ pr
\end{align*}
and consequently $H^{\bullet}(BFV(E),\delta)\cong \Gamma(\wedge^{\bullet}E)$.  

The next step is to compute $H^{\bullet}(BFV(E),\delta[p^{*}(\mu)])$. Associated to $\mu \in \Gamma(E)$ is a diffeomorphism $\xi[\mu]$ of the
manifold $E$ given by
\begin{align*}
\xi[\mu]: E \xrightarrow{\cong} E, \quad (x,e) \mapsto (x,e+\mu(x)).
\end{align*}
Since it maps any fiber of $E\to S$ to itself, this diffeomorphism induces an isomorphism of vector bundles
\begin{align*}
\tilde{\xi}[\mu]: \wedge \mathcal{E}\otimes \wedge \mathcal{E}^{*} \xrightarrow{\cong} \wedge \mathcal{E}\otimes \wedge \mathcal{E}^{*}
\end{align*}
covering $\xi[\mu]$. One obtains an automorphism of graded algebras
\begin{eqnarray*}
\hat{\xi}[\mu]: \Gamma(\wedge \mathcal{E}\otimes \wedge \mathcal{E}^{*}) &\xrightarrow{\cong}&\Gamma(\wedge \mathcal{E}\otimes \wedge \mathcal{E}^{*})\\
s &\mapsto& (\tilde{\xi}[\mu]^{-1} \circ s \circ \xi[\mu]) 
\end{eqnarray*}
that maps $[\cdot,\cdot]_G$ to itself and $\Omega_0$ to $\Omega_0+p^{*}(\mu)$. Consequently $\hat{\xi}[\mu]$ is an isomorphism
of chain complexes form $(BFV(E),\delta)$ to $(BFV(E),\delta[p^{*}(\mu)])$. It follows that
there are chain maps
\begin{align*}
i_{p^{*}(\mu)}: (\Gamma(\wedge E_{\mu}),0) \hookrightarrow (BFV(E),\delta), \quad \pr_{p^{*}(\mu)}: (BFV(E),\delta) \to (\Gamma(\wedge E_{\mu}),0)
\end{align*}
where $E_{\mu} \to S_{\mu}$ is the vector bundle over the graph of $-\mu$ -- which we denote by $S_{\mu}$ -- given by pulling back
$E\to S$ along the projection $S_{\mu}\xrightarrow{\cong} S$. The chain map $i_{p^{*}(\mu)}$ is given by extending sections constantly along the fibers of $E\to S$ and
$\pr_{p^{*}(\mu)}$ is given by the projection $BFV(E)=\Gamma(\wedge \mathcal{E}\otimes \wedge \mathcal{E}^{*}) \to \Gamma(\wedge \mathcal{E})$
followed by restriction to $S_{\mu}$. The homotopy $h$ for $\delta$ yields a homotopy $h[p^{*}(\mu)]: BFV(E)\to BFV(E)[-1]$ such that
\begin{align*}
h[p^{*}(\mu)]\circ \delta[p^{*}(\mu)] + \delta[p^{*}(\mu)] \circ h[p^{*}(\mu)] = \id - i_{p^{*}(\mu)}\circ \pr_{p^{*}(\mu)}
\end{align*}
holds and consequently $H^{\bullet}(BFV(E),\delta[p^{*}(\mu)])\cong \Gamma(\wedge E_{\mu})$.

The last step is to use the explicit computation of $H^{\bullet}(BFV(E),\delta[p^{*}(\mu)])$ together with the isomorphism of complexes
between $(BFV(E),\delta[\beta_0])$ and $(BFV(E),\delta[p^{*}(\mu)])$ to deduce that there are chain maps
$i_{\beta_0}$ and $\pr_{\beta_0}$ between $(\Gamma(\wedge E_{\mu}),0)$ and $(BFV(E),\delta[\beta])$. Moreover there is a homotopy
$h[\beta_0]:BFV(E)\to BFV(E)[-1]$ such that
\begin{align*}
h[\beta_0]\circ \delta[\beta_0] + \delta[\beta_0] \circ h[\beta_0] = \id - i_{\beta_0}\circ \pr_{\beta_0}
\end{align*}
holds and consequently $H^{\bullet}(BFV(E),\delta[\beta_0])\cong \Gamma(\wedge E_{\mu})$. In particular every cocycle
of $(BFV(E),\delta[\beta_0])$ that is of positive ghost-momentum degree or that vanishes when restricted to $S_{\mu}$ is a coboundary.

Lemma \ref{lemma:geometricMC} allows us to extend the map $L_{\nor}:\mathcal{D}_{\nor}(E,\Pi) \to \Gamma(E)$ to
\begin{align*}
L_{\geo}: \mathcal{D}_{\geo}(E,\Pi) \to \Gamma(E), \quad \beta \mapsto -\mu_{\beta}.
\end{align*}

Theorem \ref{thm:normalizedMCelements} extends in a straightforward fashion to

\begin{Theorem}\label{thm:geometricMCelements}
The map $L_{\geo}$ has the following properties:
\begin{itemize}
\item[(a)] it maps onto the set of coisotropic sections $\mathcal{C}(E,\Pi) \subset \Gamma(\mathcal{E})$
(see Definition \ref{def:coisotropicsections} in Subsection \ref{ss:coisotropicsections}),
\item[(b)] it is invariant under the action of $\Inn_{\ge 1}(BFV(E))$ on $\mathcal{D}_{\geo}(E,\Pi)$,
\item[(c)] it induces an isomorphism
\begin{align*}
[L]: \mathcal{D}_{\geo}(E,\Pi) / \Inn_{\ge 1}(BFV(E)) \xrightarrow{\cong} \mathcal{C}(E,\Pi).
\end{align*}
\end{itemize}
\end{Theorem}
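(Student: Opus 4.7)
The plan is to deduce the theorem directly from Theorem \ref{thm:normalizedMCelements} together with Lemma \ref{lemma:geometricMC}, which already carries almost all of the technical content: the only new ingredients are (i) the observation that $L_{\geo}$ is well-defined and extends $L_{\nor}$, and (ii) a bookkeeping argument comparing the orbits of $\Inn_{\ge 1}(BFV(E))$ on $\mathcal{D}_{\geo}(E,\Pi)$ with the orbits of $\Inn_{\ge 2}(BFV(E))$ on $\mathcal{D}_{\nor}(E,\Pi)$.

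For part (a), I would observe that every normalized Maurer--Cartan element is geometric (take $A = \id$ in Lemma \ref{lemma:geometricMC}) and that on $\mathcal{D}_{\nor}(E,\Pi)$ the map $L_{\geo}$ agrees with $L_{\nor}$. Surjectivity onto $\mathcal{C}(E,\Pi)$ is then immediate from Theorem \ref{thm:normalizedMCelements}(a). Conversely, Lemma \ref{lemma:geometricMC}(b) guarantees that the image of $L_{\geo}$ lands in $\mathcal{C}(E,\Pi)$.

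For part (b), I would use the uniqueness of $\mu_\beta$ spelled out in Lemma \ref{lemma:geometricMC}, together with its characterisation through the zero set of $\beta_0$. Given $\psi \in \Inn_{\ge 1}(BFV(E))$, the restriction of $\psi$ to $\Gamma(\mathcal{E})$ is a section of $GL_+(\mathcal{E})$, so the component $(\psi\cdot\beta)_0 = \psi(\Omega_0+\beta_0) - \Omega_0$ satisfies $\Omega_0 + (\psi\cdot\beta)_0 = A'(\Omega_0 + \beta_0)$ for some $A' \in \Gamma(GL_+(\mathcal{E}))$. Composing with a presentation $\Omega_0 + \beta_0 = A(\Omega_0 + p^*(\mu_\beta))$ from Lemma \ref{lemma:geometricMC}, one obtains $\Omega_0 + (\psi\cdot\beta)_0 = (A'A)(\Omega_0 + p^*(\mu_\beta))$, so by the uniqueness clause of Lemma \ref{lemma:geometricMC} we conclude $\mu_{\psi\cdot\beta} = \mu_\beta$, hence $L_{\geo}(\psi\cdot\beta) = L_{\geo}(\beta)$.

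For part (c), surjectivity of $[L]$ is a consequence of (a) and (b). For injectivity, suppose $\beta,\beta'\in \mathcal{D}_{\geo}(E,\Pi)$ satisfy $L_{\geo}(\beta) = L_{\geo}(\beta')$. By definition of $\mathcal{D}_{\geo}(E,\Pi)$ there exist normalized Maurer--Cartan elements $\beta_n, \beta'_n$ and inner automorphisms $\alpha,\alpha' \in \Inn_{\ge 1}(BFV(E))$ with $\beta = \alpha\cdot\beta_n$ and $\beta' = \alpha'\cdot\beta'_n$. Part (b) yields $L_{\nor}(\beta_n) = L_{\geo}(\beta) = L_{\geo}(\beta') = L_{\nor}(\beta'_n)$, so Theorem \ref{thm:normalizedMCelements}(c) supplies some $\eta \in \Inn_{\ge 2}(BFV(E)) \subseteq \Inn_{\ge 1}(BFV(E))$ with $\beta'_n = \eta\cdot\beta_n$; then $\beta' = (\alpha'\,\eta\,\alpha^{-1})\cdot\beta$ lies in the $\Inn_{\ge 1}(BFV(E))$-orbit of $\beta$. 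The only place some care is needed is to verify that $\mu_\beta$ is genuinely well-defined (which is the uniqueness statement in Lemma \ref{lemma:geometricMC}); once that is in hand, the extension from $\Inn_{\ge 2}$ to $\Inn_{\ge 1}$ is the only conceptual step and it is paid for by precisely the enlargement from $\mathcal{D}_{\nor}$ to $\mathcal{D}_{\geo}$.
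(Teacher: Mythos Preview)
Your proposal is correct and matches the paper's approach: the paper does not give an explicit proof of this theorem but simply states that ``Theorem \ref{thm:normalizedMCelements} extends in a straightforward fashion'' to the geometric setting, and what you have written is precisely that straightforward extension, using Lemma \ref{lemma:geometricMC} (in particular the uniqueness of $\mu_\beta$ via the zero locus of $\beta_0$) to control the passage from $\Inn_{\ge 2}$ acting on $\mathcal{D}_{\nor}$ to $\Inn_{\ge 1}$ acting on $\mathcal{D}_{\geo}$. One minor point of phrasing: when you say ``the restriction of $\psi$ to $\Gamma(\mathcal{E})$ is a section of $GL_+(\mathcal{E})$'', strictly speaking $\psi$ need not preserve $\Gamma(\mathcal{E})=BFV^{(1,0)}(E)$ on the nose, but the $BFV^{(1,0)}$-component of $\psi(\Omega_0+\beta_0)$ is indeed $A'(\Omega_0+\beta_0)$ for some $A'\in\Gamma(GL_+(\mathcal{E}))$, exactly as in the proof of Lemma \ref{lemma:geometricMC}; this is what your argument actually uses, so the logic is sound.
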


At this stage the purpose of Theorem \ref{thm:geometricMCelements} is not clear. However 
it will turn out that the appropriate starting point for the construction of the geometric BFV-groupoid is not
$\mathcal{D}_{\nor}(E,\Pi)$ with its action of $\Inn_{\ge 2}(BFV(E))$ but
$\mathcal{D}_{\geo}(E,\Pi)$ with its action of $\Inn_{\ge 1}(BFV(E))$.

\subsection{Gauge homotopies}\label{ss:gaugehomotopies}
The set of geometric Maurer-Cartan elements $\mathcal{D}_{\geo}(E,\Pi)$ is a subset of
$\mathcal{D}_{\alg}(E,\Pi)$. The latter set is acted upon by the group of inner automorphisms
$\Inn(BFV(E))$ and its subgroups $\Inn_{\ge r}(BFV(E))$.

\begin{Definition}\label{def:gaugehomotopies}
Let $\beta_0$, $\beta_1$ be elements of $\mathcal{D}_{\geo}(E,\Pi)$. A {\em gauge homotopy}
from $\beta_0$ to $\beta_1$ is a pair $(\hat{\beta},\hat{\psi})$ where
\begin{itemize}
\item[(a)] $\hat{\beta}$ is an element of $\widetilde{BFV}^{1}(E)$ such that
its restriction to $E\times \{t\}$ lies in $\mathcal{D}_{\geo}(E,\Pi)$ for arbitrary $t\in [0,1]$ and
\item[(b)] $\hat{\psi}$ is smooth one-parameter family of inner automorphisms, i.e. an element of
$\underline{\Inn}(BFV(E))$
\end{itemize}
such that
\begin{itemize}
\item[(a')] the restriction of $\hat{\beta}$ to $E\times \{0\}$ is $\beta_0$ and the restriction to $E\times \{1\}$ is
$\beta_1$,
\item[(b')] $\beta_t=\psi_{t}(\beta_0)$ holds for all $t \in [0,1]$.
\end{itemize}
A gauge homotopy is called {\em pure} if the associated smooth one-parameter family of inner automorphisms $\hat{\psi}$
is an element of $\underline{\Inn}_{\ge 1}(BFV(E))$.
\end{Definition}
 
We remark that we allow for arbitrary smooth families of inner automorphisms $\hat{\psi}$ of $(BFV(E),[\cdot,\cdot]_{BFV})$
to appear as part of the data for a gauge homotopy, not only those which lie in $\underline{\Inn}_{\ge 1}(BFV(E))$ and
automatically map any geometric Maurer-Cartan element to a geometric Maurer-Cartan element.
Condition (b') in the definition of gauge homotopies 
essentially says that the ``time-dependent vector field'' generating
the gauge homotopy is tangential to $\mathcal{D}_{\geo}(E,\Pi)$ along the path $\hat{\beta}$ contained
in the subset $\mathcal{D}_{\geo}(E,\Pi)$.

\begin{Lemma}\label{lemma:gaugehomotopy}
The relation on $\mathcal{D}_{\geo}(E,\Pi)$ given by
\begin{align*}
\left(\beta \sim_G \delta \right) :\Leftrightarrow \quad \text{there is a gauge homotopy from } \beta \text{ to } \delta
\end{align*}
is an equivalence relation.
\end{Lemma}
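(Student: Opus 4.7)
The plan is to follow exactly the same three-step template as the proof of Lemma \ref{lemma:Hamiltonianhomotopy}, but with Hamiltonian data replaced by gauge data. Throughout, the key observation is that condition (b$'$) in Definition \ref{def:gaugehomotopies} means the path $\hat{\beta}$ is entirely recovered from $\beta_0$ and $\hat{\psi}$, so all constructions reduce to manipulating smooth one-parameter families of inner automorphisms and checking that the resulting paths remain in $\mathcal{D}_{\geo}(E,\Pi)$.

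For reflexivity, given $\beta \in \mathcal{D}_{\geo}(E,\Pi)$, the pair $((\beta)_{t\in [0,1]}, (\id)_{t\in[0,1]})$ generated by $\hat{\gamma}\equiv 0$ is a gauge homotopy from $\beta$ to $\beta$; we denote it by $\id_\beta$. For symmetry, starting from a gauge homotopy $(\hat{\beta},\hat{\psi})$ from $\beta_0$ to $\beta_1$ generated by $\hat{\gamma}$, I would set $\tilde{\psi}_t:=\psi_{1-t}\circ \psi_1^{-1}$ and $\tilde{\beta}_t:=\beta_{1-t}$. A direct differentiation shows that $\tilde{\psi}$ solves the defining ODE \eqref{ODE} with generator $\tilde{\gamma}_s:=-\gamma_{1-s}$, hence lies in $\underline{\Inn}(BFV(E))$; moreover $\tilde{\psi}_0=\id$, $\tilde{\beta}_0=\beta_1$, $\tilde{\beta}_1=\beta_0$, and $\tilde{\beta}_t=\tilde{\psi}_t(\beta_1)$. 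Since $\hat{\beta}$ was required to lie in $\mathcal{D}_{\geo}(E,\Pi)$ for every $t$, so does its time-reversal. Thus $(\tilde{\beta},\tilde{\psi})$ is a gauge homotopy from $\beta_1$ to $\beta_0$, which we denote $(\hat{\beta},\hat{\psi})^{-1}$.

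For transitivity, given $(\hat{\alpha},\hat{\phi})$ from $\beta$ to $\gamma$ (generated by $\hat{\eta}$) and $(\hat{\delta},\hat{\chi})$ from $\gamma$ to $\omega$ (generated by $\hat{\zeta}$), I would fix a gluing function $\rho$ as in the proof of Lemma \ref{lemma:Hamiltonianhomotopy} and define the concatenation by
\begin{align*}
(\hat{\chi}\Box_\rho\hat{\phi})_t:=\begin{cases} \phi_{2\rho(t)} & 0\le t \le 1/3 \\ \phi_1 & 1/3 \le t \le 2/3 \\ \chi_{2\rho(t)-1}\circ \phi_1 & 2/3\le t \le 1 \end{cases}
\end{align*}
and $(\hat{\delta}\Box_\rho\hat{\alpha})_t:=(\hat{\chi}\Box_\rho\hat{\phi})_t(\beta)$. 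The reparametrization by $\rho$ (which is constant on $[1/3,2/3]$) guarantees smoothness of the resulting family across the gluing region, and the corresponding generator is the piecewise-defined time-dependent element $2\rho'(t)\hat{\eta}_{2\rho(t)}$ on $[0,1/3]$, zero on $[1/3,2/3]$, and $2\rho'(t)\hat{\zeta}_{2\rho(t)-1}$ on $[2/3,1]$, which is smooth and lives in $\widetilde{BFV}^0(E)$. By construction the path of Maurer--Cartan elements stays inside $\mathcal{D}_{\geo}(E,\Pi)$ (it is the concatenation of two such paths), so the result is a gauge homotopy from $\beta$ to $\omega$.

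The main subtlety I expect is verifying that the concatenated family $\hat{\chi}\Box_\rho\hat{\phi}$ is genuinely smooth at the junctions $t=1/3$ and $t=2/3$ and is still generated by some smooth time-dependent element of $\widetilde{BFV}^0(E)$: this is exactly the role of the plateau of $\rho$ at the value $1/2$, which forces all time-derivatives of the generator to vanish on $[1/3,2/3]$ and matches with the constant pieces $\phi_1$ and $\chi_0\circ \phi_1=\phi_1$. Everything else is a formal repetition of the Hamiltonian-homotopy argument, with the algebraic relation $\beta_t=\psi_t(\beta_0)$ replacing the geometric relation between graphs of sections and $\phi_t$.
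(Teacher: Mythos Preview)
Your proposal is correct and follows exactly the same approach as the paper, which simply states that the proof is essentially the same as that of Lemma \ref{lemma:Hamiltonianhomotopy} and names the constructions $\id_{\beta}$, $(\hat{\beta},\hat{\psi})^{-1}$, and $\Box_{\rho}$. You have in fact supplied more detail than the paper does, in particular the explicit generator $\tilde{\gamma}_s=-\gamma_{1-s}$ for the inverse and the piecewise generator for the concatenation together with the smoothness discussion at the junctions.
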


The proof is essentially the same as the proof of Lemma \ref{lemma:Hamiltonianhomotopy} in Subsection \ref{ss:Hamiltonianhomotopies}.
In particular every geometric Maurer-Cartan element comes along with a fixed gauge homotopy $\id_{\beta}$ and
for every gauge homotopy $(\hat{\beta},\hat{\psi})$ from $\beta_0$ to $\beta_1$ there is an associated one
from $\beta_1$ to $\beta_0$ which we denote by $(\hat{\beta},\hat{\psi})^{-1}$.
Furthermore every choice of a gluing function $\rho$ defines an operation $\Box_{\rho}$ which maps two gauge homotopies
$(\hat{\alpha},\hat{\phi})$ and $(\hat{\beta},\hat{\psi})$ 
with matching data attached to the boundary points $\{1\}$ and $\{0\}$ respectively to a new gauge homotopy
denoted by $(\hat{\beta},\hat{\psi})\Box_{\rho}(\hat{\alpha},\hat{\psi})$.

\begin{Definition}\label{def:moduliofgeometricMCelements}
Let $(E,\Pi)$ be a coisotropic vector bundle.
We denote the set of equivalence classes in $\mathcal{D}_{\geo}(E,\Pi)$ with respect to $\sim_{G}$
by $\mathcal{N}(E,\Pi)$ and call it the set of {\em geometric Maurer-Cartan elements} of $(E,\Pi)$ {\em modulo gauge homotopies}
or the {\em moduli space of geometric Maurer-Cartan elements} of $(E,\Pi)$.
\end{Definition}

\subsection{The geometric BFV-groupoid}\label{ss:geometricBFV-groupoid}

We want to construct a groupoid whose set of orbits is equal to the moduli space of geometric Maurer-Cartan elements 
$\mathcal{N}(E,\Pi)$. The main problem is that there is no ``natural'' composition on the set of gauge homotopies
(with matching data at the end of the first one and at the beginning of the second one respectively). The operation
$\Box_{\rho}$ depends on the choice of a gluing function $\rho$ and is not associative. To overcome this problems we
introduce an equivalence relation on the set of gauge homotopies:

\begin{Definition}\label{def:isotopiesofgaugehomotopies}
Let $(E,\Pi)$ be a coisotropic vector bundle.
An {\em isotopy of gauge homotopies} is a pair $(\hat{\beta},\hat{\Psi})$ where
\begin{itemize}
\item[(a)] $\hat{\beta}$ is a section of the pull back $\wedge \mathcal{E}_{[0,1]^{2}}\otimes \wedge \mathcal{E}^{*}_{[0,1]^{2}}$
 of $\wedge \mathcal{E}\otimes \wedge \mathcal{E}^{*}$ along 
$E\times [0,1]^{2}\to E$ such that its restriction to $E\times \{t\} \times \{s\}$ lies in
$\mathcal{D}_{\geo}(E,\Pi)$ for arbitrary $t,s\in [0,1]$ and 
\item[(b)] $\hat{\Psi}: BFV(E) \to \Gamma(\wedge \mathcal{E}_{[0,1]^{2}}\otimes \wedge \mathcal{E}^{*}_{[0,1]^{2}})$
is a morphism of unital graded Poisson algebras whose composition with the evaluation at $E\times \{t\}\times \{s\}$ 
for $t,s\in [0,1]$ arbitrary is
an automorphism of unital graded Poisson algebras 
\end{itemize}
such that
\begin{itemize}
\item[(a')] the composition of $\hat{\Psi}$ with the evaluation at $E\times \{0\}\times [0,1]$ is the identity,
\item[(b')] the restriction of $\hat{\beta}$ to $E\times \{1\}\times [0,1]$ is constant in $s\in [0,1]$, 
\item[(c')] there is a section $\hat{\gamma}$ of the bundle
\begin{align*}
\oplus_{m\ge 0}\left(\wedge^{m} \mathcal{E}_{[0,1]^{2}}\otimes \wedge^{m} \mathcal{E}^{*}_{[0,1]^{2}} \right)
\end{align*}
such that the composition of $\hat{\Psi}$ with the evaluation at $E\times [0,1] \times \{s\}$ for arbitrary $s\in [0,1]$
is the gauge homotopy generated by
the restriction of $\hat{\gamma}$ to $E\times I \times \{s\}$ and
\item[(d')] the image of $\beta_{0,s}$ under the composition of $\hat{\Psi}$
with evaluation at $E\times \{t\} \times \{s\}$ is equal to $\beta_{t,s}$ for all $s,t \in [0,1]$.
\end{itemize}

We say that an isotopy of gauge homotopies $(\hat{\beta},\hat{\Psi})$ starts at the gauge homotopy
$(\hat{\beta}|_{E\times [0,1] \times \{0\}},ev_{s=0}\circ\hat{\Psi})$ and ends at the gauge homotopy\\
$(\hat{\beta}|_{E\times [0,1] \times \{1\}},ev_{s=1}\circ\hat{\Psi})$.
\end{Definition}

\begin{Lemma}\label{lemma:isotopiesofgaugehomotopies}
\begin{itemize}
\item[(a)]
The relation on the set of gauge homotopies given by
\begin{align*}
(\hat{\alpha},\hat{\phi}) \simeq_{G} (\hat{\beta},\hat{\psi}) :\Leftrightarrow
\end{align*}
there is an isotopy of gauge homotopies from $(\hat{\alpha},\hat{\phi})$ to $(\hat{\beta},\hat{\psi})$;\\
defines an equivalence relation on the set of gauge homotopies.
\item[(b)] Let $\rho$ and $\rho'$ be two gluing functions. Then the compositions
of gauge homotopies with respect to $\rho$ and $\rho'$ coincide
up to $\simeq_{G}$.
\item[(c)] The gauge homotopies
\begin{align*}
\id_{\alpha_0}\Box_{\rho}(\hat{\alpha},\hat{\phi}) \text{ and } (\hat{\alpha},\hat{\phi})\Box_{\rho}\id_{\alpha_1}
\end{align*}
are equivalent to $(\hat{\alpha},\hat{\phi})$.
\item[(d)]
The Hamiltonian homotopies 
\begin{align*}
(\hat{\alpha},\hat{\phi})^{-1}\Box_{\rho}(\hat{\alpha},\hat{\phi}) \text{ and } (\hat{\alpha},\hat{\phi})\Box_{\rho}(\hat{\alpha},\hat{\phi})^{-1}
\end{align*}
are equivalent to $\id_{\alpha_{0}}$.
\item[(e)]
The operation $\Box_{\rho}$
descends to the set of gauge homotopies modulo isotopies of Hamiltonian homotopies and is associative there.
\end{itemize}
\end{Lemma}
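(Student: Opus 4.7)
The plan is to imitate, almost line by line, the proof of Lemma \ref{lemma:isotopiesofHamiltonianhomotopies}, replacing Hamiltonian diffeomorphisms by inner automorphisms of $(BFV(E),[\cdot,\cdot]_{BFV})$ and coisotropic sections by geometric Maurer--Cartan elements. The algebraic set-up is structurally parallel: smooth one-parameter families of inner automorphisms form a group $\underline{\Inn}(BFV(E))$ just like $\underline{\Ham}(E,\Pi)$ does, and condition (c') in Definition \ref{def:isotopiesofgaugehomotopies} plays exactly the role that condition (c') of Definition \ref{def:isotopiesofHamiltonianhomotopies} plays in the geometric setting. The main point to verify throughout is that every reparametrization or rescaling of the ``time variable'' $t$ by a function that is the identity near $t=0$ and $t=1$ can be realized as an isotopy of gauge homotopies, and that two isotopies of gauge homotopies whose boundary data match along an edge $I\times\{1\}$ or $I\times\{0\}$ can be glued.

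For (a), reflexivity is witnessed by the constant isotopy, symmetry by reversing the second parameter combined with post-composition by the inverse of the endpoint automorphism (mirroring the symmetry construction in Lemma \ref{lemma:Hamiltonianhomotopy}), and transitivity by an affine gluing as in part (b). For (b), I take a smooth function $\tau\colon[0,1]\to[0,1]$ supported as in the earlier proof and use the affine family of gluing functions $\hat\rho(s):=(1-\tau(s))\rho+\tau(s)\rho'$; this gives a genuine one-parameter family of gluing functions, so applying $\Box_{\hat\rho(s)}$ to the pair of gauge homotopies yields an isotopy interpolating between $\Box_\rho$ and $\Box_{\rho'}$. Here the key check is that the resulting family of inner automorphisms is smooth in both parameters and that the associated path of Maurer--Cartan elements stays in $\mathcal{D}_{\geo}(E,\Pi)$; the former is immediate from the construction and the latter follows because condition (b') guarantees that the path is obtained by applying a smoothly varying automorphism to the fixed starting element $\beta_0$, and geometric Maurer--Cartan elements form a union of $\Inn_{\ge 1}$-orbits inside $\mathcal{D}_{\alg}(E,\Pi)$.

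For (c) and (e), the reparametrization trick of the earlier proof carries over: I compose with a one-parameter family of diffeomorphisms $\kappa_s$ of $[0,1]$ relative to the boundary, and observe that such a reparametrization acts on a gauge homotopy $(\hat\beta,\hat\psi)$ by pullback along $\kappa_s$, yielding another gauge homotopy whose generating element is obtained by rescaling $\hat\gamma$ accordingly. Associativity is then proved by reparametrizing $A$ and $B$ so that the three constituent gauge homotopies sit on disjoint subintervals in a common way, and connecting the two reparametrizations by an affine interpolation as in part (b). For (d), I mimic the explicit ``contracting'' isotopy of the earlier proof: pick $\sigma\colon[0,1]\to[0,1]$ equal to $1$ on $[0,1/3]$ and $0$ on $[2/3,1]$, and consider the family obtained by replacing the time variable $t$ in $(\hat\beta,\hat\psi)^{-1}\Box_\rho(\hat\beta,\hat\psi)$ by $t\cdot\sigma(s)$; this shrinks the loop to the constant gauge homotopy $\id_{\beta_0}$.

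The main obstacle I anticipate is not conceptual but bookkeeping, and it has two aspects. First, unlike Hamiltonian diffeomorphisms, inner automorphisms are only defined through an ODE on the infinite-dimensional graded algebra $BFV(E)$, so I must check that all the reparametrizations and gluings performed above correspond to genuine elements of $\underline{\Inn}(BFV(E))$; this reduces to exhibiting the generating section $\hat\gamma$ in each case, which can be read off from the rescaled time derivative of the constructed family. Second, I must make sure the path of Maurer--Cartan elements stays in $\mathcal{D}_{\geo}(E,\Pi)$ throughout every constructed isotopy; this is ensured because at each stage the path is of the form $\psi_{t,s}(\beta_{0,s})$ for an appropriate two-parameter family of inner automorphisms and a fixed starting element in $\mathcal{D}_{\geo}(E,\Pi)$, and the arguments of Lemma \ref{lemma:geometricMC} show that $\mathcal{D}_{\geo}(E,\Pi)$ is invariant under arbitrary smooth deformations of this form. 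Once these two verifications are in place, the combinatorics of gluing, reparametrization and contraction go through verbatim from the proof of Lemma \ref{lemma:isotopiesofHamiltonianhomotopies}.
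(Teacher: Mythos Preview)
Your proposal is correct and matches the paper's approach exactly: the paper's own proof is the single sentence ``The proof can be copied mutatis mutandis from the proof of Lemma \ref{lemma:isotopiesofHamiltonianhomotopies},'' and your plan is precisely that transcription. One small remark: your second ``bookkeeping'' worry about the path staying in $\mathcal{D}_{\geo}(E,\Pi)$ is actually trivial, since condition (a) in Definition \ref{def:gaugehomotopies} already requires each $\beta_t$ to be geometric, and every construction you perform (reparametrization, gluing, contraction) just samples the given family at new values of $t$.
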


The proof can be copied mutatis mutandis from the proof of Lemma \ref{lemma:isotopiesofHamiltonianhomotopies}
in Subsection \ref{ss:groupoidofcoisotropicsections}.

\begin{Definition}\label{def:geometricBFV-groupoid}
The {\em geometric BFV-groupoid} $\hat{\mathcal{D}}_{\geo}(E,\Pi)$ associated to a coisotropic vector bundle $(E,\Pi)$
is the small groupoid where
\begin{itemize}
\item[(a)] the set of objects is the set $\mathcal{D}_{\geo}(E,\Pi)$ of all geometric Maurer-Cartan elements
of $(BFV(E),[\Omega,\cdot]_{BFV},[\cdot,\cdot]_{BFV})$,
\item[(b)] the set of morphisms $\Hom(\beta,\delta)$ between two geometric Maurer-Cartan elements $\beta$
and $\delta$ is the set of all gauge homotopies form $\beta$ to $\delta$ modulo isotopies of gauge homotopies and
\item[(c)] the composition is induced from coposition of gauge homotopies with respect to some gluing function.
\end{itemize}
\end{Definition}

The geometric BFV-groupoid $\hat{\mathcal{D}}_{\geo}(E,\Pi)$ can be seen as the restriction of a
groupoid of Maurer-Cartan elements $\hat{\mathcal{D}}_{\alg}(E,\Pi)$ associated to the differential graded Poisson algebra
$(BFV(E),[\Omega,\cdot]_{BFV},[\cdot,\cdot]_{BFV})$ with morphisms given
by gauge homotopies modulo isotopies. It seems very likely that the groupoid $\hat{\mathcal{D}}_{\geo}(E,\Pi)$ can be understood
as the truncation of a weak $\infty$-groupoid $\hat{\mathcal{D}}^{\infty}_{\geo}(E,\Pi)$ at its two-morphisms which should be given by
isotopies of gauge homotopies.
The set of orbits of $\hat{\mathcal{D}}_{\geo}(E,\Pi)$ is the moduli space $\mathcal{N}(E,\Pi)$ of geometric Maurer-Cartan elements
of $(E,\Pi)$ modulo gauge homotopies.

\subsection{The BFV-groupoid}\label{ss:BFV-groupoid}

\begin{Definition}\label{def:pureclasses}
A morphism in the groupoid $\hat{\mathcal{D}}_{\geo}(E,\Pi)$ is called {\em pure} if there is a pure gauge homotopy
(Definition \ref{def:gaugehomotopies}) representing it.

We denote the set of pure morphisms between two geometric Maurer-Cartan elements $\beta$ and $\delta$ of $(E,\Pi)$ by
$Hom_{\ge 1}(\beta,\delta)\subset \Hom(\beta,\delta)$.
\end{Definition}

\begin{Definition}
Let $\mathcal{G}$ be a small groupoid. 

A subgroupoid $\mathcal{H}$ is
\begin{itemize}
\item[(a)] {\em full} if every object in $\mathcal{G}$ is an object in $\mathcal{H}$.
\item[(b)] {\em normal} if for every morphism $f$ from $X$ to $Y$ in $\mathcal{G}$ and every morphism $g$ from $Y$ to $Y$ in $\mathcal{H}$,
$f^{-1}\circ g \circ f$ is a morphism in $\mathcal{H}$.
\end{itemize}
\end{Definition}

\begin{Definition}\label{def:quotientgroupoid}
Let $\mathcal{G}$ be a small groupoid and $\mathcal{H}$ a full normal subgroupoid of $\mathcal{G}$.
Then the {\em quotient} of $\mathcal{G}$ by $\mathcal{H}$ is the groupoid where
\begin{itemize}
\item[(a')] objects $[X]$ are equivalence classes of objects of $\mathcal{G}$ with the relation $[X] = [Y]$ if $\Hom_{\mathcal{H}}(X,Y)\neq \emptyset$.
\item[(b')] morphisms $[\alpha]$ are equivalence classes of morphisms of $\mathcal{G}$ with the relation $[\alpha] = [\beta]$ for
$\alpha \in \Hom_{\mathcal{G}}(X,Y)$, $\beta \in \Hom_{\mathcal{G}}(W,Z)$ if there are $f\in \Hom_{\mathcal{H}}(X,W)$ and $g \in \Hom_{\mathcal{H}}(Y,Z)$ such that $g\circ \alpha =\beta \circ f$ holds.
\end{itemize}
\end{Definition}

\begin{Theorem}\label{thm:normalsubgroupoid}
The class of pure morphisms in $\hat{\mathcal{D}}_{\geo}(E,\Pi)$ yields a full normal subgroupoid of $\hat{\mathcal{D}}_{\geo}(E,\Pi)$.
\end{Theorem}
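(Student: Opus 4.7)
The plan has four steps: fullness, closure under identity/inverse/composition, a Lie-ideal property that controls conjugation, and an isotopy argument to conclude normality.

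Step 1 (fullness and closure). The identity morphism $\id_{\beta}$ is represented by $(\hat{\beta}\equiv\beta,\hat{\psi}\equiv\id)$ with $\hat{\psi}$ generated by $0\in \widetilde{BFV}^{0}_{\ge 1}(E)$, which is pure; consequently the pure subgroupoid is full. If $(\hat{\beta},\hat{\psi})$ is pure, its inverse and its $\Box_{\rho}$-composition with another pure gauge homotopy involve only inversion, composition and reparametrization of elements of $\underline{\Inn}_{\ge 1}(BFV(E))$, which is a subgroup of $\underline{\Inn}(BFV(E))$, and are therefore pure. These closure properties descend to the level of isotopy classes via Lemma~\ref{lemma:isotopiesofgaugehomotopies}.

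Step 2 (Lie-ideal property). The main algebraic ingredient is that $BFV^{0}_{\ge 1}(E)$ is a Lie ideal in $(BFV^{0}(E),[\cdot,\cdot]_{BFV})$. The inclusion $[BFV^{0}_{\ge 1},BFV^{0}_{\ge 1}]_{BFV}\subseteq BFV^{0}_{\ge 1}$ follows because these pieces are already Poisson subalgebras, as recorded in Subsection~\ref{ss:gaugeaction}. For $[\mathcal{C}^{\infty}(E),BFV^{0}_{\ge 1}]_{BFV}\subseteq BFV^{0}_{\ge 1}$ I would use the bigraded Leibniz rule together with the fact, inherent in the construction of the BFV-bracket recalled in Subsection~\ref{ss:BFV-complex}, that for $f\in \mathcal{C}^{\infty}(E)$ the derivation $[f,\cdot]_{BFV}$ acts on the generating bundles $\Gamma(\mathcal{E})$ and $\Gamma(\mathcal{E}^{*})$ by covariant Lie derivatives along $X_{f}$, which preserve bidegree. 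Extending by derivations, $[f,\cdot]_{BFV}$ then maps every $\Gamma(\wedge^{m}\mathcal{E}\otimes\wedge^{m}\mathcal{E}^{*})$ into itself for $m\ge 1$.

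Step 3 (conjugation gives a pure representative). Let $(\hat{\mu},\hat{\phi})$ represent $[f]\in \Hom(\beta_{0},\beta_{1})$ and let $(\hat{\nu},\hat{\sigma})$ represent the pure morphism $[g]\in \Hom_{\ge 1}(\beta_{1},\beta_{1})$, with $\hat{\sigma}$ generated by $\hat{\tau}\in \widetilde{BFV}^{0}_{\ge 1}(E)$. By Step~2 the pushed generator $\phi_{1}^{-1}(\hat{\tau})$ again lies in $\widetilde{BFV}^{0}_{\ge 1}(E)$, so by Lemma~\ref{lemma:integrability1} the smooth one-parameter family $\hat{\eta}_{t}:=\phi_{1}^{-1}\circ\sigma_{t}\circ\phi_{1}$ lies in $\underline{\Inn}_{\ge 1}(BFV(E))$. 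Since $\phi_{1}(\beta_{0}+\Omega)=\beta_{1}+\Omega$ and $\sigma_{1}(\beta_{1}+\Omega)=\beta_{1}+\Omega$, the pair $(\hat{\beta}^{\eta}_{t}:=\eta_{t}(\beta_{0}+\Omega)-\Omega,\hat{\eta})$ is a pure gauge homotopy from $\beta_{0}$ to itself.

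Step 4 (isotopy to the naive composite). It remains to exhibit an isotopy of gauge homotopies from $\hat{\phi}^{-1}\Box_{\rho}\hat{\sigma}\Box_{\rho}\hat{\phi}$ to $(\hat{\beta}^{\eta},\hat{\eta})$. I would construct this as a two-parameter deformation: at parameter $s\in [0,1]$, shrink the two $\hat{\phi}$-pieces of the naive composite so that they reach only $\phi_{1-s}$, and simultaneously replace the middle $\hat{\sigma}$-piece by its conjugate by $\phi_{1-s}$. At $s=0$ one recovers the naive composite, and at $s=1$ the $\phi$-parts collapse to the identity, leaving $\hat{\eta}$ up to a gluing reparametrization absorbed by Lemma~\ref{lemma:isotopiesofgaugehomotopies}(b). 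Conditions (a')-(d') of Definition~\ref{def:isotopiesofgaugehomotopies} hold automatically, because at every $s$ the trajectory is an $\underline{\Inn}(BFV(E))$-translate of $\beta_{0}$ and therefore stays in $\mathcal{D}_{\geo}(E,\Pi)$. The main obstacle is the Lie-ideal property of Step~2; once this is in place Steps~3 and~4 are essentially a careful bookkeeping of two-parameter families.
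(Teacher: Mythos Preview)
Your approach is genuinely different from the paper's. The paper does not verify normality directly; instead it postpones the proof and derives it from Proposition~\ref{prop:kernel}, which identifies the class of pure morphisms with the kernel of the groupoid morphism $\mathcal{L}_{\geo}:\hat{\mathcal{D}}_{\geo}(E,\Pi)\to\hat{\mathcal{C}}(E,\Pi)$. Since the kernel of any morphism of groupoids is automatically a full normal subgroupoid, normality is then immediate. This indirection buys exactly the avoidance of the delicate verification you attempt in Step~4.

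There is a genuine gap in your Step~4. You write that ``at every $s$ the trajectory is an $\underline{\Inn}(BFV(E))$-translate of $\beta_{0}$ and therefore stays in $\mathcal{D}_{\geo}(E,\Pi)$''. This inference is false: the action of $\Inn(BFV(E))$ does \emph{not} preserve $\mathcal{D}_{\geo}(E,\Pi)$; only $\Inn_{\ge 1}(BFV(E))$ does (Definition~\ref{def:geometricMC}). This is precisely why Definition~\ref{def:gaugehomotopies} imposes condition~(a) as a separate requirement and not as a consequence of~(b'). So producing a two-parameter family in $\underline{\Inn}(BFV(E))$ is not enough; you must still check that the resulting family of Maurer--Cartan elements is geometric, and you have not done so. There is also a parametrization slip: with the middle piece conjugated by $\phi_{1-s}$ and the outer pieces shrunk to $\phi_{1-s}$, at $s=1$ you land on $\hat{\sigma}$ rather than $\hat{\eta}$, and the endpoint at $t=1$ then varies with $s$, violating condition~(b') of Definition~\ref{def:isotopiesofgaugehomotopies}.

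Your Lie-ideal observation from Step~2 can in fact be used to repair Step~4, but more work is needed. One should factor the cumulative automorphism along the composite as $A_{t}\circ B_{t}$ with $B_{t}=\phi_{r(t)}$ a reparametrization of $\hat{\phi}$ and $A_{t}\in\Inn_{\ge 1}(BFV(E))$ (the third segment requires writing $\phi_{r}\circ\phi_{1}^{-1}\circ\sigma_{1}\circ\phi_{1}=(\phi_{r}\circ\phi_{1}^{-1}\circ\sigma_{1}\circ\phi_{1}\circ\phi_{r}^{-1})\circ\phi_{r}$ and invoking the ideal property for the bracketed factor). One can then shrink only the $B$-factor to the identity; since $B_{t}^{(s)}\cdot\beta_{0}$ is always a point on the original path $\hat{\mu}$ and $A_{t}$ preserves geometricity, every intermediate element is geometric, and the endpoint stays fixed at $\eta_{1}\cdot\beta_{0}$. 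This is the argument your sketch is reaching for, but it is not the ``bookkeeping'' you describe; it hinges again on Step~2 in a way your Step~4 does not acknowledge.
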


\begin{proof}
The crucial point is normality of this full subgroupoid. This is an immediate consequence of Proposition \ref{prop:kernel}
in Subsection \ref{ss:morphism}.
\end{proof}

\begin{Definition}\label{def:BFV-groupoid}
The {\em BFV-groupoid} $\hat{\mathcal{D}}(E,\Pi)$ associated to a coisotropic vector bundle $(E,\Pi)$
is the quotient of $\hat{\mathcal{D}}_{\geo}(E,\Pi)$ by the class of pure morphisms. We denote its
set of objects by $\mathcal{D}(E,\Pi)$.
\end{Definition}

\begin{Lemma}\label{lemma:objectsofBFV-groupoid}
The set of objects $\mathcal{D}(E,\Pi)$ of $\hat{\mathcal{D}}(E,\Pi)$ is the set of orbits of the action of 
$\Inn_{\ge 1}(BFV(E))$ on $\mathcal{D}_{\geo}(E,\Pi)$.
\end{Lemma}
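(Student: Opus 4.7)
The plan is to unfold the quotient-groupoid construction and reduce the statement to a one-to-one correspondence between pure gauge homotopies and the action of $\Inn_{\ge 1}(BFV(E))$. By Definition \ref{def:quotientgroupoid}(a') applied to the quotient $\hat{\mathcal{D}}(E,\Pi)=\hat{\mathcal{D}}_{\geo}(E,\Pi)/\mathrm{pure}$ in Definition \ref{def:BFV-groupoid}, the set $\mathcal{D}(E,\Pi)$ consists of equivalence classes $[\beta]$ of geometric Maurer--Cartan elements where $\beta_0\sim\beta_1$ iff there is a pure morphism from $\beta_0$ to $\beta_1$ in $\hat{\mathcal{D}}_{\geo}(E,\Pi)$. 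By Definition \ref{def:pureclasses} such a pure morphism exists iff there is a pure gauge homotopy from $\beta_0$ to $\beta_1$. Hence the statement reduces to showing: two elements of $\mathcal{D}_{\geo}(E,\Pi)$ are connected by a pure gauge homotopy if and only if they lie in the same $\Inn_{\ge 1}(BFV(E))$-orbit.

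The forward implication is immediate from the definitions: if $(\hat{\beta},\hat{\psi})$ is a pure gauge homotopy from $\beta_0$ to $\beta_1$, then by definition of purity $\hat{\psi}\in\underline{\Inn}_{\ge 1}(BFV(E))$, so its endpoint $\psi_1$ belongs to $\Inn_{\ge 1}(BFV(E))$, and condition (b') of Definition \ref{def:gaugehomotopies} yields $\beta_1=\psi_1\cdot\beta_0$.

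For the converse, I would start from $\beta_1=\psi\cdot\beta_0$ with $\psi\in\Inn_{\ge 1}(BFV(E))$ and build a pure gauge homotopy explicitly. Since $\Inn_{\ge 1}(BFV(E))$ is the subgroup generated by the time-one endpoints of paths in $\underline{\Inn}_{\ge 1}(BFV(E))$, any $\psi$ is a finite product of such endpoints. Concatenating the generating paths via a gluing function (exactly as in Lemma \ref{lemma:isotopiesofHamiltonianhomotopies}(b)) produces an element $\hat{\psi}\in\underline{\Inn}_{\ge 1}(BFV(E))$ with $\psi_0=\id$ and $\psi_1=\psi$. Setting $\beta_t:=\psi_t\cdot\beta_0$ gives a candidate $(\hat{\beta},\hat{\psi})$, and the smoothness of $\hat{\beta}$ as well as properties (a'), (b') of Definition \ref{def:gaugehomotopies} are automatic from the construction.

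The one step I expect to require care is the verification of condition (a) of Definition \ref{def:gaugehomotopies}, namely that every intermediate $\beta_t$ lies in $\mathcal{D}_{\geo}(E,\Pi)$. The key observation is that for each $t\in[0,1]$ the path $\hat{\psi}$ restricted to $[0,t]$ and reparametrized affinely back to $[0,1]$ is again an element of $\underline{\Inn}_{\ge 1}(BFV(E))$, so that $\psi_t$ itself lies in $\Inn_{\ge 1}(BFV(E))$. By Definition \ref{def:geometricMC} the set $\mathcal{D}_{\geo}(E,\Pi)$ is the $\Inn_{\ge 1}(BFV(E))$-orbit of $\mathcal{D}_{\nor}(E,\Pi)$, and is therefore $\Inn_{\ge 1}$-invariant; consequently $\beta_t=\psi_t\cdot\beta_0\in\mathcal{D}_{\geo}(E,\Pi)$ for every $t$, which completes the verification. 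This last point makes precise why the filtration index $\ge 1$ (rather than a larger one) is exactly the right choice for defining purity: it is the smallest index under which the geometric locus is preserved along the flow.
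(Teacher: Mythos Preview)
Your proof is correct and follows the same approach as the paper's own proof, which simply unwinds the quotient-groupoid definition and asserts the equivalence between the existence of a pure gauge homotopy and lying in the same $\Inn_{\ge 1}(BFV(E))$-orbit. You have supplied more detail than the paper does---in particular the explicit construction of the pure gauge homotopy from a given $\psi\in\Inn_{\ge 1}(BFV(E))$ and the check that each $\beta_t$ remains geometric---but the line of argument is identical.
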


\begin{proof}
By definition the set of objects $\mathcal{D}(E,\Pi)$ is the set of equivalence classes in $\mathcal{D}_{\geo}(E,\Pi)$
under the equivalence relation $(X \sim Y :\Leftrightarrow $ there is a pure morphism from $X$ to $Y)$. The existence
of a pure morphism from $X$ to $Y$ is equivalent to the existence of a pure gauge homotopy from $X$ to $Y$. This in turn
is equivalent to the existence of an element of $\Inn_{\ge 1}(BFV(E))$ which maps $X$ to $Y$.
\end{proof}

By Theorem \ref{thm:geometricMCelements} there is an isomorphism of sets
\begin{align*}
[L]: \mathcal{D}(E,\Pi)= \mathcal{D}_{\geo}(E,\Pi)/\Inn_{\ge 1}(BFV(E)) \xrightarrow{\cong} \mathcal{C}(E,\Pi)
\end{align*}
from the set of objects of $\hat{\mathcal{D}}(E,\Pi)$ to the set of objects of $\hat{\mathcal{C}}(E,\Pi)$.
In the next Section this isomorphism is extended to an isomorphism of groupoids
$\mathcal{L}:\hat{\mathcal{D}}(E,\Pi) \xrightarrow{\cong} \hat{\mathcal{C}}(E,\Pi)$.

\section{Isomorphism of the two deformation groupoids}\label{s:isomorphism}

Let $(E,\Pi)$ be a coisotropic vector bundle.
In Theorem \ref{thm:main} we prove that the groupoid of coisotropic sections $\hat{\mathcal{C}}(E,\Pi)$ (introduced in Subsection
\ref{ss:groupoidofcoisotropicsections}) is isomorphic to the BFV-groupoid $\hat{\mathcal{D}}(E,\Pi)$ (introduced in
Subsection \ref{ss:BFV-groupoid}). As a corollary we obtain an isomorphism between the moduli space
of coisotropic sections $\mathcal{M}(E,\Pi)$ of $(E,\Pi)$ (Subsection \ref{ss:Hamiltonianhomotopies})
and the moduli space of geometric Maurer--Cartan elements $\mathcal{N}(E,\Pi)$ (Subsection \ref{ss:gaugehomotopies}).
Although this chain of arguments would be pleasing from a conceptual point of view, it is technically cumbersome. In particular
the verification of Theorem \ref{thm:normalsubgroupoid} Subsection \ref{ss:geometricBFV-groupoid} poses problems.

So instead we will take another route and first prove the result concerning the moduli spaces in Theorem \ref{thm:equivalenceclasses}
independently.
Then we extend the isomorphism between
the moduli spaces to a morphism of groupoids from $\hat{\mathcal{D}}_{\geo}(E,\Pi)$ to $\hat{\mathcal{C}}(E,\Pi)$.
Proposition \ref{prop:kernel} assures that the kernel of this morphism is equal to the kernel
of $\hat{\mathcal{D}}_{\geo}(E,\Pi)\to \hat{\mathcal{D}}(E,\Pi)$. This implies Theorem \ref{thm:normalsubgroupoid} and yields
the isomorphism
between the groupoids $\hat{\mathcal{D}}(E,\Pi)$ and $\hat{\mathcal{C}}(E,\Pi)$.

\subsection{Relating the inner automorphisms}\label{ss:innerautorphisms}

Consider $\hat{\psi} \in \underline{\Inn}(BFV(E))$, i.e. a smooth one-parameter family of inner automorphisms of
the  differential graded Poisson algebra $(BFV(E),[\cdot,\cdot]_{BFV})$. Let $\hat{\gamma}$ be an element of 
$\widetilde{BFV}^{0}(E)=\oplus_{m\ge 0}\Gamma(\wedge^{m}\mathcal{E}_{[0,1]}\otimes \wedge^{m}\mathcal{E}_{[0,1]}^{*})$ that
generates $\hat{\psi}$.
We denote its component in $\widetilde{BFV}^{(0,0)}(E)=\mathcal{C}^{\infty}(E\times [0,1])$ by $\pi(\hat{\gamma})$.

The family $\hat{\psi}$ induces a smooth one-parameter family of Poisson automorphisms $[\hat{\psi}]$ of
$(\mathcal{C}^{\infty}(E),\{\cdot,\cdot\}_{\Pi})$ via
\begin{align*}
\mathcal{C}^{\infty}(E)\hookrightarrow BFV^{0}(E) \xrightarrow{\psi_t} BFV^{0}(E) \xrightarrow{\pi} BFV^{(0,0)}(E)= \mathcal{C}^{\infty}(E).
\end{align*}
It is known that any Poisson automorphism is equl to the pull-back by some diffeomorphism of $E$ which is a Poisson map.
To find out which particular one-parameter family of Poisson-diffeomorphisms corresponds to $\hat{\psi}$ we compute
\begin{eqnarray*}
\frac{d}{dt}|_{t=s}(\pi\circ \psi_t)(f)&=&\pi\left(\frac{d}{dt}|_{t=s}(\psi_t)(f) \right)=\pi\left(-[\gamma_s,\psi_t(f)]_{BFV}\right)\\
&=&-\{\pi(\gamma_{s}),(\pi\circ \psi_t)(f)\}_{\Pi}.
\end{eqnarray*}
This implies that $\pi\circ\hat{\psi}$ is equal to $\left((\phi_t^{-1})^{*}\right)_{t\in [0,1]}$ where
$(\phi_t)_{t\in I}$ is the smooth one-parameter family of Hamiltonian diffeomorphisms satisfying
\begin{align*}
\frac{d}{dt}|_{t=s}\phi_t=X_{\pi(\gamma_s)}|_{\phi_s}, \quad \phi_0=\id.
\end{align*}

\begin{Lemma}\label{lemma:integrability2}
Given $\hat{\gamma} \in \widetilde{BFV}^{0}(E)$ there is a (necessarily unique) element $\hat{\psi}$ of
$\underline{\Inn}(BFV(E))$ satisfying
\begin{align*}
\frac{d}{dt}|_{t=s}\psi_t=-[\gamma_s,\psi_s]_{BFV}, \quad \psi_0=\id
\end{align*}
if and only if $\pi(\hat{\gamma})$ integrates to a
smooth one-parameter family $\hat{\phi}$ of Hamiltonian diffeomorphisms of $(E,\Pi)$.
\end{Lemma}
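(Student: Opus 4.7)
My plan is to prove both directions, with uniqueness being immediate from ODE theory.

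For the \emph{only if} direction, assume $\hat{\psi} \in \underline{\Inn}(BFV(E))$ solving the ODE exists. The computation displayed just above the Lemma is already essentially the argument: applying $\pi$ to the defining ODE for $\hat{\psi}$ and using Lemma \ref{lemma:BFV-bracket}(a), the family $\pi\circ\hat{\psi}$ is a smooth one-parameter family of Poisson automorphisms of $(\mathcal{C}^{\infty}(E),\{\cdot,\cdot\}_{\Pi})$. Every Poisson automorphism is pullback by a Poisson diffeomorphism, so $\pi\circ\hat{\psi}_t=(\phi_t^{-1})^{*}$ for a unique smooth family $\hat{\phi}$ of Poisson diffeomorphisms, and the same computation identifies $\frac{d}{dt}|_{t=s}\phi_t=X_{\pi(\gamma_s)}|_{\phi_s}$, $\phi_0=\id$, showing that $\pi(\hat{\gamma})$ integrates to a smooth family of Hamiltonian diffeomorphisms.

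For the \emph{if} direction, decompose $\hat{\gamma}=\hat{\gamma}^{(0)}+\hat{\gamma}^{(\ge 1)}$ according to the splitting $\widetilde{BFV}^{0}(E)=\mathcal{C}^{\infty}(E\times[0,1])\oplus \widetilde{BFV}^{0}_{\ge 1}(E)$, so $\hat{\gamma}^{(0)}=\pi(\hat{\gamma})$. I will first integrate the resolution-degree-zero piece to some $\hat{\psi}^{(0)}\in\underline{\Inn}(BFV(E))$, and then use Lemma \ref{lemma:integrability1} to integrate the remaining piece. The derivation $[\gamma_s^{(0)},\cdot]_{BFV}$ preserves the bigrading of $BFV(E)$; by Lemma \ref{lemma:BFV-bracket}(a) its action on $\mathcal{C}^{\infty}(E)$ is the Hamiltonian vector field $X_{\gamma_s^{(0)}}$, which by hypothesis integrates globally to $\hat{\phi}$. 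On each bidegree piece $\Gamma(\wedge^{p}\mathcal{E}\otimes\wedge^{q}\mathcal{E}^{*})$ the derivation is a first-order differential operator with symbol $X_{\gamma_s^{(0)}}$; the flow of $X_{\gamma_s^{(0)}}$ on $E$ together with the induced parallel transport (using the connection $\nabla$ on $E\to S$ pulled back to $\mathcal{E}\to E$) integrates this operator to a family of fibrewise linear isomorphisms, yielding $\hat{\psi}^{(0)}$. Since the generator preserves the bigrading and the bracket, $\hat{\psi}^{(0)}_t$ is an automorphism of $(BFV(E),[\cdot,\cdot]_{BFV})$ preserving both the total degree and the resolution-degree filtration.

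Now make the ansatz $\psi_t=\psi^{(0)}_t\circ \chi_t$. Differentiating and using $\frac{d}{dt}|_{t=s}\psi^{(0)}_t=-[\gamma^{(0)}_s,\psi^{(0)}_s]_{BFV}$ together with the fact that $\psi^{(0)}_s$ is a graded Poisson automorphism, a short calculation reduces the original ODE to
\begin{align*}
\frac{d}{dt}\bigg|_{t=s}\chi_t=-\bigl[(\psi^{(0)}_s)^{-1}(\gamma^{(\ge 1)}_s),\chi_s\bigr]_{BFV},\quad \chi_0=\id.
\end{align*}
Because $\psi^{(0)}_s$ preserves the resolution-degree filtration, the family $(\psi^{(0)})^{-1}\bigl(\hat{\gamma}^{(\ge 1)}\bigr)$ still lies in $\underline{\mathfrak{inn}}_{\ge 1}(BFV(E))$, so Lemma \ref{lemma:integrability1} provides a unique solution $\hat{\chi}\in\underline{\Inn}_{\ge 1}(BFV(E))$. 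Setting $\hat{\psi}:=\hat{\psi}^{(0)}\circ\hat{\chi}$ produces the desired element of $\underline{\Inn}(BFV(E))$.

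The main technical obstacle is the construction of $\hat{\psi}^{(0)}$: one has to verify that the bigrading-preserving derivation $[\gamma^{(0)}_s,\cdot]_{BFV}$ is genuinely integrable on $BFV(E)$ once its underlying vector field on $E$ is. The cleanest way to handle this is to view $BFV(E)$ as the algebra of functions on the graded supermanifold $\mathcal{E}[1]\oplus \mathcal{E}^{*}[-1]\to E$ and note that, since the derivation preserves the fibrewise-polynomial bigrading, it is a fibrewise-linear vector field on this supermanifold whose reduced vector field on $E$ is the complete field $X_{\gamma^{(0)}_s}$; such a vector field always integrates to a global flow. All remaining checks (smoothness in $t$, compatibility with the bracket and unit, $\psi_0=\id$) are then routine consequences of the two-step construction.
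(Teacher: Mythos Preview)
Your overall strategy --- factor off a piece of the flow that can be integrated via parallel transport along the Hamiltonian flow on $E$, then reduce the remaining equation to one governed by Lemma \ref{lemma:integrability1} --- is exactly the paper's. The gap is in your first step: the assertion that $[\gamma_s^{(0)},\cdot]_{BFV}$ preserves the bigrading (equivalently, that it is a fibrewise \emph{linear} vector field on $\mathcal{E}[1]\oplus\mathcal{E}^*[-1]$) is not correct in general. The BFV bracket is built from $[\cdot,\cdot]_G$, the horizontal lift of $\Pi$ via the connection $\nabla$, and curvature corrections (see the discussion in the proof of Lemma \ref{lemma:kernel0}); when one inserts a function $F=\gamma_s^{(0)}$, the $G$-term vanishes and the horizontal-lift term yields the covariant derivative $\nabla_{X_F}$, but the curvature corrections contribute terms of strictly positive bidegree. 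Thus $[\gamma_s^{(0)},\cdot]_{BFV}=\nabla_{X_F}+(\text{nilpotent})$, and parallel transport by itself does not integrate it. Consequently your supermanifold argument (``fibrewise linear over a complete base field'') does not apply as stated.

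The paper circumvents this by factoring out $\nabla_{X_F}$ rather than $[\gamma_s^{(0)},\cdot]_{BFV}$: the connection lifts the Hamiltonian flow $\hat{\phi}$ to a family $\hat{\Phi}$ of unital-algebra automorphisms of $BFV(E)$ satisfying $\frac{d}{dt}|_{t=s}\Phi_t=-\nabla_{X_{F_s}}\circ\Phi_s$. With $\varphi_t:=\Phi_t^{-1}\circ\psi_t$ the flow equation becomes one generated by $\Phi_s^{-1}\circ(\nabla_{X_{F_s}}-[\gamma_s,\cdot]_{BFV})\circ\Phi_s$, which the paper checks has the form $[-\Phi_s^{-1}(\gamma_s^{1}),\cdot]_G+(\text{nilpotent})$ and is therefore handled by Lemma \ref{lemma:integrability1}. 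Note that $\hat{\Phi}$ is in general \emph{not} a Poisson automorphism, so the clean conjugation identity you use for $\hat{\psi}^{(0)}$ is unavailable here; instead one analyzes the conjugated generator directly. Your decomposition can be repaired --- $[\gamma_s^{(0)},\cdot]_{BFV}$ \emph{is} integrable, precisely because it is $\nabla_{X_F}$ plus a nilpotent piece --- but establishing this already requires the paper's argument, so splitting at $\gamma^{(0)}$ versus $\gamma^{(\ge 1)}$ gains nothing.
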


\begin{proof}
From the remarks made above it is straightforward to check that the implication ($\Rightarrow$) holds.

For the converse we assume that $F:=\pi(\hat{\gamma}): E\times [0,1] \to \mathbb{R}$ can
be integrated to a smooth one-parameter family of Hamiltonian diffeomorphisms $(\phi_t)_{t\in [0,1]}$
of $(E,\Pi)$.

We fixed a connection $\nabla$ on $E\to S$ in Subsection \ref{ss:BFV-complex} and used it to construct the
BFV-bracket $[\cdot,\cdot]_{BFV}$ on $BFV(E)$.
The connection $\nabla$ on $E\to S$ induces a connection on $\mathcal{E}\to E$ via pull back. Using parallel transport with respect
to this connection one lifts
$(\phi_t)_{t\in [0,1]}$ to a family of vector bundle isomorphisms 
\begin{align*}
\tilde{\phi}_t: \wedge \mathcal{E} \otimes \wedge \mathcal{E}^{*} \xrightarrow{\cong} \wedge \mathcal{E} \otimes \wedge \mathcal{E}^{*}
\end{align*}
covering $(\phi_t)_{t\in [0,1]}$. This induces a morphism of unital algebras
\begin{align*}
\hat{\Phi}: \Gamma(\wedge \mathcal{E}\otimes \wedge \mathcal{E}^{*}) \to
 \Gamma(\wedge \mathcal{E}_{[0,1]}\otimes \wedge \mathcal{E}^{*}_{[0,1]}),\quad
 \beta \mapsto (\tilde{\phi}_t) \circ \beta \circ \phi_t^{-1}
\end{align*}
which generalizes the push forward 
\begin{align*}
\mathcal{C}^{\infty}(E) \to \mathcal{C}^{\infty}(E\times [0,1]), \quad f \mapsto f\circ (\phi_t)^{-1}.
\end{align*}

One checks that
\begin{align*}
\frac{d}{dt}|_{t=s}\Phi_{t}=-\nabla_{X_{\pi(\gamma_s)}}\circ \Phi_s, \quad \Phi_0=\id
\end{align*}
holds where $\nabla_{(\cdot)}$ denotes the covariant derivative of $\Gamma(\wedge \mathcal{E}\otimes \wedge \mathcal{E}^{*})$ with
respect to the connection induced by $\nabla$ and $X_{\pi(\gamma_s)}$ is the Hamiltonian vector field of $\pi(\gamma_s)$ on $(E,\Pi)$.
Consequently $\hat{\Phi}$ is the smooth one-parameter family of automorphism of
$\Gamma(\wedge \mathcal{E}\otimes \wedge \mathcal{E}^{*})$ which integrates the smooth one-parameter family of derivations
$(-\nabla_{X_{\pi(\gamma_t)}})_{t\in [0,1]}$.

The flow equation for $-[\hat{\gamma},\cdot]_{BFV}$ is equivalent to
\begin{align*}
\frac{d}{dt}|_{t=s}\varphi_t=\left(\Phi_s^{-1}\circ(\nabla_{X_{\pi(\gamma_s)}}-[\gamma_s,\cdot]_{BFV})\circ \Phi_s\right)(\varphi_s)
\end{align*}
where $\varphi_t:=\Phi_t^{-1}\circ \psi_t$.
The derivation $[\gamma_s,\cdot]_{BFV}$ can be decomposed as $\nabla_{X_{\pi(\gamma_s)}}+[\gamma^{1}_s,\cdot]_{G}$ where
$\gamma^{1}_s$ is the component of $\gamma_s$ in $BFV^{(1,1)}(E)$
plus a
nilpotent part. Hence
\begin{eqnarray*}
\Phi_s^{-1}(\nabla_{X_{\pi(\gamma_s)}}-[\gamma_s,\cdot]_{BFV})\Phi_s&=&
\Phi_s^{-1}(-[\gamma^{1}_s,\cdot]_{G} + \text{nilpotent part})\Phi_s\\
&=&[-\Phi_s^{-1}\gamma^{1}_s,\cdot]_{G} + \text{nilpotent part}
\end{eqnarray*}
and in Lemma \ref{lemma:integrability1} a derivation of that form was proved to admit a unique flow.
\end{proof}

Consequently we obtain a map 
\begin{align*}
\underline{L}:\underline{\Inn}(BFV(E)) \to \underline{\Ham}(E,\Pi)
\end{align*} 
given by mapping the flow generated
by $\hat{\gamma}\in \widetilde{BFV}^{0}(E,\Pi)$ to the flow generated by its projection $\pi(\hat{\gamma})$
to $\mathcal{C}^{\infty}(E\times [0,1])$. Furthermore there is a map
\begin{align*}
\underline{R}: \underline{\Ham}(E,\Pi) \to \underline{\Inn}(BFV(E))
\end{align*} 
given by
mapping the flow generated by $F\in \mathcal{C}^{\infty}(E\times [0,1])$ to the flow generated by
$F\in \mathcal{C}^{\infty}(E\times [0,1]) = \widetilde{BFV}^{(0,0)}(E)\subset \widetilde{BFV}^{0}(E)$.

Clearly $\underline{L}\circ \underline{R}=\id$ and so $\underline{L}$ is surjective and $\underline{R}$ is injective.
Moreover $\underline{L}$ and $\underline{R}$ are homomorphism with respect to the
group structures on $\underline{\Inn}(BFV(E))$ and $\underline{\Ham}(E,\Pi)$ respectively. The kernel of $\underline{L}$ obviously
contains
the subgroup $\underline{\Inn}_{\ge 1}(BFV(E))$ of $\underline{\Inn}(BFV(E))$.

\begin{Lemma}\label{lemma:kernel0}
The kernel of $\underline{L}: \underline{\Inn}(BFV(E))\to \underline{\Ham}(E,\Pi)$ is the subgroup
$\underline{\Inn}_{\ge 1}(BFV(E))$.
\end{Lemma}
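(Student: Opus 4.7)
The plan is to prove the two set-theoretic inclusions separately, taking advantage of the direct sum decomposition $\widetilde{BFV}^{0}(E) = \widetilde{BFV}^{(0,0)}(E) \oplus \widetilde{BFV}^{0}_{\ge 1}(E)$, where $\widetilde{BFV}^{(0,0)}(E) = \mathcal{C}^{\infty}(E\times [0,1])$ is precisely the image of the projection $\pi$. The inclusion $\underline{\Inn}_{\ge 1}(BFV(E)) \subseteq \ker(\underline{L})$ is essentially tautological: any $\hat{\psi} \in \underline{\Inn}_{\ge 1}(BFV(E))$ is, by definition, generated by some $\hat{\gamma} \in \underline{\mathfrak{inn}}_{\ge 1}(BFV(E))$, and resolution degree $\ge 1$ forces $\pi(\hat{\gamma}) = 0$. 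Hence $\underline{L}(\hat{\psi})$ is the Hamiltonian flow of the zero Hamiltonian, i.e. the constant identity family, so $\hat{\psi}$ lies in $\ker(\underline{L})$.

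For the reverse inclusion, I take $\hat{\psi} \in \ker(\underline{L})$ with generator $\hat{\gamma} \in \widetilde{BFV}^{0}(E)$, decomposed as $\hat{\gamma} = f + \hat{\gamma}_{\ge 1}$ with $f := \pi(\hat{\gamma}) \in \mathcal{C}^{\infty}(E\times[0,1])$ and $\hat{\gamma}_{\ge 1} \in \widetilde{BFV}^{0}_{\ge 1}(E)$. The condition $\underline{L}(\hat{\psi}) = \id$, combined with the Hamiltonian flow equation $\tfrac{d}{dt}|_{t=s}\phi_t = X_{f_s}|_{\phi_s}$ evaluated along $\phi_t = \id$, forces $X_{f_s} = 0$ for every $s \in [0,1]$, i.e. every $f_s$ is a Casimir of $(E,\Pi)$. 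Granted that the adjoint action of such an $f_s$ on all of $BFV(E)$ vanishes, one gets $[\hat{\gamma}, \cdot]_{BFV} = [\hat{\gamma}_{\ge 1}, \cdot]_{BFV}$, so $\hat{\psi}$ also satisfies the ODE \eqref{ODE} for the generator $\hat{\gamma}_{\ge 1}$. The uniqueness part of Lemma \ref{lemma:integrability1} then identifies $\hat{\psi}$ with the integration of $\hat{\gamma}_{\ge 1} \in \underline{\mathfrak{inn}}_{\ge 1}(BFV(E))$, hence $\hat{\psi} \in \underline{\Inn}_{\ge 1}(BFV(E))$.

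The hard part, and essentially the only non-bookkeeping step, is the vanishing claim $[f_s, \cdot]_{BFV} = 0$ whenever $X_{f_s} = 0$. I would extract this from the decomposition $[\gamma_s, \cdot]_{BFV} = \nabla_{X_{\pi(\gamma_s)}} + [\gamma_s^{(1,1)}, \cdot]_{G} + (\text{nilpotent contribution from higher bi-degree components})$ worked out in the proof of Lemma \ref{lemma:integrability2}: when $\gamma_s = f_s$ sits purely in bi-degree $(0,0)$, both $\gamma_s^{(1,1)}$ and the higher pieces responsible for the nilpotent part are absent, so the formula collapses to $[f_s, \cdot]_{BFV} = \nabla_{X_{f_s}}$. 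Since $X_{f_s} = 0$ makes $\nabla_{X_{f_s}}$ vanish on every associated tensor bundle, and in particular on $\wedge \mathcal{E} \otimes \wedge \mathcal{E}^{*}$, the claim follows. With that ingredient in hand the rest of the proof is a routine manipulation of the defining flow equation and the splitting of $\widetilde{BFV}^{0}(E)$ by resolution degree.
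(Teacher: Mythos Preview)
Your overall strategy coincides with the paper's: reduce to showing that if $X_{f_s}=0$ then $[f_s,\cdot]_{BFV}=0$, and conclude that $\hat{\psi}$ is already generated by $\hat{\gamma}_{\ge 1}$. The gap is in how you justify $[f_s,\cdot]_{BFV}=\nabla_{X_{f_s}}$. The decomposition you quote from the proof of Lemma~\ref{lemma:integrability2} only separates $[\gamma_s,\cdot]_{BFV}$ into $\nabla_{X_{\pi(\gamma_s)}}+[\gamma_s^{(1,1)},\cdot]_G$ and a \emph{nilpotent} remainder; it does not assert that this remainder is built solely from the components of $\gamma_s$ in bi-degree $\ge (1,1)$. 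In fact it is not: the BFV-bracket contains higher-order correction terms involving the curvature $R_\nabla$ and the horizontal lift $\iota_\nabla(\Pi)$, and these contribute to $[f_s,\cdot]_{BFV}$ even though $f_s$ sits purely in bi-degree $(0,0)$. So the collapse $[f_s,\cdot]_{BFV}=\nabla_{X_{f_s}}$ does not follow from Lemma~\ref{lemma:integrability2} alone.

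The paper closes exactly this gap by appealing to the explicit construction of $[\cdot,\cdot]_{BFV}$ in \cite{Schaetz}: beyond $[f_s,\cdot]_G=0$ and $\nabla_{X_{f_s}}$, every further contribution to $[f_s,\cdot]_{BFV}$ is a contraction of $df_s$ against tensors built from $\iota_\nabla(\Pi)$ and $R_\nabla$, hence is proportional to $\iota_\nabla(\langle\Pi,df_s\rangle)=\iota_\nabla(X_{f_s})$, which vanishes when $X_{f_s}=0$. Once you insert this argument in place of your appeal to Lemma~\ref{lemma:integrability2}, your proof matches the paper's.
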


\begin{proof}
Consider $\hat{\phi}\in \underline{\Inn}(BFV(E))$ with $\underline{L}(\hat{\phi})=\id$. Assume
$\hat{\phi}$ is generated by $\hat{\gamma}\in \Gamma(\wedge \mathcal{E}_{[0,1]}\otimes \wedge \mathcal{E}_{[0,1]}^{*})$.
We decompose $\hat{\gamma}$ with respect to the filtration $\Gamma(\wedge^{m}\mathcal{E}_{[0,1]}\otimes \wedge^{m}\mathcal{E}^{*}_{[0,1]})$
\begin{align*}
\hat{\gamma}=\hat{\gamma}^{0}+\hat{\gamma}^{1}+\hat{\gamma}^{2}+\cdots
\end{align*}
The identity $\underline{L}(\hat{\phi})=\id$ implies that the Hamiltonian vector field $X_{F}$ associated to the function
$F:=\hat{\gamma}^{0}:E\times [0,1]\to \mathbb{R}$ vanishes, i.e. $<\Pi,dF>=0$.

If $[F,\cdot]_{BFV}=0$ would hold
\begin{eqnarray*}
[\hat{\gamma},\cdot]_{BFV}&=&[F,\cdot]_{BFV}+[\hat{\gamma}^{1}+\hat{\gamma}^{2}+\cdots,\cdot]_{BFV}\\
&=& [\hat{\gamma}^{1}+\hat{\gamma}^{2}+\cdots,\cdot]_{BFV}
\end{eqnarray*}
and so $\hat{\phi}$ would be generated by the element $\hat{\gamma}^{1}+\hat{\gamma}^{2}+\cdots$, i.e.
$\hat{\phi}\in \underline{\Inn}_{\ge 1}(BFV(E))$.

The first contribution to $[F,\cdot]_{BFV}$ is given by $[F,\cdot]_{G}$ where $[\cdot,\cdot]_{G}$ encodes
the fiber pairing between $\mathcal{E}$ and $\mathcal{E}^{*}$. Consequently $[F,\cdot]_{G}=0$.
The next contribution is $\nabla_{X_{F}}=0$. Following the explicit construction of $[\cdot,\cdot]_{G}$
in \cite{Schaetz} one finds that all higher contributions to $[\cdot,\cdot]_{BFV}$ can be written in terms of
wedge products of the horizontal lift $\iota_{\nabla}(\Pi)$ of $\Pi$ with respect to the fixed connection $\nabla$ and contraction
with the curvature tensor $R_{\nabla}\in \Omega^{2}(E,\End(\mathcal{E}))$ interpreted as an element of
$\Omega^{2}(\mathcal{E},\End(\mathcal{E}))$ via pull-back. Hence if we contract one of these terms with $dF$ we obtain
contributions proportional to $<\iota_{\nabla}(\Pi),dF>=\iota_{\nabla}(X_{F})=0$.
\end{proof}

\subsection{An isomorphism of moduli spaces}\label{ss:isomorphism_moduli}
In Subsection \ref{ss:Hamiltonianhomotopies} the moduli space of coisotropic sections $\mathcal{M}(E,\Pi)$
of a coisotropic vector bundle $(E,\Pi)$ was introduced. It is the set of equivalence classes of coisotropic sections
$\mu \in \mathcal{C}(E,\Pi)$ under the equivalence relation $\sim_{H}$ given by Hamiltonian homotopies.

On the other hand we introduced the moduli space $\mathcal{N}(E,\Pi)$ of geometric Maurer--Cartan elements
of $(E,\Pi)$ in Subsection \ref{ss:gaugehomotopies}. Recall that $\mathcal{N}(E,\Pi)$ is
the set of equivalence classes of geometric Maurer--Cartan elements $\beta \in \mathcal{D}_{\geo}(E,\Pi)$
modulo the equivalence relation $\sim_{G}$ given by gauge homotopies.

In Subsection \ref{ss:geometricMC-elements} a surjective map
\begin{align*}
L_{\geo}: \mathcal{D}_{\geo}(E,\Pi) \twoheadrightarrow \mathcal{C}(E,\Pi)
\end{align*}
from the set of geometric Maurer--Cartan elements $\mathcal{D}_{\geo}(E,\Pi)$ of $(E,\Pi)$ to the set of
coisotropic sections $\mathcal{C}(E,\Pi)$ was introduced.

\begin{Theorem}\label{thm:equivalenceclasses}
Let $(E,\Pi)$ be a coisotropic vector bundle. Then the map
\begin{align*}
L_{\geo}: \mathcal{D}_{\geo}(E,\Pi) \twoheadrightarrow \mathcal{C}(E,\Pi)
\end{align*}
induces a bijection
\begin{align*}
[L_{\geo}]: \mathcal{N}(E,\Pi) \xrightarrow{\cong} \mathcal{M}(E,\Pi).
\end{align*}
\end{Theorem}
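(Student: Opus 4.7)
The plan is to establish that $[L_{\geo}]$ is well-defined on $\sim_G$-equivalence classes, is surjective, and is injective. Surjectivity is immediate: Theorem \ref{thm:geometricMCelements}(a) already asserts that $L_{\geo}\colon \mathcal{D}_{\geo}(E,\Pi)\to\mathcal{C}(E,\Pi)$ is onto, so every class in $\mathcal{M}(E,\Pi)$ is hit. The two nontrivial tasks are well-definedness (gauge homotopies project to Hamiltonian homotopies) and injectivity (Hamiltonian homotopies lift to gauge homotopies up to a pure correction).

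For well-definedness, given a gauge homotopy $(\hat\beta,\hat\psi)$ from $\beta_0$ to $\beta_1$, I would set $\mu_t:=L_{\geo}(\beta_t)$ and $\hat\phi:=\underline{L}(\hat\psi)\in\underline{\Ham}(E,\Pi)$, and check that $(\hat\mu,\hat\phi)$ is a Hamiltonian homotopy from $\mu_0$ to $\mu_1$. The only nontrivial condition is (b') in Definition \ref{def:Hamiltonianhomotopies}, namely that $\phi_t$ carries the graph of $\mu_0$ to that of $\mu_t$. The key ingredients are (i) the fact from Lemma \ref{lemma:geometricMC} that the graph of $-\mu_\beta$ can be recovered as the zero locus in $E$ of the $(1,0)$-component $\Omega_0+\beta_{(1,0)}$, and (ii) the calculation in Subsection \ref{ss:innerautorphisms} showing that $\psi_t$ descends on $\mathcal{C}^{\infty}(E)\subset BFV(E)$ to pull-back by $\phi_t^{-1}$, so that it transports zero loci through $\phi_t$.

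For injectivity, suppose $\mu_i:=L_{\geo}(\beta_i)$ for $i=0,1$ are connected by a Hamiltonian homotopy $(\hat\mu,\hat\phi)$. I would lift $\hat\phi$ to $\hat\psi:=\underline{R}(\hat\phi)\in\underline{\Inn}(BFV(E))$ using Lemma \ref{lemma:integrability2}, and define $\hat\beta_t:=\psi_t(\Omega+\beta_0)-\Omega$. Since $\psi_t$ is a graded Poisson automorphism, each $\hat\beta_t$ lies in $\mathcal{D}_{\alg}(E,\Pi)$, and the endpoint $\tilde\beta_1:=\hat\beta_1$ satisfies (granting the geometricity step below) $L_{\geo}(\tilde\beta_1)=\mu_1=L_{\geo}(\beta_1)$. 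Theorem \ref{thm:geometricMCelements}(c) then furnishes $\alpha\in\Inn_{\ge 1}(BFV(E))$ with $\alpha(\Omega+\tilde\beta_1)=\Omega+\beta_1$, and Lemma \ref{lemma:integrability1} lets me integrate a generator of $\alpha$ along $[0,1]$ to produce a pure gauge homotopy from $\tilde\beta_1$ to $\beta_1$. Concatenation using $\Box_\rho$ (Lemma \ref{lemma:gaugehomotopy}) then yields $\beta_0\sim_G\beta_1$, as required.

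The hard part is showing that the intermediate elements $\hat\beta_t$ produced by the lift $\underline{R}(\hat\phi)$ actually remain in $\mathcal{D}_{\geo}(E,\Pi)$ and satisfy $L_{\geo}(\hat\beta_t)=\mu_t$. The obstruction is that $\psi_t=\underline{R}(\hat\phi)_t$ generally does not lie in $\Inn_{\ge 1}(BFV(E))$, hence does not preserve the geometric condition by fiat, and moreover does not respect the bigrading of $BFV(E)$ — so the $(1,0)$-component of $\hat\beta_t$ picks up contributions from $\psi_t(\Omega_0)$ in all bidegrees. My strategy would be to track the zero locus of $\Omega_0+\hat\beta_{t,(1,0)}$ in $E$: using the explicit form of $[\cdot,\cdot]_{BFV}$ (recalled in Subsection \ref{ss:BFV-complex}) together with the fact that the canonical pairing piece $[\cdot,\cdot]_G$ dominates the action of $\hat\gamma^{(0,0)}$ on $\Omega_0$ by Hamiltonian lifting, this locus should transform under $\phi_t$ from the graph of $-\mu_0$ to the graph of $-\mu_t$. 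Once this is in place, Lemma \ref{lemma:geometricMC} supplies the required $A_t\in\Gamma(GL_+(\mathcal{E}))$ witnessing $\hat\beta_t\in\mathcal{D}_{\geo}(E,\Pi)$ with $L_{\geo}(\hat\beta_t)=\mu_t$, completing the injectivity argument.
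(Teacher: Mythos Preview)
Your overall architecture (well-definedness via $\underline{L}$, surjectivity from Theorem \ref{thm:geometricMCelements}(a), injectivity by lifting $\hat\phi$ and then appending a pure gauge homotopy at the end) matches the paper's. You also correctly isolate the hard point: showing that the lifted family $\hat\beta_t=\psi_t\cdot\beta_0$ stays in $\mathcal{D}_{\geo}(E,\Pi)$. But your proposed resolution of this point has a genuine gap.

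First, a small technical slip: for $F\in BFV^{(0,0)}(E)$ and $s\in\Gamma(\mathcal{E})$ one has $[F,s]_G=0$, since there is nothing to contract. The piece of $[F,\cdot]_{BFV}$ that actually moves $\Omega_0$ is $\nabla_{X_F}$ (and curvature corrections), not the pairing $[\cdot,\cdot]_G$; cf.\ the proofs of Lemma \ref{lemma:integrability2} and Lemma \ref{lemma:kernel0}. So ``$[\cdot,\cdot]_G$ dominates'' is not the right picture.

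More seriously, even granting that the zero locus of $\Omega_0+\hat\beta_{t,(1,0)}$ equals the graph of $-\mu_t$ and that the intersection is transversal, you cannot conclude from Lemma \ref{lemma:geometricMC} that $\hat\beta_t$ is geometric. That lemma \emph{characterizes} geometric Maurer--Cartan elements by the existence of a global $A_t\in\Gamma(GL_+(\mathcal{E}))$ with $\Omega_0+\hat\beta_{t,(1,0)}=A_t(\Omega_0+p^*(\mu_t))$; it does not produce $A_t$ for you. Knowing the zero locus (even with transversality) only gives such an $A_t$ \emph{near} the zero locus, and there is no reason it extends to a globally invertible section. This is precisely the obstacle the paper has to overcome, and it does so by a different route: it first lifts the family $-\hat\mu$ to a smooth family $\hat\gamma$ of \emph{normalized} Maurer--Cartan elements, forms $\delta_t:=\varphi_t^{-1}\cdot\gamma_t$, and observes that all $\delta_t$ have the \emph{same} zero locus (the graph of $\mu_0$). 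This allows the use of the single homotopy $h[\sigma_0]$ to produce $M_t:=h[\sigma_0](\Omega_0+\sigma_t)$, which is invertible near $S_{\mu_0}$; a partition-of-unity modification then yields a globally invertible $N_t\in\Gamma(GL_+(\mathcal{E}))$, whose generator $Z_t$ integrates to the needed pure correction $\hat\Phi\in\underline{\Inn}_{\ge 1}(BFV(E))$. Only then does one obtain a family $(\varphi_t\circ\Phi_t)\cdot\gamma_0$ that is manifestly geometric, since its $(1,0)$-part agrees with that of $\gamma_t$.

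In short: your endpoint argument (connecting $\tilde\beta_1$ to $\beta_1$ via Theorem \ref{thm:geometricMCelements}(c)) is fine, but the step ``zero locus is right $\Rightarrow$ geometric via Lemma \ref{lemma:geometricMC}'' is not valid. What is missing is exactly the construction the paper supplies: an auxiliary reference family $\hat\gamma$ and the $M_t\to N_t$ modification producing a globally invertible frame, which together yield the pure one-parameter correction $\hat\Phi$ that forces geometricity along the whole path.
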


\begin{proof}
Let $(\hat{\alpha},\hat{\psi})$ be a gauge homotopy from the geometric Maurer--Cartan element $\alpha$ to 
the geometric Maurer--Cartan element $\beta$. Then
\begin{align*}
\hat{L}_{\geo}(\hat{\alpha},\hat{\psi}):=(L_{\geo}(\hat{\alpha}),\underline{L}(\hat{\psi}))
\end{align*}
is a Hamiltonian homotopy from $L_{\geo}(\alpha)$ to $L_{\geo}(\beta)$. Hence
$L_{\geo}$ factorizes to a map from $\mathcal{N}(E,\Pi)$ to $\mathcal{M}(E,\Pi)$ which we denote by $[L_{\geo}]$.
Since $L_{\geo}$ is surjective, so is $[L_{\geo}]$.

To prove injectivity we have to show that $L_{\geo}(\alpha)\sim_{H}L_{\geo}(\beta)$ implies $\alpha \sim_{G} \beta$.
We set $-\mu:=L_{\geo}(\alpha)$ and $-\nu:=L_{\geo}(\beta)$ and choose a Hamiltonian homotopy $(-\hat{\mu},\hat{\phi})$
from $-\mu$ to $-\nu$.

Consider the smooth one-parameter family of coisotropic sections $-\hat{\mu}$. In \cite{Schaetz}
it was proved that every coisotropic section $-\mu_t$ can be extented to a normalized (and hence geometric) Maurer--Cartan element
$\gamma_t$ of $BFV(E)$. One way to construct $\gamma_t$
uses the complex $(BFV(E),\delta[p_{!}(\mu_t)])$ and the associated
homotopy $h[p_{!}(\mu_t)]$. The extension of $\Omega_0+p_{!}(\mu_t)$ to a geometric Maurer--Cartan element 
is constructed in an iterative procedure where
the vanishing of certain obstruction classes in $H^{\bullet}(BFV(E),\delta[p_{!}(\mu_t)])$ is proven at every step. In order to find
cochains in $BFV(E)$ whose images under $\delta[p_{!}(\mu_t)]$ cancel the obstruction elements one uses the homotopy
$h[p_{!}(\mu_t)]$. Since $h[p_{!}(\mu_t)]$ depends smoothly on $t\in [0,1]$ so does the constructed family of geometric Maurer-Cartan
elements $\hat{\gamma}:=(\gamma_t)_{t\in [0,1]}$.

In the previous Subsection it was shown that the smooth one-parameter family of Hamiltonian
diffeomorphisms $\hat{\phi}$ can be lifted to a smooth one-parameter family of inner automorphisms $\hat{\varphi}$
of $(BFV(E),[\cdot,\cdot]_{BFV})$. Consider the smooth one-parameter family of Maurer--Cartan elements
\begin{align*}
\hat{\delta}:=(\varphi_t^{-1}\cdot \gamma_t)_{t\in [0,1]}.
\end{align*}
We claim that there is an element $\hat{\Phi} \in \underline{\Inn}_{\ge 1}(BFV(E))$ such that
$(\hat{\varphi}\circ\hat{\Phi})\cdot \delta_0$ is geometric. Consequently
$((\hat{\varphi}\circ \hat{\Phi})\cdot \delta_0,\hat{\varphi}\circ\hat{\Phi})$ is a gauge homotopy from $\gamma_0$ to
$(\varphi_1\circ \hat{\Phi})\cdot \gamma_0$.

So consider the smooth one-parameter family of Maurer--Cartan elements
$\hat{\delta}$. Denote the component of $\delta_t$ in $BFV^{(1,0)}(E)=\Gamma(\mathcal{E})$ by $\sigma_t$.
The section $\Omega_0+\sigma_t$ intersects the zero section of $\mathcal{E}\to E$
in the graph of $\mu_0$ for arbitrary $t\in [0,1]$.
Moreover this intersection can be checked to be transversal.

Since $\delta_0$ is geometric we have detailed information on the cohomology of the complex $(BFV(E),\delta[\sigma_0])$,
see Subsection \ref{ss:geometricMC-elements}. 
We make the following observations:
\begin{itemize}
\item[(a')] $\delta[\sigma_0](\Omega_0+\sigma_t)=0$ for arbitrary $t \in [0,1]$ since
the differential is defined via the pairing between $\mathcal{E}$ and $\mathcal{E}^{*}$,
\item[(b')] $\pr_{\sigma_0}(\Omega_0+\sigma_t)=0$ for arbitrary $t\in [0,1]$ since 
the projection involves evaluation of the section
$\Omega_0+\sigma_t$ on the vanishing locus of $\Omega_0+\sigma_0$ and $\Omega_0+\sigma_t$ vanishes there as well,
\item[(c')] hence the formula for the homotopy $h[\alpha_0]$ implies
\begin{align*}
\delta[\sigma_0](h[\sigma_0](\Omega_0+\sigma_t))=\Omega_0+\sigma_t
\end{align*}
for arbitrary $t\in [0,1]$.
\end{itemize}
Define $M_t:=h[\sigma_0](\Omega_0+\sigma_t) \in \Gamma(\mathcal{E}\otimes \mathcal{E}^{*})$ which we interpret as a smooth
family of sections of $\Gamma(\End(\mathcal{E}))$ parameterized by $[0,1]$. The identity in (c') translates into
\begin{align*}
M_t(\Omega_0+\sigma_0)=\Omega_0+\sigma_t
\end{align*}
and $M_0=\id$ can be checked using the property $h[\sigma_0](\Omega_0+\sigma_0)=\id$ which follows from $h(\Omega_0)=\id$.

We remarked before that $\Omega_0+\sigma_t$ intersect the zero section of $\mathcal{E}\to E$ exactly in the graph $S_{\mu_0}$
of $\mu_0$. Moreover
this intersection is transversal and this implies that $M_t|_{S_{\mu_0}} \in \Gamma(\End(\mathcal{E}|_{S_{\mu_0}}))$ is invertible for all
$t\in [0,1]$ because a section of $\mathcal{E}$ which intersects $S$ transversally is mapped by $h$ to an
endomorphism of $\mathcal{E}$ that it invertible over $S$. By continuity of $M_t$ and compactness of $[0,1]$ there
is an open neighbourhood $V$ of $S_{\mu_0}$ in $E$ such that $M_t|_{V}$ is in $\Gamma(GL(\mathcal{E}|_{V}))$ for all
$t\in [0,1]$.

Next we modify $M_t$ such that it becomes invertible on the complement of $V$. First define
\begin{align*}
X_t:=\left(\frac{d}{dt}M_t\right)\circ M_t^{-1}
\end{align*}
on $V\times [0,1]$. It satisfies
\begin{align*}
X_t(\Omega_0+\sigma_t)=\left(\frac{d}{dt}M_t\right)(\Omega_0+\sigma_0)
\end{align*}
there. Choose a fiber metric $g$ on $\mathcal{E}\to E$ and define a family of fiberwise linear endomorphisms $Y_t$ of $\mathcal{E}$
by
\begin{align*}
Y_t: \mathcal{E}_{e}\xrightarrow{P(g)}<(\Omega_0+\sigma_t)|_{e}>\to<\left(\frac{d}{dt}M_t\right)(\Omega_0+\sigma_0)|_{e}>
\hookrightarrow \mathcal{E}_{e}
\end{align*}
on the complement of $S_{\mu_0}$. Here $P(g)$ denotes the orthogonal projection with respect to the chosen fiber metric
on the subvector bundle spanned by $\Omega_0+\sigma_t$. The smooth one-parameter family $(Y_t)_{t\in [0,1]}$ also satisfies
\begin{align*}
Y_t(\Omega_0+\sigma_t)=\left(\frac{d}{dt}M_t\right)(\Omega_0+\sigma_0)
\end{align*}
It is possible to find an open neighbourhood $W$ of $S_{\mu_0}$ in $V$ such that its closure $\overline{W}$ is still contained
in $V$. Consequently $(V,E\setminus \overline{W})$ is an open cover of $E$ and hence there is a partition of
unity $(\rho_1,\rho_2)$ subordinated to it. We set
\begin{align*}
Z_t:=\rho_1X_t+\rho_2 Y_t
\end{align*}
which is defined on all of $E$ and satisfies
\begin{align*}
Z_t(\Omega_0+\sigma_t)=\left(\frac{d}{dt}M_t\right)(\Omega_0+\sigma_0)
\end{align*}
there. The ordinary differential equation
\begin{align*}
\frac{d}{dt}N_t=Z_t\circ N_t, \quad N_0=\id
\end{align*}
can be solved fiberwise and one obtains a smooth one-parameter family $\hat{N}\in \Gamma(GL_{+}(\mathcal{E})_{[0,1]})$.
Furthermore one verifies
\begin{align*}
\frac{d}{dt}(N_t(\Omega_0+\sigma_0))=Z_t(N_t(\Omega_0+\sigma_0)), \quad N_0(\Omega_0+\sigma_0)=\Omega_0+\sigma_0
\end{align*}
which is exactly the flow equation satisfied by $(\Omega_0+\sigma_t)=M_t(\Omega_0+\sigma_0)$ and consequently
\begin{align*}
N_t(\Omega_0+\sigma_0)=\Omega_0+\sigma_t
\end{align*}
holds for arbitrary $t\in [0,1]$.

The smooth one-parameter family $(Z_t)_{t\in [0,1]}$ can be interpreted as an element of
$\Gamma(\mathcal{E}_{[0,1]}\otimes \mathcal{E}^{*}_{[0,1]})=\widetilde{BFV}^{(1,1)}(E)$ and as such it acts
on $BFV(E)$ via $[Z_t,\cdot]_{BFV}$. By Lemma \ref{lemma:integrability1} this smooth
one-parameter family of derivations integrates to a unique smooth one-parameter family of inner automorphisms
$\hat{\Phi}$ of $(BFV(E),[\cdot,\cdot]_{BFV})$.

By construction $(\varphi_{t}^{-1}\cdot\gamma_t=:\delta_t)_{t\in [0,1]}$ and $(\Phi_t\cdot \gamma_0)_{t\in [0,1]}$
are two smooth one-parameter families of Maurer--Cartan elements of $BFV(E)$ whose components in
$\Gamma(\mathcal{E}_{[0,1]})$ coincide. Consequently the components of the two smooth one-parameter
families of Maurer--Cartan elements $(\gamma_t)_{t\in [0,1]}$
and $((\varphi_t\circ \Phi_t)\cdot \gamma_0)_{t\in [0,1]}$ in $\Gamma(\mathcal{E}_{[0,1]})$ coincide.
Since $\hat{\gamma}$ is a family of geometric Maurer--Cartan elements so is $(\hat{\varphi}\circ\hat{\Phi})\cdot\gamma_0$.
So we constructed a gauge homotopy $((\hat{\varphi}\circ\hat{\Phi})\cdot\gamma_0,\hat{\varphi}\circ\hat{\Phi})$
from $\gamma_0$ to $(\varphi_1\circ \Phi_1)\cdot \gamma_0$.

To finish the proof we have to show that $\alpha \sim_{G} \gamma_0$ and $\beta \sim_{G} (\varphi_1\circ \Phi_1)\cdot \gamma_0$ hold.
This follows from the fact that the images  of $\alpha$ and $\gamma_0$ under $L_{\geo}$ on the one hand
and of $\beta$ and $(\varphi_1\circ \Phi_1)\cdot \gamma_0$ on the other hand are equal. By Theorem \ref{thm:geometricMCelements}
there are elements of $\underline{\Inn}{\ge 1}(BFV(E))$ that relate $\alpha$ to $\gamma_0$ and
$\beta$ to $(\varphi_1\circ \Phi_1)\cdot \gamma_0$ respectively. Such smooth one-parameter families of inner automorphisms yield
appropriate
gauge homotopies.
\end{proof}

\subsection{A morphism of groupoids}\label{ss:morphism}

The aim of this subsection is to extend the map
\begin{align*}
L_{\geo}: \mathcal{D}_{\geo}(E,\Pi) \twoheadrightarrow \mathcal{C}(E,\Pi)
\end{align*}
in a natural way
to a morphism of groupoids
\begin{align*}
\mathcal{L}_{\geo}: \hat{\mathcal{D}}_{\geo}(E,\Pi) \to \hat{\mathcal{C}}(E,\Pi)
\end{align*}
from the geometric BFV-groupoid $\hat{\mathcal{D}}_{\geo}(E,\Pi)$ to the groupoid of coisotropic sections $\hat{\mathcal{C}}(E,\Pi)$
of $(E,\Pi)$.

Let $(\hat{\alpha},\hat{\psi})$ be a gauge homotopy from the geometric Maurer--Cartan element $\alpha$ to 
the geometric Maurer--Cartan element $\beta$. Then the map introduced in the proof of Theorem \ref{thm:equivalenceclasses}
\begin{align*}
\hat{L}_{\geo}(\hat{\alpha},\hat{\psi}):=(L_{\geo}(\hat{\alpha}),\underline{L}(\hat{\psi}))
\end{align*}
is a Hamiltonian homotopy from $L_{\geo}(\alpha)$ to $L_{\geo}(\beta)$. It is straightforward to verify
\begin{align*}
\hat{L}_{\geo}\left((\hat{\beta},\hat{\phi})\Box_{\rho}(\hat{\alpha},\hat{\psi})\right)=
\hat{L}_{\geo}(\hat{\beta},\hat{\phi})\Box_{\rho}\hat{L}_{\geo}(\hat{\alpha},\hat{\psi})
\end{align*}
whenever the composition is defined, i.e. whenever the data attached to the boundary components $\{1\}$ and $\{0\}$ respectively match.
Furthermore $\hat{L}_{\geo}$ maps $\id_{\alpha}$ to $\id_{L_{\geo}(\alpha)}$
and can be extented to a map from isotopies of gauge homotopies to isotopies of Hamiltonian homotopies.

\begin{Lemma}\label{lemma:morphismofgroupoids}
The maps $L_{\geo}$ and $\hat{L}_{\geo}$ induce a morphism of groupoids
\begin{align*}
\mathcal{L}_{\geo}: \hat{\mathcal{D}}_{\geo}(E,\Pi) \to \hat{\mathcal{C}}(E,\Pi)
\end{align*}
that extends $L_{\geo}: \mathcal{D}_{\geo}(E,\Pi) \to \mathcal{C}(E,\Pi)$.
\end{Lemma}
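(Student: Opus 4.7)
The plan is to set $\mathcal{L}_{\geo}$ equal to $L_{\geo}$ on objects and, on morphisms, to send the $\simeq_G$-class of a gauge homotopy $(\hat{\alpha},\hat{\psi})$ to the $\simeq_H$-class of the Hamiltonian homotopy $\hat{L}_{\geo}(\hat{\alpha},\hat{\psi})=(L_{\geo}(\hat{\alpha}),\underline{L}(\hat{\psi}))$ already produced in the discussion preceding the statement. That discussion verifies that this assignment takes values where asserted, respects composition with any gluing function $\rho$, and sends $\id_{\alpha}$ to $\id_{L_{\geo}(\alpha)}$ at the level of representatives. The only substantive task is therefore to check that the assignment descends to $\simeq_G$-classes; functoriality then follows by passing to the quotient.

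For well-definedness I would extend $\hat{L}_{\geo}$ to isotopies of gauge homotopies. Given an isotopy $(\hat{\beta},\hat{\Psi})$ as in Definition \ref{def:isotopiesofgaugehomotopies}, with generator $\hat{\gamma}$ as in axiom (c'), I set
\begin{align*}
\hat{L}_{\geo}(\hat{\beta},\hat{\Psi}):=\bigl(L_{\geo}(\hat{\beta}),\,\underline{L}(\hat{\Psi})\bigr),
\end{align*}
where $L_{\geo}$ is applied fiberwise in $(t,s)\in[0,1]^{2}$ (yielding a smooth $S\times[0,1]^{2}$-family of coisotropic sections by Lemma \ref{lemma:geometricMC}) and $\underline{L}(\hat{\Psi})$ is the $[0,1]^{2}$-family of Hamiltonian diffeomorphisms obtained by integrating the Hamiltonian vector field of $F:=\pi(\hat{\gamma})\in\mathcal{C}^{\infty}(E\times[0,1]^{2})$ in the $t$-direction, via Lemma \ref{lemma:integrability2} applied for each fixed $s$. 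Axioms (a') and (b') of Definition \ref{def:isotopiesofHamiltonianhomotopies} are direct translations of the corresponding axioms of the gauge isotopy; axiom (c') holds by construction of $F$; and axiom (d'), the graph-matching condition for $\Phi_{t,s}$ and $\mu_{t,s}=L_{\geo}(\beta_{t,s})$, follows from axiom (d') of Definition \ref{def:isotopiesofgaugehomotopies} by the same pull-back argument used in the proof of Theorem \ref{thm:equivalenceclasses}, combined with the identity $\pi\circ\Psi_{t,s}=(\Phi_{t,s}^{-1})^{*}$ recorded in Subsection \ref{ss:innerautorphisms}. Consequently $\simeq_G$-equivalent gauge homotopies map to $\simeq_H$-equivalent Hamiltonian homotopies.

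Functoriality is then immediate from the identities
\begin{align*}
\hat{L}_{\geo}\bigl((\hat{\beta},\hat{\phi})\Box_{\rho}(\hat{\alpha},\hat{\psi})\bigr)=\hat{L}_{\geo}(\hat{\beta},\hat{\phi})\Box_{\rho}\hat{L}_{\geo}(\hat{\alpha},\hat{\psi}),\qquad \hat{L}_{\geo}(\id_{\alpha})=\id_{L_{\geo}(\alpha)},
\end{align*}
noted above at the level of representatives: after passing to $\simeq_G$- and $\simeq_H$-classes (with the dependence on $\rho$ killed by Lemma \ref{lemma:isotopiesofHamiltonianhomotopies}(b) on the gauge side and by the same statement on the Hamiltonian side), these identities yield preservation of composition and of identities, and preservation of inverses is automatic in a groupoid. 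I expect the main obstacle to be the verification of axiom (d') for the image isotopy: the other axioms are essentially formal consequences of the corresponding gauge-theoretic axioms and the structural properties of $\underline{L}$ developed in Subsection \ref{ss:innerautorphisms}, whereas (d') requires tracking the geometric interpretation of the gauge action on geometric Maurer--Cartan elements through $L_{\geo}$, along the same lines as the graph-manipulations performed in the proof of Theorem \ref{thm:equivalenceclasses}.
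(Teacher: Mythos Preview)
Your proposal is correct and follows the same approach as the paper, which in fact gives no separate proof of this lemma: the entire argument is contained in the paragraph preceding the statement, asserting that $\hat{L}_{\geo}$ respects $\Box_{\rho}$, sends $\id_{\alpha}$ to $\id_{L_{\geo}(\alpha)}$, and ``can be extended to a map from isotopies of gauge homotopies to isotopies of Hamiltonian homotopies''. You simply spell out the verification of this last claim (the well-definedness step) in more detail than the paper does; one small slip is that the $\rho$-independence on the gauge side is Lemma~\ref{lemma:isotopiesofgaugehomotopies}(b), not Lemma~\ref{lemma:isotopiesofHamiltonianhomotopies}(b).
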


\begin{Lemma}\label{lemma:surjectivity}
The morphism of groupoids
\begin{align*}
\mathcal{L}_{\geo}: \hat{\mathcal{D}}_{\geo}(E,\Pi) \to \hat{\mathcal{C}}(E,\Pi)
\end{align*}
is surjective on objects and surjective on all homomorphism sets.
\end{Lemma}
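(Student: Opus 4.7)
The plan is to prove the two surjectivity statements separately, with the second one being the substantive one. For surjectivity on objects, note that every coisotropic section $\mu \in \mathcal{C}(E,\Pi)$ is in the image of $L_{\geo}$ by Theorem \ref{thm:geometricMCelements}(a), so we can pick any $\alpha \in \mathcal{D}_{\geo}(E,\Pi)$ with $L_{\geo}(\alpha) = \mu$ as a preimage object.

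For surjectivity on morphism sets, I fix $\alpha, \beta \in \mathcal{D}_{\geo}(E,\Pi)$ with $L_{\geo}(\alpha) = -\mu_0$ and $L_{\geo}(\beta) = -\mu_1$, and an arbitrary Hamiltonian homotopy $(-\hat{\mu}, \hat{\phi})$ from $-\mu_0$ to $-\mu_1$. The idea is to reuse the construction carried out in the proof of Theorem \ref{thm:equivalenceclasses}: extend the path $-\hat{\mu}$ to a smooth family $\hat{\gamma}$ of geometric Maurer--Cartan elements by the inductive obstruction argument, lift $\hat{\phi}$ to $\hat{\varphi} \in \underline{\Inn}(BFV(E))$ via Lemma \ref{lemma:integrability2}, and produce $\hat{\Phi} \in \underline{\Inn}_{\ge 1}(BFV(E))$ such that $(\hat{\varphi} \circ \hat{\Phi}) \cdot \gamma_0$ is a geometric gauge homotopy from $\gamma_0$ to $(\varphi_1 \circ \Phi_1) \cdot \gamma_0$. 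The crucial observation is that applying $\hat{L}_{\geo}$ to this gauge homotopy already recovers the prescribed Hamiltonian homotopy: the second slot is $\underline{L}(\hat{\varphi} \circ \hat{\Phi}) = \hat{\phi}$ because $\hat{\Phi}$ lies in the kernel of $\underline{L}$ by Lemma \ref{lemma:kernel0}, and the first slot is $-\hat{\mu}$ because the $\Gamma(\mathcal{E}_{[0,1]})$-component of $(\varphi_t \circ \Phi_t) \cdot \gamma_0$ equals that of $\gamma_t$ by construction.

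It remains to adjust the endpoints from $\gamma_0$ and $(\varphi_1 \circ \Phi_1) \cdot \gamma_0$ to the originally prescribed $\alpha$ and $\beta$. Since $L_{\geo}(\alpha) = L_{\geo}(\gamma_0)$ and $L_{\geo}(\beta) = L_{\geo}((\varphi_1 \circ \Phi_1) \cdot \gamma_0)$, Theorem \ref{thm:geometricMCelements}(c) provides elements of $\Inn_{\ge 1}(BFV(E))$ sending $\alpha$ to $\gamma_0$ and $(\varphi_1 \circ \Phi_1) \cdot \gamma_0$ to $\beta$; integrating these along constant one-parameter families yields two pure gauge homotopies. Composing them at both ends of the main gauge homotopy via $\Box_{\rho}$ produces a gauge homotopy from $\alpha$ to $\beta$. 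Under $\hat{L}_{\geo}$ the two pure pieces become the identity Hamiltonian homotopies $\id_{-\mu_0}$ and $\id_{-\mu_1}$, because pure gauge homotopies have trivial $\underline{L}$-image and constant $L_{\geo}$-image, so the result is $\id_{-\mu_0}\Box_{\rho}(-\hat{\mu},\hat{\phi})\Box_{\rho}\id_{-\mu_1}$, which is equivalent to $(-\hat{\mu},\hat{\phi})$ by Lemma \ref{lemma:isotopiesofHamiltonianhomotopies}(c).

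The main technical obstacle is verifying that $\hat{L}_{\geo}$ really sends the concatenated gauge homotopy to the prescribed Hamiltonian homotopy on the nose (up to $\simeq_H$), which relies on three separate inputs working in concert: the compatibility $\hat{L}_{\geo}((\hat{\beta},\hat{\phi})\Box_{\rho}(\hat{\alpha},\hat{\psi})) = \hat{L}_{\geo}(\hat{\beta},\hat{\phi})\Box_{\rho}\hat{L}_{\geo}(\hat{\alpha},\hat{\psi})$ already recorded before the lemma, Lemma \ref{lemma:kernel0} identifying the kernel of $\underline{L}$, and the invariance of $L_{\geo}$ under $\Inn_{\ge 1}(BFV(E))$ from Theorem \ref{thm:geometricMCelements}(b). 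Once these are assembled, no further delicate argument is needed since the central construction has been carried out in the proof of Theorem \ref{thm:equivalenceclasses}.
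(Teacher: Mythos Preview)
Your proof is correct and takes essentially the same approach as the paper: surjectivity on objects via Theorem~\ref{thm:geometricMCelements}(a), and surjectivity on morphisms by observing that the gauge homotopy built in the proof of Theorem~\ref{thm:equivalenceclasses} is mapped by $\hat{L}_{\geo}$ back to the prescribed Hamiltonian homotopy. The paper's proof stops there, producing a lift only for \emph{some} $\alpha,\beta$ with the correct images under $L_{\geo}$; your extra step of pre- and post-composing with pure gauge homotopies to hit arbitrarily prescribed endpoints $\alpha,\beta$ actually establishes the stronger fullness statement, which the paper's terse argument leaves implicit (and which is in any case absorbed by the quotient by pure morphisms in the subsequent application).
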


\begin{proof}
The surjectivity on the level of objects is content of part (a) of Theorem \ref{thm:geometricMCelements}.

Let $(-\hat{\mu},\hat{\phi})$ be a Hamiltonian homotopy from $-\mu$ to $-\nu$. In the proof
of Theorem \ref{thm:equivalenceclasses} a gauge homotopy from some geometric Maurer--Cartan element
$\alpha$ with $L_{\geo}(\alpha)=-\mu$ to some other geometric Maurer--Cartan element $\beta$ with
$L_{\geo}(\beta)=-\nu$ was constructed. It is straightforward to check that the image of this gauge homotopy
under $\hat{L}_{\geo}$ equals $(-\hat{\mu},\hat{\phi})$.
\end{proof}

The {\em kernel} $\ker(\mathcal{L}_{\geo})$ of $\mathcal{L}_{\geo}: \hat{\mathcal{D}}_{\geo}(E,\Pi) \to \hat{\mathcal{C}}(E,\Pi)$
is the normal full subgroupoid of $\hat{\mathcal{D}}_{\geo}(E,\Pi)$ whose hom-sets are given
by homomorphisms of $\hat{\mathcal{D}}_{\geo}(E,\Pi)$ that get mapped to Hamiltonian homotopies
which are equivalent to ones of the form $\id_{\mu}$ for $\mu$ some coisotropic section under $\simeq_{H}$ by $\mathcal{L}_{\geo}$.
Clearly all pure morphisms (see Definition \ref{def:pureclasses} in Subsection \ref{ss:BFV-groupoid})
of $\hat{\mathcal{D}}_{\geo}(E,\Pi)$ lie in the kernel of $\mathcal{L}_{\geo}$.
Proposition \ref{prop:kernel} in the following Subsection asserts that this is in fact all of $\ker(\mathcal{L}_{\geo})$. 

\subsection{An isomorphism of groupoids}\label{ss:isomorphism}

\begin{Proposition}\label{prop:kernel}
The kernel $\ker(\mathcal{L}_{\geo})$ of $\mathcal{L}_{\geo}: \hat{\mathcal{D}}_{\geo}(E,\Pi) \to \hat{\mathcal{C}}(E,\Pi)$
is exactly given by the class of pure morphisms of $\hat{\mathcal{D}}_{\geo}(E,\Pi)$.
\end{Proposition}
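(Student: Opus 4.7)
The proof splits into the two inclusions, and only the second one requires genuine work.

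For the easy direction, suppose $(\hat\alpha,\hat\psi)$ is a pure gauge homotopy. By definition $\hat\psi\in\underline{\Inn}_{\ge 1}(BFV(E))$, so Lemma \ref{lemma:kernel0} gives $\underline L(\hat\psi)=(\id_E)_{t\in[0,1]}$. Moreover, the uniqueness assertion in Lemma \ref{lemma:geometricMC} shows that the section $\mu_\beta$ associated to a geometric Maurer--Cartan element is invariant under the $\Inn_{\ge 1}(BFV(E))$-action, hence $L_{\geo}(\hat\alpha)$ is the constant path at $\mu_{\alpha_0}$. Thus $\hat L_{\geo}(\hat\alpha,\hat\psi)=\id_{\mu_{\alpha_0}}$ on the nose, so $[(\hat\alpha,\hat\psi)]\in\ker(\mathcal L_{\geo})$.

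For the converse, suppose $[(\hat\alpha,\hat\psi)]$ from $\alpha_0$ to $\alpha_1$ lies in $\ker(\mathcal L_{\geo})$. Then necessarily $L_{\geo}(\alpha_0)=L_{\geo}(\alpha_1)=:\mu$, and we are given an isotopy of Hamiltonian homotopies $(\hat{\tilde\mu},\hat\Phi)$ from $\hat L_{\geo}(\hat\alpha,\hat\psi)$ to $\id_\mu$. My plan is to construct an isotopy of gauge homotopies from $(\hat\alpha,\hat\psi)$ to a pure gauge homotopy, which would show that $[(\hat\alpha,\hat\psi)]$ has a pure representative.

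The construction proceeds by lifting $\hat\Phi$ to the BFV-level. Let $F:E\times[0,1]^2\to\mathbb R$ be the generating Hamiltonian of $\hat\Phi$ in the $t$-variable. Interpreting $F$ as a section of $\widetilde{BFV}^{(0,0)}(E)$ with an extra smooth dependence on $s$, Lemma \ref{lemma:integrability2} produces, for each $s$, a smooth one-parameter family $\hat\Xi(\cdot,s)$ of inner automorphisms with $\underline L(\hat\Xi(\cdot,s))=\hat\Phi(\cdot,s)$, depending smoothly on $s$. At $s=0$ the families $\hat\Xi(\cdot,0)$ and $\hat\psi$ have the same image under $\underline L$, so by Lemma \ref{lemma:kernel0} their ratio lies in $\underline{\Inn}_{\ge 1}(BFV(E))$. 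Using a cutoff in $s$ that interpolates between this ratio at $s=0$ and the identity at $s=1$, I extend the correction to a smooth $s$-family in $\underline{\Inn}_{\ge 1}(BFV(E))$ and compose with $\hat\Xi$ to obtain $\hat\Psi$ with $\hat\Psi(\cdot,0)=\hat\psi$ and $\underline L(\hat\Psi(\cdot,1))=\id$. Then I specify a smooth path $\hat\beta_{0,s}$ of geometric Maurer--Cartan elements with $\beta_{0,0}=\alpha_0$ and $L_{\geo}(\beta_{0,s})=\mu$ (this is possible because Theorem \ref{thm:geometricMCelements}(c) shows the fiber of $L_{\geo}$ over $\mu$ is an $\Inn_{\ge 1}$-orbit, which is smoothly path-connected), and set $\beta_{t,s}:=\Psi_{t,s}(\beta_{0,s})$. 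Geometricity of $\beta_{t,s}$ for all $(t,s)$ follows because $\hat\Psi(\cdot,s)$ differs from $\hat\Xi(\cdot,s)$ only by an $\underline{\Inn}_{\ge 1}$-factor and $\hat\Xi(\cdot,s)$ preserves $\mathcal D_{\geo}(E,\Pi)$ by the argument used in the proof of Theorem \ref{thm:equivalenceclasses}. Finally, $\underline L(\hat\Psi(\cdot,1))=\id$ and Lemma \ref{lemma:kernel0} force $\hat\Psi(\cdot,1)\in\underline{\Inn}_{\ge 1}(BFV(E))$, so the gauge homotopy at $s=1$ is pure.

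The main obstacle is the smooth bookkeeping around the boundary of the square $[0,1]^2$: conditions (a')--(d') in Definition \ref{def:isotopiesofgaugehomotopies} must all be verified, in particular the existence of a generator $\hat\gamma\in\widetilde{BFV}^0(E)$ producing $\hat\Psi$ parameter-wise in $s$, the $s$-constancy of the path along $t=1$, and the compatibility of the cutoff with geometricity of the intermediate Maurer--Cartan elements. The essential mechanism driving the argument is the splitting $\underline L\circ\underline R=\id$ together with Lemma \ref{lemma:kernel0}, which reduces every ambiguity in lifting $\underline{\Ham}(E,\Pi)$ to $\underline{\Inn}(BFV(E))$ to a correction in $\underline{\Inn}_{\ge 1}(BFV(E))$, that is, to a pure modification.
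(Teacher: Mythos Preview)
Your easy direction is fine and matches the paper's remark that pure morphisms obviously lie in the kernel.

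For the hard direction, your overall plan---lift the isotopy of Hamiltonian homotopies to the BFV-level and correct by $\underline{\Inn}_{\ge 1}$-factors---is the same as the paper's. But the step where you assert geometricity of $\beta_{t,s}$ is a genuine gap, not mere bookkeeping. You write that ``$\hat\Xi(\cdot,s)$ preserves $\mathcal D_{\geo}(E,\Pi)$ by the argument used in the proof of Theorem~\ref{thm:equivalenceclasses}.'' That is not what that proof shows. The lift $\underline R(\hat\Phi)$ of a family of Hamiltonian diffeomorphisms does \emph{not} in general send geometric Maurer--Cartan elements to geometric ones: the lowest component $\beta_0\in\Gamma(\mathcal E)$ gets transported by parallel transport along the Hamiltonian flow, and there is no reason the result should remain of the form $A(\Omega_0+p^*(\mu))$. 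The entire construction with $M_t$, $X_t$, $Y_t$, $Z_t$, $N_t$ in the proof of Theorem~\ref{thm:equivalenceclasses} exists precisely to manufacture an additional $\underline{\Inn}_{\ge 1}$-correction restoring geometricity; it is not a byproduct but the main content of that argument. Your cutoff-in-$s$ correction is chosen only to match $\hat\psi$ at $s=0$, so it carries no information about geometricity at intermediate $s$.

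The paper's proof handles this by first lifting the two-parameter family of coisotropic sections $\hat\Gamma$ to a two-parameter family $\hat\Theta$ of \emph{normalized} Maurer--Cartan elements, then building the $\underline{\Inn}_{\ge 1}$-correction $\Upsilon_{t,s}$ so that $(\Psi_{t,s}\circ\Upsilon_{t,s})\cdot\alpha_0$ agrees with $\eta\cdot\Theta_{t,s}$ in the $\Gamma(\mathcal E)$-component, forcing geometricity. A second, separate correction $\tau_{t,s}\in\underline{\Inn}_{\ge 2}$ is then needed to enforce the $s$-constancy at $t=1$ required by Definition~\ref{def:isotopiesofgaugehomotopies}(b'); you flag this as an obstacle but do not address it, and it does not come for free from your construction either. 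Only after both corrections are in place does the final $\underline{\Inn}_{\ge 1}$-adjustment via Lemma~\ref{lemma:kernel0} (your $\hat\zeta$) close the argument.
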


\begin{proof}
We have to show that the following implication holds: given an arbitrary gauge homotopy
$(\hat{\alpha},\hat{\phi})$ such that $\mathcal{L}_{\geo}(\hat{\alpha},\hat{\phi})\simeq_{H}\id_{\mu}$ 
for some coisotropic section $\mu$ of $(E,\Pi)$, then there is a {\em pure} gauge homotopy $(\hat{\beta},\hat{\psi})$
such that $(\hat{\alpha},\hat{\phi})\simeq_{G}(\hat{\beta},\hat{\psi})$ holds.

Let $(\hat{\gamma},\hat{\varphi})$ be a Hamiltonian homotopy that is isotopic to $\id_{\gamma_0}$. Hence there is an
isotopy of Hamiltonian homotopies $(\hat{\Gamma},\hat{\Phi})$ which starts at $(\hat{\gamma},\hat{\varphi})$
and ends at $\id_{\gamma_0}$. Moreover let $(\hat{\alpha},\hat{\phi})$ be a gauge homotopy
with $\mathcal{L}_{\geo}(\hat{\alpha},\hat{\phi})=(\hat{\gamma},\hat{\varphi})$.

In particular $\hat{\Gamma}$ is a section of $E_{[0,1]^{2}}$
such that $-\Gamma_{t,s}$ is a coisotropic section for arbitrary $t,s \in [0,1]$. Using the same lifting procedure
as described in the proof of Theorem \ref{thm:equivalenceclasses}, we obtain 
\begin{align*}
\hat{\Theta} \in \Gamma(\wedge \mathcal{E}_{[0,1]^{2}}\otimes \wedge \mathcal{E}^{*}_{[0,1]^{2}})
\end{align*}
such that $\Theta_{t,s}$ is a normalized Maurer--Cartan element satisfying $L_{\geo}(\Theta_{t,s})=\Gamma_{t,s}$
for all $t,s\in [0,1]$. Observe that $\Theta_{0,s}$ is constant in $s\in [0,1]$ and so is $\Theta_{1,s}$.
Furthermore lift $\hat{\Phi}$ to a smooth two-parameter family $\hat{\Psi}=\underline{R}(\hat{\Phi})$
of inner automorphism of $BFV(E)$.

Since the images of $\Theta_{0,0}$ and $\alpha_0$ under $L_{\geo}$ coincide, there is an element $\eta$ in $\Inn_{\ge 1}(BFV(E))$ such that
\begin{align*}
\alpha_0=\eta \cdot \Theta_{0,0}
\end{align*}
holds according to Theorem \ref{thm:geometricMCelements}.

Consider $\Sigma_{t,s}:=(\Psi_{t,s}^{-1}\circ \eta)\cdot \Theta_{t,s}$ which defines a smooth two-parameter family $\hat{\Sigma}$
of Maurer--Cartan elements. This family is in general not a family of geometric Maurer--Cartan elements.
However by definition $\Sigma_{0,s}=\alpha_0$ holds. We fix $s\in [0,1]$ and consider the smooth one-parameter family
of Maurer--Cartan elements $(\Sigma_{t,s})_{t\in [0,1]}$. Applying the gauging-procedure used in the proof of
Theorem \ref{thm:equivalenceclasses} one finds a smooth $(\Upsilon_{t,s})_{t\in [0,1]}\in \underline{\Inn}_{\ge 1}(BFV(E))$
such that the components of $\Upsilon_{t,s}\cdot \alpha_{0}$ and $\Sigma_{t,s}$ in $\Gamma(\mathcal{E}_{[0,1]})$ coincide.
Inspecting the construction of $\Upsilon_{t,s}$ reveals that it can be arranged 
such that the result is smooth with respect to $s\in [0,1]$ too. This is due to the  fact that $\Upsilon_{t,s}$
is constructed as the solution of some ordinary differential equation and as such depends smoothly on the input-data.

Because the components of $\Upsilon_{t,s}\cdot \alpha_{0}$ and $\Sigma_{t,s}=(\Psi_{t,s}^{-1}\circ \eta)\cdot \Theta_{t,s}$
in $\Gamma(\mathcal{E}_{[0,1]})$ coincide, so do the components of 
\begin{align*}
\Xi_{t,s}:=(\Psi_{t,s}\circ \Upsilon_{t,s})\cdot \alpha_0
\end{align*}
and $\eta\cdot \Theta_{t,s}$. The smooth two-parameter family of Maurer--Cartan elements
$\eta\cdot \Theta_{t,s}$ is geometric by construction, and consequently so is $\Xi_{t,s}$.

We constructed a smooth one-parameter family of inner automorphisms $\omega_{t,s}:=\Psi_{t,s}\circ \Upsilon_{t,s}$
and a smooth two-parameter family of geometric Maurer--Cartan elements $\Xi_{t,s}$ such that
\begin{align*}
\Xi_{t,s}=\omega_{t,s}\cdot \Xi_{0,s}
\end{align*}
holds for arbitrary $t,s\in [0,1]$. However this does not yield an isotopy of gauge homotopies since
$\Xi_{1,s}=(\Psi_{1,s}\circ \Upsilon_{1,s})\cdot \alpha_0$ which is not constant in $s\in [0,1]$.

By construction the component of the family
\begin{align*}
\Xi_{1,s}=\omega_{1,s}\cdot \Xi_{0,s}
\end{align*}
in $\Gamma(\mathcal{E}_{[0,1]})$ is equal to the component of $\eta\cdot \Theta_{1,s}$ which is constant in $s$.
So $\eta^{-1}\cdot \Theta_{1,s}$ is a family of normalized Maurer--Cartan elements with constant image under $L_{\nor}$.
For fixed $s\in [0,1]$ there is a smooth one-parameter family of automorphisms $\tau_{t,s}$ in $\Inn_{\ge 2}(BFV(E))$
such that $\tau_{1,s}\cdot (\eta^{-1}\cdot \Xi_{1,s})=\eta^{-1}\Xi_{1,0}$
due to Theorem \ref{thm:normalizedMCelements} in Subsection \ref{ss:normalizedMC-elements}. Inspecting the proof
of Theorem \ref{thm:normalizedMCelements} given in \cite{Schaetz} shows that $(\tau_{t,s})$ can be constructed such that
it is smooth also with respect to the parameter $s$.
Consequently
\begin{align*}
((\eta\circ \tau_{t,s}\circ \eta^{-1} \circ \omega_{t,s})\cdot \Xi_{0,s},\eta\circ \tau_{t,s}\circ \eta^{-1} \circ \omega_{t,s})
\end{align*}
is an isotopy of gauge homotopies.

The final step is to observe that the images of $\hat{\phi}$ and of $(\eta\circ \tau_{t,0}\circ \eta^{-1} \circ \omega_{t,0})$
under $\underline{L}$ coincide. Lemma \ref{lemma:kernel0} implies that there is a unique $\hat{\zeta}\in \underline{\Inn}_{\ge 1}(BFV(E))$
such that
\begin{align*}
\zeta_t=\phi_t \circ (\eta\circ \tau_{t,0}\circ \eta^{-1} \circ \omega_{t,0})^{-1}
\end{align*}
holds for arbitrary $t\in [0,1]$. This implies
\begin{align*}
\phi_t = \zeta_t \circ \eta\circ \tau_{t,0}\circ \eta^{-1} \circ \omega_{t,0}
\end{align*}
and consequently
\begin{align*}
\left((\zeta_t\circ \eta\circ \tau_{t,0}\circ \eta^{-1} \circ \omega_{t,0})\cdot \alpha_0,
(\zeta_t \circ \eta\circ \tau_{t,0}\circ \eta^{-1} \circ \omega_{t,0})\right)
\end{align*}
is an isotopy of gauge homotopies from $(\hat{\alpha},\hat{\psi})$ to $(\zeta_t\circ \eta\circ \tau_{t,1}\circ \eta^{-1})$.
The latter gauge homotopy is pure.
\end{proof}

Proposition \ref{prop:kernel} implies that the morphism 
\begin{align*}
\mathcal{L}_{\geo}: \hat{\mathcal{D}}_{\geo}(E,\Pi) \twoheadrightarrow \hat{\mathcal{C}}(E,\Pi)
\end{align*}
factors through the natural projection $\hat{\mathcal{D}}_{\geo}(E,\Pi)\twoheadrightarrow \hat{\mathcal{D}}(E,\Pi)$ and induces
an isomorphism of groupoids
\begin{align*}
\mathcal{L}: \hat{\mathcal{D}}(E,\Pi) \xrightarrow{\cong} \hat{\mathcal{C}}(E,\Pi).
\end{align*}
We conclude with the following ``categorification'' of Theorem \ref{thm:equivalenceclasses} Subsection \ref{ss:isomorphism_moduli}

\begin{Theorem}\label{thm:main}
Let $(E,\Pi)$ be a coisotropic vector bundle. 
Then the morphism of groupoids
\begin{align*}
\mathcal{L}_{\geo}: \hat{\mathcal{D}}_{\geo}(E,\Pi) \twoheadrightarrow \hat{\mathcal{C}}(E,\Pi)
\end{align*}
introduced in Lemma \ref{lemma:morphismofgroupoids}
induces an isomorphism
\begin{align*}
\mathcal{L}: \hat{\mathcal{D}}(E,\Pi) \xrightarrow{\cong} \hat{\mathcal{C}}(E,\Pi)
\end{align*}
between the BFV-groupoid $\hat{\mathcal{D}}(E,\Pi)$ (Definition \ref{def:BFV-groupoid} in Subsection \ref{ss:BFV-groupoid}) and
the groupoid of coisotropic sections $\hat{\mathcal{C}}(E,\Pi)$
(Definition \ref{def:groupoidofcoisotropicsections} in Subsection \ref{ss:groupoidofcoisotropicsections}).
\end{Theorem}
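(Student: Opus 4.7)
The plan is to observe that once Proposition \ref{prop:kernel} and Lemma \ref{lemma:surjectivity} are in hand, the theorem reduces to the formal statement that a surjective functor between groupoids whose kernel coincides with a full normal subgroupoid descends to an isomorphism from the quotient. By Definition \ref{def:BFV-groupoid} the groupoid $\hat{\mathcal{D}}(E,\Pi)$ is the quotient of $\hat{\mathcal{D}}_{\geo}(E,\Pi)$ by exactly the class of pure morphisms, and Proposition \ref{prop:kernel} identifies this class with $\ker(\mathcal{L}_{\geo})$. So the outline consists of three short verifications.

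First I would check that $\mathcal{L}_{\geo}$ descends to a well-defined functor $\mathcal{L}$ on $\hat{\mathcal{D}}(E,\Pi)$ in the sense of Definition \ref{def:quotientgroupoid}. On objects: if there is a pure morphism between $X,Y \in \mathcal{D}_{\geo}(E,\Pi)$, then by Lemma \ref{lemma:objectsofBFV-groupoid} they lie in the same orbit of $\Inn_{\ge 1}(BFV(E))$, and Theorem \ref{thm:geometricMCelements}(b) gives $L_{\geo}(X) = L_{\geo}(Y)$. On morphisms: if $\alpha : X \to Y$ and $\beta : W \to Z$ are related in $\hat{\mathcal{D}}_{\geo}(E,\Pi)$ by $g \circ \alpha = \beta \circ f$ with $f,g$ pure, applying the functor $\mathcal{L}_{\geo}$ (Lemma \ref{lemma:morphismofgroupoids}) and using Proposition \ref{prop:kernel} to replace $\mathcal{L}_{\geo}(f)$ and $\mathcal{L}_{\geo}(g)$ by identity Hamiltonian homotopies up to $\simeq_H$ yields $\mathcal{L}_{\geo}(\alpha) \simeq_H \mathcal{L}_{\geo}(\beta)$, so they represent the same morphism of $\hat{\mathcal{C}}(E,\Pi)$.

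Next I would establish that $\mathcal{L}$ is a bijection on objects and on every hom-set. Surjectivity in both cases is Lemma \ref{lemma:surjectivity}. For injectivity on objects, Lemma \ref{lemma:objectsofBFV-groupoid} identifies the objects of $\hat{\mathcal{D}}(E,\Pi)$ with $\mathcal{D}_{\geo}(E,\Pi)/\Inn_{\ge 1}(BFV(E))$, and Theorem \ref{thm:geometricMCelements}(c) tells us $[L_{\geo}]$ is already a bijection onto $\mathcal{C}(E,\Pi)$. For faithfulness on morphisms, suppose $\alpha: X \to Y$ and $\beta : W \to Z$ in $\hat{\mathcal{D}}_{\geo}(E,\Pi)$ satisfy $\mathcal{L}_{\geo}(\alpha) \simeq_H \mathcal{L}_{\geo}(\beta)$. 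Matching endpoints forces $L_{\geo}(X)=L_{\geo}(W)$ and $L_{\geo}(Y)=L_{\geo}(Z)$, so by Lemma \ref{lemma:objectsofBFV-groupoid} a pure morphism $f: X \to W$ exists. Define
\begin{align*}
g := \beta \circ f \circ \alpha^{-1} : Y \to Z.
\end{align*}
Then $\mathcal{L}_{\geo}(g)$ is $\simeq_H$-equivalent to $\mathcal{L}_{\geo}(\beta) \circ \mathcal{L}_{\geo}(\alpha)^{-1} \simeq_H \id_{L_{\geo}(Y)}$, so $g$ lies in $\ker(\mathcal{L}_{\geo})$ and hence is pure by Proposition \ref{prop:kernel}. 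The identity $g \circ \alpha = \beta \circ f$ then shows $[\alpha]=[\beta]$ in $\hat{\mathcal{D}}(E,\Pi)$.

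The substantive difficulty of the theorem is entirely absorbed into Proposition \ref{prop:kernel}, whose proof is the technical heart of the section; in comparison, the remaining assembly is bookkeeping in groupoid theory. I therefore do not foresee further analytic obstructions at this stage. A small point worth noting carefully is that one must use Proposition \ref{prop:kernel} in both directions: to confirm that $\mathcal{L}$ is well defined on the quotient (pure morphisms lie in the kernel, which is the easy inclusion) and, more crucially, to convert the constructed $g$ from being in $\ker(\mathcal{L}_{\geo})$ back into a pure morphism. Once that is done the induced $\mathcal{L}: \hat{\mathcal{D}}(E,\Pi) \xrightarrow{\cong} \hat{\mathcal{C}}(E,\Pi)$ is visibly a bijection on objects and fully faithful, hence an isomorphism of groupoids.
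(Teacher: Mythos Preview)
Your proposal is correct and follows essentially the same approach as the paper: the paper's proof of Theorem \ref{thm:main} is the short paragraph preceding it, which simply asserts that Proposition \ref{prop:kernel} forces $\mathcal{L}_{\geo}$ to factor through $\hat{\mathcal{D}}(E,\Pi)$ and induce an isomorphism. You have spelled out the groupoid-theoretic bookkeeping (well-definedness on the quotient, bijectivity on objects via Theorem \ref{thm:geometricMCelements}(c), and full faithfulness via Lemma \ref{lemma:surjectivity} and Proposition \ref{prop:kernel}) that the paper leaves implicit.
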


\thebibliography{AAAA}

\bibitem[BF]{BatalinFradkin}
I.A. Batalin, E.S. Fradkin,
{\em A generalized canonical formalism and quantization of reducible gauge theories},  Phys. Lett.  {\bf 122B}  (1983),  157--164

\bibitem[BV]{BatalinVilkovisky}
I.A. Batalin, G.S. Vilkovisky,
{\em Relativistic S-matrix of dynamical systems with bosons and fermion constraints}, Phys. Lett. {\bf 69B} (1977), 309--312

\bibitem[B]{Bordemann}
M. Bordemann,
{\em The deformation quantization of certain super-Poisson brackets and BRST cohomology},
\texttt{math.QA/0003218}

\bibitem[CF]{CattaneoFelder}
A.S. Cattaneo, G. Felder,
{\em Relative formality theorem and quantisation of coisotropic submanifolds}, Adv. Math. 208, 521--548 (2007)

\bibitem[F]{Floer}
A. Floer,
{\em Morse theory for Lagrangian intersections}, J. Differential Geom. {\em 28} (1988), no.3, 513--547

\bibitem[FOOO]{FOOO}
K. Fukaya, Y.-G. Oh, H. Ohta, , K. Ono,
{\em Lagrangian intersection Floer theory -- anomaly and obstruction.}, preprint

\bibitem[He]{Herbig}
H.-C. Herbig,
{\em Variations on homological Reduction}, Ph.D. Thesis (University of Frankfurt),
\texttt{arXiv:0708.3598}

\bibitem[MS]{McDuffSalamon}
D. McDuff, D. Salamon,
{\em Introduction to symplectic topology}, Second Edition, Oxford Mathematical Monographs.
The Clarendon Press, Oxford University Press, New York (1998)

\bibitem[OP]{OhPark}
Y.G. Oh, J.S. Park, 
{\em Deformations of coisotropic submanifolds and strong homotopy Lie algebroids},
Invent. Math. {\bf 161}, 287--360 (2005)

\bibitem[Sch1]{Schaetz}
F. Sch\"atz,
{\em BFV-complex and higher homotopy structures}, Commun. Math. Phys. 286 (2009), Issue 2, 399--443

\bibitem[Sch2]{Schaetz2}
F. Sch\"atz,
{\em Invariance of the BFV-complex}, submitted for publication, available as \texttt{arXiv:0812.2357}

\bibitem[St]{Stasheff}
J. Stasheff,
{\em Homological reduction of constrained Poisson algebras}, J. Diff. Geom. {\bf 45} (1997), 221--240

\bibitem[W1]{WeinsteinD}
A. Weinstein,
{\em Symplectic manifolds and their Lagrangian submanifolds},
Adv. Math. {\bf 6} (1971), 329--346

\bibitem[W2]{Weinstein}
A. Weinstein,
{\em Coisotropic calculus and Poisson groupoids},
J. Math. Soc. Japan {\bf 40} (1988), 705--727

\bibitem[Z]{Zambon}
M. Zambon,
{\em Averaging techniques in Riemannian, symplectic and contact geometry}, Ph.D. Thesis (University of Berkeley)

\end{document}